\renewcommand{\baselinestretch}{1.08}
\renewcommand{\thefootnote}{\fnsymbol{footnote}}	
\newcommand\DateFootnote{
\begingroup
\renewcommand\thefootnote{}
\footnote{27$^{\text{th}}$ July, 2007. Revised: \today}
\setcounter{footnote}{0}
\vspace*{-3ex}
\endgroup}
\renewcommand\section{\@startsection {section}{1}{\z@}%
                                   {-3ex \@plus -1ex \@minus -.2ex}%
                                   {2ex \@plus.2ex}%
                                   {\normalfont\large\bfseries}}
\renewcommand\subsection{\@startsection{subsection}{2}{\z@}%
                                     {-2.5ex\@plus -1ex \@minus -.2ex}%
                                     {1.5ex \@plus .2ex}%
                                     {\normalfont\normalsize\bfseries}}
\renewcommand\subsubsection{\@startsection{subsubsection}{3}{\z@}%
                                     {-2ex\@plus -1ex \@minus -.2ex}%
                                     {1ex \@plus .2ex}%
                                     {\normalfont\normalsize\bfseries}}
 \renewcommand\paragraph{\@startsection{paragraph}{4}{\z@}%
                                    {1.5ex \@plus.5ex \@minus.2ex}%
                                    {-1em}%
                                    {\normalfont\normalsize\bfseries}}
\renewcommand\subparagraph{\@startsection{subparagraph}{5}{\parindent}%
                                       {1.5ex \@plus.5ex \@minus .2ex}%
                                       {-1em}%
                                      {\normalfont\normalsize\bfseries}}
\newcommand{\arXiv}[1]{arXiv:\,\href{http://arxiv.org/abs/#1}{#1}}
\newcommand{\msn}[1]{MR:\,\href{http://www.ams.org/mathscinet-getitem?mr=MR#1}{#1}}
\newcommand{\doi}[1]{doi:\,\href{http://dx.doi.org/#1}{#1}}
\theoremstyle{plain}
\newtheorem{thm}{Theorem}
\newtheorem{lem}[thm]{Lemma}
\newtheorem{conj}[thm]{Conjecture}
\newtheorem{cor}[thm]{Corollary}
\newtheorem{prop}[thm]{Proposition}
\newcommand{\seclabel}[1]{\label{sec:#1}}
\newcommand{\secref}[1]{\mbox{Section~\ref{sec:#1}}}
\newcommand{\lemlabel}[1]{\label{lem:#1}}
\newcommand{\lemref}[1]{Lemma~\ref{lem:#1}}
\newcommand{\twolemref}[2]{Lemmas~\ref{lem:#1} and~\ref{lem:#2}}
\newcommand{\thmlabel}[1]{\label{thm:#1}}
\newcommand{\thmref}[1]{Theorem~\ref{thm:#1}}
\newcommand{\corlabel}[1]{\label{cor:#1}}
\newcommand{\corref}[1]{Corollary~\ref{cor:#1}}
\newcommand{\figlabel}[1]{\label{fig:#1}}
\newcommand{\figref}[1]{\mbox{Figure~\ref{fig:#1}}}
\newcommand{\eqnlabel}[1]{\label{eqn:#1}}
\newcommand{\eqnref}[1]{\eqref{eqn:#1}}
\newcommand{\twofigref}[2]{Figures~\ref{fig:#1} and \ref{fig:#2}}
\newcommand{\twosecref}[2]{Sections~\ref{sec:#1} and \ref{sec:#2}}
\newcommand{\tablabel}[1]{\label{tab:#1}}
\newcommand{\tabref}[1]{Table~\ref{tab:#1}}
\newcommand{\F}{\ensuremath{\mathcal{F}}}
\newcommand{\qn}[2][]{\ensuremath{\textup{\textsf{qn}}_{#1}(#2)}}
\newcommand{\sn}[2][]{\ensuremath{\textup{\textsf{sn}}_{#1}(#2)}}
\newcommand{\ctn}[1]{\textup{\ensuremath{\textsf{cat}(#1)}}}
\newcommand{\stack}[1]{\ensuremath{\textup{\textsf{stack}}(#1)}}
\newcommand{\queue}[1]{\ensuremath{\textup{\textsf{queue}}(#1)}}
\newcommand{\CEIL}[1]{\ensuremath{\protect\left\lceil#1\right\rceil}}
\newcommand{\HALF}{\ensuremath{\protect\dfrac{1}{2}}}
\newcommand{\quarter}{\ensuremath{\protect\tfrac{1}{4}}}
\newcommand{\bracket}[1]{\ensuremath{\protect\left(#1\right)}}
\newcommand{\aaa}{\textup{(a)}}
\newcommand{\bbb}{\textup{(b)}}
\newcommand{\ccc}{\textup{(c)}}
\newcommand{\Figure}[4][!htb]{
\begin{figure}[#1]
\begin{center}
#3
\caption{\figlabel{#2}#4}
\end{center}
\end{figure}}
\newcommand{\old}[1]{\textcolor{red}{OLD #1}}
\renewcommand{\old}[1]{}
\newcommand{\Oh}[1]{\ensuremath{\protect\mathcal{O}(#1)}}
\newcommand{\etal}{~et al.~}
\newcommand{\N}{\mathbb{N}}
\newcommand{\FLOOR}[1]{\ensuremath{\protect\left\lfloor#1\right\rfloor}}
\newcommand{\ceil}[1]{\lceil{#1}\rceil}
\newcommand{\floor}[1]{\lfloor{#1}\rfloor}
\newcommand{\half}{\ensuremath{\protect\tfrac{1}{2}}}
\renewcommand{\geq}{\geqslant}
\renewcommand{\leq}{\leqslant}
\DeclareMathOperator{\dist}{dist}
\begin{document}

\vspace*{2ex}
{\Large\bfseries\boldmath\scshape Thickness and Antithickness of Graphs}\footnotemark[1]

\DateFootnote

\medskip
\bigskip
{\large 
Vida Dujmovi{\'c}\,\footnotemark[2]
\quad
David~R.~Wood\,\footnotemark[4]
}

\bigskip
\bigskip
\emph{Abstract.} 
This paper studies questions about duality between crossings and non-crossings in graph drawings via the notions of thickness and antithickness. The \emph{thickness} of a graph $G$ is the minimum integer $k$ such that in some drawing of $G$, the edges can be partitioned into $k$ noncrossing subgraphs. The \emph{antithickness} of a graph $G$ is the minimum integer $k$ such that in some drawing of $G$, the edges can be partitioned into $k$ thrackles, where a \emph{thrackle} is a set of edges, each pair of which intersect exactly once. (Here edges with a common endvertex $v$ are considered to intersect at $v$.)\ So thickness is a measure of how close a graph is to being planar, whereas antithickness is a measure of how close a graph is to being a thrackle. This paper explores the relationship between the thickness and antithickness of a graph, under various graph drawing models, with an emphasis on extremal questions. 

%

\footnotetext[2]{School of Computer Science and Electrical Engineering,  
University of Ottawa, Ottawa, Canada (\texttt{vida.dujmovic@uottawa.ca}). Research  supported by NSERC and the Ontario Ministry of Research and Innovation.}

\footnotetext[4]{School of Mathematical Sciences, Monash   University, Melbourne, Australia  (\texttt{david.wood@monash.edu}). }



\renewcommand{\thefootnote}{\arabic{footnote}}
\setlength{\parskip}{2ex}

\section{Introduction}\seclabel{Introduction}

This paper studies questions about duality between crossings and
non-crossings in graph drawings. This idea is best illustrated by an
example. A graph is \emph{planar} if it has a drawing with no
crossings, while a \emph{thrackle} is a graph drawing in which every
pair of edges intersect exactly once. So in some sense, thrackles
are the graph drawings with the most crossings. Yet thracklehood and
planarity appear to be related. In particular, a widely believed
conjecture would imply that every thrackleable graph is planar. Loosely
speaking, this says that a graph that can be drawn with the maximum
number of crossings has another drawing with no crossings. This paper
explores this seemingly counterintuitive idea through the notions of
thickness and antithickness. First we introduce the
essential definitions.

A (\emph{topological}) \emph{drawing} of a graph\footnote{We consider
  undirected, finite, simple graphs $G$ with vertex set $V(G)$ and
  edge set $E(G)$. The number of vertices and edges of $G$ are
  respectively denoted by $n=|V(G)|$ and $m=|E(G)|$. Let $G[S]$ denote
  the subgraph of $G$ induced by a set of vertices $S\subseteq
  V(G)$. Let $G- S:=G[V(G)\setminus S]$ and $G- v:=G\setminus\{v\}$. 
} $G$ is a function that
maps each vertex of $G$ to a distinct point in the plane, and maps
each edge of $G$ to a simple closed curve between the images of its
end-vertices, such that:
\begin{itemize}
\item the only vertex images that an edge image intersects are the
  images of its own end-vertices (that is, an edge does not `pass
  through' a vertex),
\item the images of two edges are not tangential at a common interior
  point (that is, edges cross `properly').
\end{itemize}
Where there is no confusion we henceforth do not distinguish between a
graph element and its image in a drawing. Two edges with a common
end-vertex are \emph{adjacent}. Two edges in a drawing \emph{cross} if
they intersect at some point other than a common end-vertex. Two edges
that do not intersect in a drawing are \emph{disjoint}. A drawing of a
graph is \emph{noncrossing} if no two edges cross. A graph is
\emph{planar} if it has a noncrossing drawing.

In the 1960s John Conway introduced the following definition. 
A drawing of a graph is a \emph{thrackle} if every pair of edges
intersect exactly once (either at a common endvertex or at a crossing point). A graph is \emph{thrackeable} if it has a drawing that is a
thrackle; see \citep{MN18,GX17,PP08,Cottingham93,RST16,CKN15,CN-DCG00,GR95,LPS-DCG97,PRS-DM94,CMN04,CK01,PS11,CN10,PRT12,FP11,AS17}. Note that in this definition, it is important that every pair of edges
intersect \emph{exactly} once since every graph has a drawing in which
every pair of edges intersect at least once\footnote{\emph{Proof}: Let
  $V(G)=\{v_1,\dots,v_n\}$. Position each vertex $v_i$ at
  $(i,0)$. Define a relation $\prec$ on $E(G)$ where $v_iv_j\prec
  v_pv_q$ if and only if $i<j\leq p<q$. Observe that $\preceq$ is a
  partial order of $E(G)$. Let $E(G)=\{e_1,\dots,e_m\}$, where
  $e_i\prec e_j$ implies that $j<i$. Draw each edge $e_i=v_pv_q$ as
  the 1-bend polyline $(p,0)(i,1)(q,0)$. Then every pair of edges
  intersect at least once.}.

A drawing is \emph{geometric} if every edge is a straight line
segment. A geometric drawing is \emph{convex} if every vertex is on
the convex hull of the set of vertices. A \emph{$2$-track drawing} is
a convex drawing of a bipartite graph in which the two colour classes
are separated in the ordering of the vertices around the convex
hull. For the purposes of this paper, we can assume that the two
colour classes in a $2$-track drawing are on two parallel lines
(called \emph{tracks}). The notion of a \emph{convex thrackle} is
closely related to that of \emph{outerplanar thrackle}, which was
independently introduced by \citet{CN10}.

\subsection{Thickness and Antithickness}\seclabel{Thickness}

The \emph{thickness} of a graph $G$ is the minimum $k\in\N$ such that
the edge set $E(G)$ can be partitioned into $k$ planar subgraphs. Thickness is
a widely studied parameter; see the surveys \citep{Hobbs69, MOS98}. 
The \emph{thickness} of a graph drawing is the minimum $k\in\N$ such that the edges of the drawing can
be partitioned into $k$ noncrossing subgraphs. Equivalently, each edge
is assigned one of $k$ colours such that crossing edges receive
distinct colours.

Every planar graph can be drawn with its vertices at prespecified
locations \citep{Kainen73,Halton91,PW-GC01}. It follows that a graph
has thickness $k$ if and only if it has a drawing with thickness $k$
\citep{Kainen73,Halton91}. However, in such a representation the edges
might be highly curved\footnote{In fact, \citet{PW-GC01} proved that
  for every planar graph $G$ that contains a matching of $n$ edges, if
  the vertices of $G$ are randomly assigned prespecified locations on
  a circle, then $\Omega(n)$ edges of $G$ have $\Omega(n)$ bends in
  every polyline drawing of $G$.}. The minimum integer $k$ such that a
graph $G$ has a geometric / convex / $2$-track drawing with thickness
$k$ is called the \emph{geometric} / \emph{book} / \emph{$2$-track thickness} of $G$. Book thickness is also called \emph{pagenumber} and \emph{stacknumber} in the literature; see the surveys \citep{Bilski-IE92,DujWoo04}\footnote{In the context of this paper it would make sense  to refer to book thickness as \emph{convex thickness}, and to refer to thickness as \emph{topological thickness}, although we refrain from the temptation of introducing further terminology. }

The following results
are well known for every graph $G$:
\begin{itemize}
\item $G$ has geometric thickness $1$ if and only if $G$ is planar
  \citep{Fary48, Wagner36}.
\item $G$ has book thickness $1$ if and only if $G$ is outerplanar
  \citep{Kainen73}.
\item $G$ has book thickness at most $2$ if and only if $G$ is a
  subgraph of a Hamiltonian planar graph \citep{Kainen73}.
\item $G$ has $2$-track thickness $1$ if and only if $G$ is a forest
  of caterpillars \citep{HS72}.
\end{itemize}

The \emph{antithickness} of a graph $G$ is the minimum $k\in\N$ such
that $E(G)$ can be partitioned into $k$ thrackeable subgraphs. The
\emph{antithickness} of a graph drawing is the minimum $k\in\N$ such
that the edges of the drawing can be partitioned into $k$
thrackles. Equivalently, each edge is assigned one of $k$ colours such
that disjoint edges receive distinct colours. The minimum $k\in\N$
such that a graph $G$ has a topological / geometric / convex / 2-track
drawing with antithickness $k$ is called the \emph{topological} /
\emph{geometric} / \emph{convex} / \emph{2-track antithickness} of
$G$. Thus a graph is thrackeable if and only if it has antithickness
$1$.


\begin{lem}
  Every thrackeable graph $G$ has a thrackled drawing with each vertex
  at a prespecified location.
\end{lem}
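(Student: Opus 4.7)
My plan is to exploit the fact that being a thrackle is a purely topological property: the condition ``every pair of edges intersects exactly once'' is preserved by every self-homeomorphism of the plane, since such a homeomorphism is a bijection sending simple curves to simple curves and preserves intersection multiplicities. So it suffices to find a homeomorphism of $\mathbb{R}^2$ that carries the vertex set of some known thrackled drawing of $G$ onto the prespecified vertex set, and then to transport the whole drawing through it.

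First I would fix any thrackled drawing of $G$, giving vertex positions $p_1,\dots,p_n\in\mathbb{R}^2$, and let $q_1,\dots,q_n$ be the (distinct) prespecified target positions. I would then build a homeomorphism $\phi:\mathbb{R}^2\to\mathbb{R}^2$ with $\phi(p_i)=q_i$ one vertex at a time. Suppose $p_1,\dots,p_{i-1}$ have already been moved into place. To move $p_i$ to $q_i$, choose a simple polyline from the current location of $p_i$ to $q_i$ that avoids the finitely many points that must remain fixed, namely $q_1,\dots,q_{i-1}$ together with the current locations of $p_{i+1},\dots,p_n$. Such a polyline exists because $\mathbb{R}^2$ minus finitely many points is path connected. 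This motion can be realised as an ambient isotopy of $\mathbb{R}^2$ supported in a thin tubular neighbourhood of the polyline that misses all of the fixed points. Composing these $n$ ambient isotopies produces the required $\phi$.

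Finally I would apply $\phi$ to every edge of the original thrackled drawing. The image of each edge is a simple curve joining $q_i$ to $q_j$ whenever the original edge joined $p_i$ to $p_j$, and each pair of image edges still meets exactly once because $\phi$ is a bijection that preserves intersection multiplicities. Hence the image is a thrackled drawing of $G$ with vertices at the prespecified locations.

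The only genuinely nontrivial step is the topological construction of $\phi$, but this is standard and is essentially the same mechanism that underlies the Kainen/Halton/Pach--Wenger theorem for planar graphs cited earlier; no combinatorics specific to thrackles is needed beyond the invariance of the thrackle property under homeomorphism.
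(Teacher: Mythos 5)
Your argument is correct, but it takes a genuinely different route from the paper. The paper planarises the given thrackled drawing by replacing each crossing point with a dummy vertex, and then invokes the cited theorem that every planar graph can be drawn (with curved edges) with its vertices at arbitrary prespecified locations; reading the resulting noncrossing drawing of the planarisation back as a drawing of $G$ gives the thrackle at the desired positions. You instead use the homogeneity of the plane: build a self-homeomorphism of $\mathbb{R}^2$ carrying the old vertex positions to the new ones and push the whole drawing through it, noting that ``every pair of edges meets exactly once'' and ``crossings are proper (non-tangential)'' are topological conditions, hence invariant. Your route is more self-contained -- it needs no fixed-location planar drawing theorem, only the standard fact that finitely many points of the plane can be moved to finitely many prescribed points by an ambient isotopy -- and it preserves the entire crossing pattern of the original drawing, whereas the paper's reduction deliberately reuses the same machinery it already cites for thickness. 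One small patch you should make: in your inductive step the polyline from the current position of $p_i$ to $q_i$ must avoid the current positions of $p_{i+1},\dots,p_n$, but $q_i$ itself may be occupied by such a point (e.g.\ when two vertices must swap places), in which case no such polyline exists. This is easily repaired by a preliminary isotopy moving all current vertex positions off the finite set $\{q_1,\dots,q_n\}$ before you begin, or by clearing the occupied target first; with that adjustment the construction of $\phi$ and hence the proof goes through.
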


\begin{proof}
  Consider a thrackled drawing of $G$. Replace each crossing point by a dummy
  vertex. Let $H$ be the planar graph obtained. Let $p(v)$ be a
  distinct prespecified point in the plane for each vertex $v$ of
  $G$. For each vertex $x\in V(H)-V(G)$ choose a distinct point
  $p(x)\in \mathbb{R}^2\setminus\{p(v):v\in V(G)\}$. Every planar graph can be drawn with its vertices at
  prespecified locations \citep{Kainen73,Halton91,PW-GC01}. Thus $H$
  can be drawn planar with each vertex $x$ of $H$ at $p(x)$. This
  drawing defines a thrackled drawing of $G$ with each vertex $v$ of
  $G$ at $p(v)$, as desired.
\end{proof}

\begin{cor}
\corlabel{AntithicknessCharacterisation}
  A graph has antithickness $k$ if and only if it has a drawing with
  antithickness $k$.
\end{cor}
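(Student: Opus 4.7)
\emph{Proof plan.} The proof has two directions, and both should follow fairly directly once the preceding \lemref{} (every thrackleable graph admits a thrackled drawing with vertices at prespecified points) is in hand.

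For the easy direction, suppose $G$ admits a drawing $D$ with antithickness $k$. By definition of drawing antithickness, the edges of $D$ partition into $k$ thrackles $T_1,\dots,T_k$. Each $T_i$, viewed as a drawing on its own, witnesses that the subgraph of $G$ it induces is thrackleable. Hence $E(G)$ partitions into $k$ thrackleable subgraphs, so $G$ has antithickness at most $k$.

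For the other direction, suppose $G$ has antithickness $k$, so $E(G)$ partitions into thrackleable subgraphs $G_1,\dots,G_k$ (on the same vertex set $V(G)$). Choose once and for all a distinct point $p(v)\in\mathbb{R}^2$ for each $v\in V(G)$. Apply the previous \lemref{} to each $G_i$ separately: each $G_i$ has a thrackled drawing $D_i$ with the vertex of $G_i$ corresponding to $v$ placed at $p(v)$. Overlay the $D_i$ onto a single drawing $D$ of $G$ by keeping vertices at their common positions and taking the edge curve of each $e\in E(G_i)$ from $D_i$. Then each colour class $E(G_i)$ of $D$ is, by construction, a thrackle, so $D$ has antithickness at most $k$.

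The only cautionary point is that edges belonging to different $G_i$'s could by coincidence produce tangencies or shared arcs in $D$; but since antithickness is assessed independently within each colour class, we may freely apply a tiny generic perturbation of the interiors of the edges of $D_2,\dots,D_k$ (fixing endpoints at the chosen $p(v)$) to turn all inter-class incidences into proper crossings without affecting intra-class intersections. Combining the two inequalities gives equality, and hence \corref{AntithicknessCharacterisation}.
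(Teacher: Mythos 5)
Your proposal is correct and is essentially the argument the paper intends: the corollary is deduced from the preceding lemma by fixing one set of vertex locations, thrackle-drawing each subgraph of the antithickness partition at those locations, and overlaying, with the converse direction being immediate from the definitions. Your extra remark about perturbing away tangencies between edges of different colour classes is a reasonable general-position point that the paper leaves implicit.
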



Every graph $G$ satisfies
\begin{align*}
  & \text{thickness}(G) \;\leq\;\text{geometric thickness}(G)
  \;\leq\;\text{book thickness}(G)\enspace,\text{ and}\\
  & \text{antithickness}(G) \;\leq\;\text{geometric antithickness}(G)
  \;\leq\;\text{convex antithickness}(G)\enspace.
\end{align*}
Moreover, if $G$ is bipartite, then
\begin{align*}
  & \text{book thickness}(G)
  \;\leq\;
  \text{2-track thickness}(G)
  \;=\;
  \text{2-track antithickness}(G)
  \enspace,\text{ and }\\
  & \text{convex antithickness}(G)
  \;\leq\;
  \text{2-track thickness}(G)
  \;=\;
  \text{2-track antithickness}(G)
  \enspace.
\end{align*}
For the final equality, observe that a $2$-track layout of $G$ with
antithickness $k$ is obtained from a $2$-track layout of $G$ with
thickness $k$ by simply reversing one track, and vice versa.

\subsection{An Example: Trees}\seclabel{Trees}

Consider the thickness of a tree. Every tree is planar, and thus has
thickness $1$ and geometric thickness $1$. It is well known that every
tree $T$ has $2$-track thickness at most $2$. \emph{Proof:} Orient the
edges away from some vertex $r$. Properly $2$-colour the vertices of
$T$ \emph{black} and \emph{white}. Place each colour class on its own
track, ordered according to a breadth-first search of $T$ starting at
$r$. Colour each edge according to whether it is oriented from a black
to a white vertex, or from a white to a black vertex. It is easily
seen that no two monochromatic edges cross, as illustrated in
\figref{TreeTrack}.

\Figure{TreeTrack}{\includegraphics{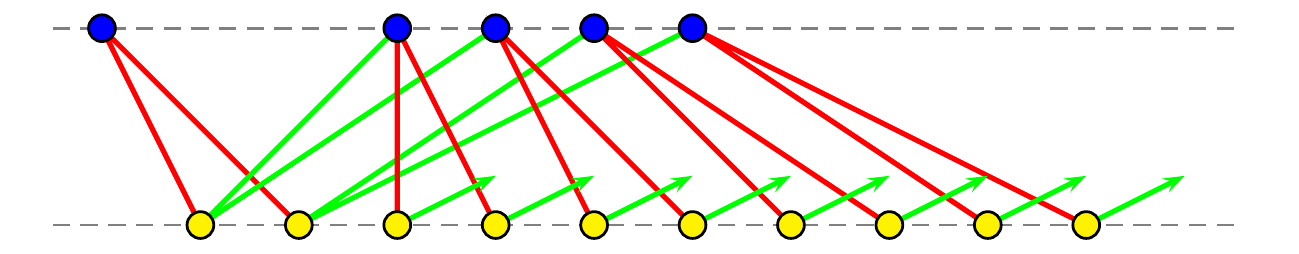}}{A $2$-track drawing of
  a tree with thickness $2$.}

The \emph{2-claw} is the tree with vertex set
$\{r,v_1,v_2,v_3,w_1,w_2,w_3\}$ and edge set
$\{rv_1,rv_2,rv_3,v_1w_1,v_2w_2,v_3w_3\}$, as illustrated in
\figref{TwoClaw}(a). The upper bound of $2$ on the $2$-track thickness
of trees is best possible since \citet{HS72} proved that the 2-claw
has $2$-track thickness exactly $2$, as illustrated in
\figref{TwoClaw}(b).

\Figure{TwoClaw}{\includegraphics{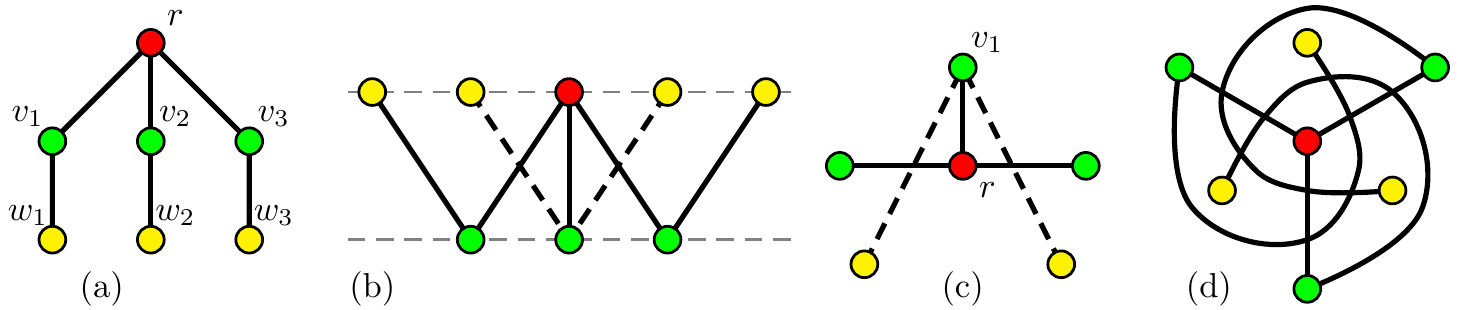}}{(a) The 2-claw.  (b) The
  2-claw has 2-track thickness 2. (c) The 2-claw is not a geometric
  thrackle. (d) The 2-claw drawn as a thrackle.}

What about the antithickness of a tree? Since every tree has $2$-track
thickness at most $2$, by reversing one track, every tree has
$2$-track antithickness at most $2$. And again the 2-claw shows that
this bound is tight.  In fact:

\begin{lem}
  \lemlabel{TwoClawGeometricThickness} The 2-claw is not a geometric
  thrackle.
\end{lem}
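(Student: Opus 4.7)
The plan is to suppose the 2-claw has a geometric thrackle drawing and derive a contradiction from a sign-pattern argument. For each $i\in\{1,2,3\}$, let $L_i$ be the line supporting the edge $e_i:=v_iw_i$. Because disjoint edge pairs must cross in a thrackle, for every $j\neq i$ the segments $e_j$ and $rv_j$ both cross $e_i$; hence $v_j,w_j$ lie on opposite sides of $L_i$, and $r,v_j$ lie on opposite sides of $L_i$. Orient each $L_i$ so that $r$ is on its positive side. Then $v_j$ is on the negative side and $w_j$ on the positive side of $L_i$ for all $j\neq i$, while $v_i,w_i\in L_i$. Recording each vertex's triple of signs $(\sigma_1,\sigma_2,\sigma_3)\in\{+,-,0\}^3$ relative to $(L_1,L_2,L_3)$, we find that $r=(+,+,+)$, each $v_i$ has $0$ in coordinate $i$ and $-$ in the other two coordinates, and each $w_i$ has $0$ in coordinate $i$ and $+$ in the other two.

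Suppose first that $L_1,L_2,L_3$ are in general position, bounding a triangle whose interior has sign $(t_1,t_2,t_3)\in\{+,-\}^3$. Moving along $L_i$, the three subsegments cut out by the two crossings with $L_j$ and $L_k$ carry $(\sigma_j,\sigma_k)$-signs $(t_j,t_k)$ (the side of the triangle on $L_i$), $(-t_j,t_k)$ (past the crossing with $L_j$), and $(t_j,-t_k)$ (past the crossing with $L_k$); the pattern $(-t_j,-t_k)$ is the unique sign pair never attained anywhere on $L_i$. For $v_i$ (needing $(\sigma_j,\sigma_k)=(-,-)$) and $w_i$ (needing $(+,+)$) to both lie on $L_i$, both $(-,-)$ and $(+,+)$ must appear among the attained patterns; since these two patterns are opposites, one of them equals the missing $(-t_j,-t_k)$ unless $t_j\neq t_k$. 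Imposing this for each $i\in\{1,2,3\}$ yields $t_1\neq t_2$, $t_2\neq t_3$, $t_1\neq t_3$, which is impossible in $\{+,-\}$.

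If instead $L_1,L_2,L_3$ are concurrent at a point $q$, the arrangement consists of only $6$ open wedges around $q$, and $r$ sits in one of them. This $r$-wedge has sign $(+,+,+)$ and is bounded by rays of two of the three lines; call them $L_\alpha$ and $L_\beta$, and write $L_\gamma$ for the remaining line. Tracking signs as one walks around the six wedges shows that the two rays of $L_\gamma$ carry $(\sigma_\alpha,\sigma_\beta)$-signs $(+,-)$ and $(-,+)$, so neither $(+,+)$ nor $(-,-)$ is attained on $L_\gamma\setminus\{q\}$. Consequently $v_\gamma$ and $w_\gamma$ cannot be placed on $L_\gamma$ with their required signs, a contradiction. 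The main obstacle is the bookkeeping for the sign-pattern table on each $L_i$ in the general-position case---specifically checking that the missing pair is exactly $(-t_j,-t_k)$ by carefully following how $\sigma_j,\sigma_k$ each flip across a single crossing---and handling the concurrent case with the same framework.
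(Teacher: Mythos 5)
Your proof is correct, and it takes a genuinely different route from the paper's. You work with the three lines $L_1,L_2,L_3$ supporting the pendant edges $v_iw_i$, pin down the full sign vector of every vertex relative to this arrangement (using that each disjoint pair $e_j,e_i$ and $rv_j,e_i$ must cross), and then show that the sign patterns required of $v_i$ and $w_i$ on $L_i$ are incompatible with the combinatorics of a three-line arrangement, in both the triangle and the concurrent configuration. The paper instead looks at the three lines through $r$ and the $v_i$: at least one of them, say the line through $rv_1$, has $v_2$ and $v_3$ on distinct sides, and then $v_1w_1\setminus\{v_1\}$ lies in a single open half-plane of that line and so can cross at most one of $rv_2$ and $rv_3$. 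The paper's argument is shorter and needs only one separating line; yours is longer but more systematic---it determines the whole sign table and treats the degenerate concurrent case explicitly, where the paper's one-line geometric claim quietly skips over degeneracies. Two small points you should make explicit to complete the case analysis: the lines $L_i$ are pairwise distinct and pairwise non-parallel (distinct because $v_j$ lies strictly off $L_i$; non-parallel because the segments $e_i$ and $e_j$ cross), so the triangle/concurrent dichotomy is exhaustive; and $v_i,w_i$ avoid the crossing points $L_i\cap L_j$ because their $\sigma_j$-signs are nonzero, so your three-subsegment bookkeeping legitimately applies to them.
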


\begin{proof}
  Suppose to the contrary that the 2-claw is a geometric thrackle, as
  illustrated in \figref{TwoClaw}(c). For at least one of the three
  edges incident to $r$, say $rv_1$, the other two vertices adjacent
  to $r$ are on distinct sides of the line through $rv_1$. Thus
  $v_1w_1$ can only intersect one of $rv_2$ and $rv_3$, which is the
  desired contradiction.
\end{proof}

This lemma shows that $2$ is a tight upper bound on the geometric
antithickness of trees. On the other hand, if we allow curved edges,
\citet{Woodall-Thrackles} proved that every tree is thrackleable, and
thus has antithickness $1$, as illustrated in \figref{TwoClaw}(d) in
the case of a 2-claw.

\subsection{Main Results and Conjectures}\seclabel{Conjectures}

A \emph{graph parameter} is a function $\beta$ that assigns to every
graph $G$ a non-negative integer $\beta(G)$. Examples that we have
seen already include thickness, geometric thickness, book thickness,
antithickness, geometric antithickness, and convex antithickness. Let
\F\ be a class of graphs.  Let $\beta(\F)$ denote the function
$f:\mathbb{N}\rightarrow\mathbb{N}$, where $f(n)$ is the maximum of
$\beta(G)$, taken over all $n$-vertex graphs $G\in\F$.  We say \F\ has
\emph{bounded} $\beta$ if $\beta(\F)\in\Oh{1}$ (where $n$ is the
hidden variable in $\Oh{1}$).

A graph parameter $\beta$ is \emph{bounded by} a graph parameter $\gamma$ (for some class \F), if there exists a \emph{binding} function $g$ such that $\beta(G)\leq g(\gamma(G))$ for every graph $G$ (in \F). If $\beta$ is bounded by $\gamma$ (in \F) and $\gamma$ is bounded by $\beta$ (in \F) then $\beta$ and $\gamma$ are \emph{tied} (in \F). If $\beta$ and $\gamma$ are tied, then a graph family \F\ has bounded $\beta$ if and only if \F\ has bounded $\gamma$. This definition is due to \citet{DO96} and \citet{Reed97}. Note that `tied' is a transitive relation. For $\beta$ and $\gamma$ to be not tied means that for some class of graphs, $\beta$ is bounded but $\gamma$ is unbounded (or vice versa). In this case, $\beta$ and $\gamma$ are \emph{separated}, which is terminology introduced by \citet{Eppstein01,Eppstein-AMS}.

The central questions of this paper ask which thickness/antithickness
parameters are tied. In \secref{Tied} we prove that thickness and
antithickness are tied---in fact we prove that these parameters are
both tied to arboricity, and thus only depend on the maximum density
of the graph's subgraphs. (See \secref{Tied} for the definition of arboricity.) 

\citet{Eppstein01} proved that book thickness and geometric thickness
are separated. In particular, for every $t$, there exists a graph with
geometric thickness $2$ and book thickness at least $t$; see
\citep{BO99,Blankenship-PhD03} for a similar result. Thus book
thickness is not bounded by geometric thickness. The example used here
is $K_n'$, which is the graph obtained from $K_n$ by subdividing each
edge exactly once. In \lemref{GeomAntiCompleteSubdiv} we prove that
$K_n'$ has geometric antithickness $2$. At the end of
\secref{Separating} we prove that $K_n'$ has convex antithickness at
least $\sqrt{n/6}$ (which is unbounded). Thus convex antithickness is
not bounded by geometric antithickness, implying that convex
antithickness and geometric antithickness are separated.

\citet{Eppstein-AMS} also proved that geometric thickness and
thickness are separated. In particular, for every $t$, there exists a
graph with thickness $3$ and geometric thickness at least $t$. Thus
geometric thickness is not bounded by thickness. (Note that it is open
whether every graph with thickness $2$ has bounded geometric
thickness.)\ \citet{Eppstein-AMS} used the following graph to
establish this result. Let $G_n$ be the graph having as its
$n+\binom{n}{3}$ vertices the singleton and tripleton subsets of an
$n$-element set, with an edge between two subsets when one is
contained in the other. (Note that $K_n'$ can be analogously defined---just replace tripleton by doubleton.)\ Then $G_n$ has thickness $3$, and for all $t$ there is an $n$ for which $G_n$ has geometric thickness at least $t$. We expect that an analogous separation result holds for antithickness and geometric antithickness.  Since $E(G_n)$ has an edge-partition into three star-forests, $G_n$ has antithickness $3$.  We conjecture that for all $t$ there is an $n$ for which $G_n$ has geometric antithickness at least $t$.  This would imply that geometric antithickness is not bounded by antithickness.

In the positive direction, we conjecture the following dualities:

\begin{conj}
\label{GeometricTied}
Geometric thickness and geometric antithickness are tied.
\end{conj}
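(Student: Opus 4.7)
The plan is to prove the two separate upper bounds that make up the conjecture, routing each through the common intermediate parameter of arboricity.

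\textbf{Direction I} (geometric thickness $\le f(\text{geometric antithickness})$): Suppose $G$ has geometric antithickness $k$. Since every straight-line thrackle on $n$ vertices has a linear number of edges (Lov\'asz--Pach--Szegedy), $|E(G)| \le O(kn)$ in every subgraph, so the arboricity of $G$ is $O(k)$. The paper's Section~\ref{sec:Tied} then already yields topological thickness $O(k)$; the task is to realise this by a straight-line drawing of $G$. I would attempt this \emph{in place}: fix a geometric drawing of $G$ of antithickness $k$, and exploit the restricted structure of straight-line thrackles (essentially a fan through a common crossing point together with an odd star-polygon cycle) to split each thrackle layer into a bounded number of noncrossing pieces without moving any vertex.

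\textbf{Direction II} (geometric antithickness $\le g(\text{geometric thickness})$): If $G$ has geometric thickness $k$, then $|E(G)| \le k(3n-6)$, so again the arboricity is $O(k)$ and Section~\ref{sec:Tied} gives topological antithickness $O(k)$. To turn this into a geometric bound I would work inside a fixed geometric thickness-$k$ drawing of $G$: decompose each planar colour class into a bounded number of forests by Nash--Williams, further split each forest into bipartite forests, and then apply a variant of the $2$-track parity colouring of Section~\ref{sec:Trees} (together with a track reversal) to express each piece as the union of two geometric thrackles on the given vertex positions. The delicate point is that the parity argument must be run in the ambient geometric drawing rather than in an idealised 2-track one.

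\textbf{Main obstacle.} In both directions the crux is the step from a combinatorial/topological decomposition to one realised by a single straight-line drawing of $G$. The separation result of \citet{Eppstein01} using $K_n'$ warns that the straight-line constraint can destroy boundedness of parameters that are combinatorially bounded, so arboricity alone cannot be enough on its own. For this reason I expect Conjecture~\ref{GeometricTied} to be at least as hard as the well-known open problem of whether graphs of bounded arboricity admit drawings of bounded geometric thickness, and I suspect that any resolution of either direction will come together with a new geometric structure theorem for one of these parameters.
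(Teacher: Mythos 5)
This statement is Conjecture~\ref{GeometricTied}: the paper does not prove it, and explicitly leaves it open, so there is no proof to compare against; what matters is whether your sketch closes the gap, and it does not. Both of your ``in place'' steps are false as stated. For Direction~I, a geometric thrackle can consist of pairwise \emph{crossing} segments (e.g.\ a convex thrackled matching, as in \lemref{CompleteMatching}); in a fixed drawing such a layer cannot be split into a bounded number of noncrossing pieces --- it needs as many pieces as it has edges, since a noncrossing class can contain at most one segment from a pairwise-crossing family. For Direction~II, a plane forest inside a geometric thickness-$k$ drawing can consist of pairwise \emph{disjoint} segments (e.g.\ a matching drawn as parallel segments), and then no bounded number of geometric thrackles on the given vertex positions covers it --- each thrackle class can use at most one of the disjoint segments; compare also \lemref{TwoClawGeometricThickness}, which shows even a 2-claw cannot be thrackled with straight edges. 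So in both directions you are forced to move vertices, and at that point the argument collapses back onto the open problem you flag at the end.

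The deeper issue is that routing through arboricity cannot work, even in principle, and not merely because ``arboricity alone cannot be enough'': \citet{Eppstein-AMS}'s graph $G_n$ has an edge-partition into three star-forests, hence arboricity $3$, yet unbounded geometric thickness, so the implication ``geometric antithickness $k$ $\Rightarrow$ arboricity $O(k)$ $\Rightarrow$ geometric thickness $O(k)$'' fails at its second arrow for Direction~I; and the paper conjectures (end of \secref{Conjectures}) that the same $G_n$ has unbounded geometric antithickness, which, if true, kills the analogous second arrow for Direction~II. What \secref{Tied} shows is only that \emph{topological} thickness and antithickness are tied to arboricity; any proof of Conjecture~\ref{GeometricTied} must extract genuinely geometric information from the given drawing beyond edge density, which your proposal does not do. As written, this is a correct identification of the difficulty rather than a proof.
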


\begin{conj}
\label{ConvexTied}
Book thickness and convex antithickness are tied.
\end{conj}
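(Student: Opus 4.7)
The plan is to establish both directions of the tying: book thickness$(G) \leq f_1(\text{convex antithickness}(G))$ and convex antithickness$(G) \leq f_2(\text{book thickness}(G))$. For the first direction, given a convex drawing of $G$ partitioned into $k$ monochromatic thrackles $T_1, \ldots, T_k$, I would invoke the structural theory of outerplanar/convex thrackles of \citet{CN10}: each $T_i$ has at most $n$ edges and a ``near-star'' shape with a dominating vertex. The key intermediate lemma to establish is that every convex thrackle admits a book embedding with at most a universal constant $c$ pages. Combining these would give book thickness$(G) \leq ck$, but only after reconciling the potentially distinct preferred vertex orderings of the $k$ thrackles. One plausible route is to place the dominating vertices $r_1, \ldots, r_k$ of the $T_i$ at one end of the spine and distribute the remaining vertices by a common BFS-like ordering of $T_1 \cup \cdots \cup T_k$.

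For the second direction, starting from a $k$-page book embedding, the simple attempt---keep the convex ordering and partition each page into convex thrackles---fails, because a thrackle within a single page (which is outerplanar in the given ordering) must consist of edges that pairwise share an endpoint (two non-crossing, non-adjacent edges cannot coexist in a thrackle), forming a star or triangle, and outerplanar graphs can require $\Theta(n)$ stars to cover (for instance a path). The resolution is to merge edges from different pages into common thrackles, exploiting the fact that edges on distinct pages frequently cross in the convex ordering. Concretely, one would partition $E(G)$ by a carefully chosen vertex ``center'' for each edge (mixing vertex covers across pages), aiming for an $\Oh{k}$-size family of star-centered thrackles in which inter-page edges fill in the required crossings.

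The main obstacle, shared by both directions, is the rigidity of convex drawings: the vertex ordering cannot be freely rearranged to suit individual subgraphs, unlike in the topological or geometric settings. A unifying approach---paralleling the authors' \secref{Tied} proof that both thickness and antithickness are tied to arboricity---would introduce a mediating parameter (such as an ``outerplanar arboricity'': the minimum edge-partition of $G$ into outerplanar subgraphs, with no imposed vertex ordering) and prove it tied to each of book thickness and convex antithickness, yielding the conjecture by transitivity. Showing that book thickness is bounded by such a parameter is likely the hardest single step, since bounded arboricity alone does not imply bounded book thickness (e.g., the $1$-subdivision of $K_n$ has arboricity $2$ but unbounded book thickness), so the thrackle structure must be used beyond mere density bounds.
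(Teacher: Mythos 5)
The statement you are addressing is \cref{ConvexTied}, which the paper presents as an open conjecture, not a theorem: there is no proof in the paper to compare your attempt against. Indeed, the paper points out (via \thmref{ThreeTied}, which ties convex antithickness to queue-number) that the truth of this conjecture would imply that planar graphs have bounded queue-number, a long-standing open problem of Heath et al.\ \citep{HR92,HLR92}. Any correct proof would therefore be a major result, and your sketch falls well short of one.

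Concretely, both halves of your plan have fatal gaps. For the direction ``book thickness is bounded by convex antithickness'', the step ``each convex thrackle has book thickness at most $c$, hence the union of $k$ of them has book thickness at most $ck$'' is false: book thickness is not subadditive over edge-partitions, because all parts must share a single spine ordering. The paper's own separating example $K_n'$ is an edge-disjoint union of two star forests (each of book thickness $1$) yet has unbounded book thickness; you cite this very example at the end of your proposal without noticing that it refutes this step. The only leverage available is that the $k$ thrackles come with a \emph{common} circular order, but your plan discards that order in favour of a BFS-like reordering and gives no argument that any single order serves all $k$ thrackles simultaneously. For the converse direction you correctly diagnose that individual pages cannot be partitioned into boundedly many convex thrackles, but ``merge edges from different pages into common star-centered thrackles'' is a hope, not a mechanism; by \lemref{QNCAT} this direction would already imply that queue-number is bounded by stack-number, itself a well-known open question. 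Finally, the proposed mediating parameter (``outerplanar arboricity'') cannot work even in principle: every forest is outerplanar and every outerplanar graph has arboricity at most $2$, so that parameter is tied to arboricity, hence by \thmref{ThicknessTied} to thickness and antithickness---and $K_n'$ shows that book thickness is \emph{not} tied to arboricity. No parameter tied to arboricity can mediate the conjectured tie.
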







In \thmref{ThreeTied} we prove that convex antithickness and queue-number (defined in \secref{StackQueueTrackLayouts}) are tied. Thus the truth of \cref{ConvexTied} would imply that book thickness and queue-number are tied. This would imply, since planar graphs have bounded book thickness \citep{Yannakakis89,BS84}, that planar graphs have bounded queue-number, which is an open problem due to Heath\etal\citep{HR92,HLR92}; see \citep{DFP13,DMW17,Duj15} for recent progress. Thus a seemingly easier open problem is to decide whether planar graphs have bounded geometric antithickness.

\citet{LPS-DCG97} proved two related results. First they proved that
every bipartite thrackleable graph is planar. And more generally, they proved that a
bipartite graph has a drawing in which every pair of edges intersect
an odd number of times if and only if the graph is planar. In their
construction, non-adjacent edges cross once, and adjacent edges
intersect three times.

\subsection{Other Contributions}

In addition to the results discussed above, this paper makes the following contributions. 
In \secref{StackQueueTrackLayouts} we prove that convex antithickness is tied to queue-number and track-number. Several interesting results follow from this theorem. \secref{ColouringDrawing} surveys the literature on the problem of determining the thickness or antithickness of a given (uncoloured) drawing of a graph. 
\twosecref{Tied}{Separating} respectively prove two results discussed above, namely that thickness and antithickness are tied, and that convex antithickness and geometric antithickness are separated. \secref{Extremal} studies natural extremal questions for all of the above parameters. Finally, \secref{Complete} considers the various antithickness parameters for a complete graph. 

\section{Stack, Queue and Track Layouts}\seclabel{StackQueueTrackLayouts}

This section introduces track and queue layouts, which are well studied graph layout models. We show that they are closely related to convex antithickness.

A \emph{vertex ordering} of an $n$-vertex graph $G$ is a bijection
$\pi:V(G)\rightarrow\{1,2,\dots,n\}$. We write $v<_\pi w$ to mean that
$\pi(v)<\pi(w)$. Thus $\leq_\pi$ is a total order on $V(G)$. We say $G$
or $V(G)$ is \emph{ordered by} $<_\pi$. Let $L(e)$ and $R(e)$ denote
the end-vertices of each edge $e\in E(G)$ such that $L(e)<_\pi R(e)$. At
times, it will be convenient to express $\pi$ by the list
$(v_1,v_2,\dots,v_n)$, where $\pi(v_i)=i$. These notions extend to
subsets of vertices in the natural way. Suppose that
$V_1,V_2,\dots,V_k$ are disjoint sets of vertices, such that each
$V_i$ is ordered by $<_i$. Then $(V_1,V_2,\dots,V_k)$ denotes the
vertex ordering $\pi$ such that $v<_\pi w$ whenever $v\in V_i$ and
$w\in V_j$ with $i<j$, or $v\in V_i$, $w\in V_i$, and $v<_i w$. We
write $V_1<_\pi V_2<_\pi\dots<_\pi V_k$.

Let $\pi$ be a vertex ordering of a graph $G$. Consider two edges
$e,f\in E(G)$ with no common end-vertex. There are the following three
possibilities for the relative positions of the end-vertices of $e$ and
$f$ in $\pi$. Without loss of generality $L(e)<_\pi L(f)$.

\begin{itemize}

\item $e$ and $f$ \emph{cross}: $L(e)<_\pi L(f)<_\pi R(e)<_\pi R(f)$.

\item $e$ and $f$ \emph{nest} and $f$ is \emph{nested inside} $e$:
  $L(e)<_\pi L(f)<_\pi R(f)<_\pi R(e)$

\item $e$ and $f$ are \emph{disjoint}: $L(e)<_\pi R(e)<_\pi L(f)<_\pi
  R(f)$

\end{itemize}

A \emph{stack} (respectively, \emph{queue}) in $\pi$ is a set of edges
$F\subseteq E(G)$ such that no two edges in $F$ are crossing (nested)
in $\pi$. Observe that when traversing $\pi$, edges in a stack (queue)
appear in LIFO (FIFO) order---hence the names.

A \emph{linear layout} of a graph $G$ is a pair $(\pi,\{E_1,E_2,\dots,E_k\})$
where $\pi$ is a vertex ordering of $G$, and $\{E_1,E_2,\dots,E_k\}$
is a partition of $E(G)$. A $k$-\emph{stack} ($k$-\emph{queue})
\emph{layout} of $G$ is a linear layout $(\pi,\{E_1,E_2,\dots,E_k\})$
such that each $E_i$ is a \emph{stack} (\emph{queue}) in $\pi$.  At
times we write $\stack{e}=\ell$ (or $\queue{e}=\ell$) if $e\in
E_\ell$.

A graph admitting a $k$-stack ($k$-queue) layout is called a
$k$-\emph{stack} ($k$-\emph{queue}) \emph{graph}. The \emph{stack-number}
of a graph $G$, denoted by $\sn{G}$, is the minimum $k$ such that $G$
is a $k$-stack graph.  The \emph{queue-number} of $G$, denoted by
$\qn{G}$, is the minimum $k$ such that $G$ is a $k$-queue graph.
See \citep{DujWoo04} for a summary of results and references on
stack and queue layouts.

A $k$-stack layout of a graph $G$ defines a convex drawing of $G$ with thickness $k$, and vice versa. Thus the stack-number of $G$ equals the book thickness of $G$.

\begin{lem}
\lemlabel{QNCAT}
For every graph $G$, the queue-number of $G$ is at most the convex antithickness of $G$. 
\end{lem}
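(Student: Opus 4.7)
The plan is to convert a convex drawing realising the antithickness of $G$ directly into a queue layout with the same number of colour classes. Let $k$ be the convex antithickness of $G$, and fix a convex drawing in which $E(G)=T_1\cup\cdots\cup T_k$ with each $T_i$ a thrackle. Let $\pi$ be the vertex ordering obtained by listing the vertices in the cyclic order they appear around the convex hull, starting at an arbitrarily chosen point.

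The key observation I would establish first is that in a convex drawing, two edges $e,f$ with no common end-vertex cross in the drawing if and only if they are crossing in $\pi$ in the sense of \secref{StackQueueTrackLayouts}. This is immediate from the fact that in a straight-line convex drawing two chords cross precisely when their end-vertices alternate around the hull, and rotating the cyclic order (i.e.\ cutting it anywhere to obtain $\pi$) does not change which $4$-tuples of points alternate. Equivalently, two edges with no common end-vertex are disjoint in the drawing if and only if they are nested or disjoint in $\pi$.

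I would then appeal to the definition of a thrackle: every two edges in a common class $T_i$ intersect exactly once, so they either share an end-vertex or cross in the drawing. By the key observation, this means every two edges of $T_i$ with no common end-vertex are crossing (not nested) in $\pi$. Hence each $T_i$ is a queue in $\pi$, and $(\pi,\{T_1,\dots,T_k\})$ is a $k$-queue layout of $G$. Therefore $\qn{G}\leq k$, as claimed.

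There is no real obstacle here; the statement is a short dualisation of the correspondence (noted immediately before the lemma) between convex drawings with thickness $k$ and $k$-stack layouts, with \emph{crossing} and \emph{nesting} swapped. The only step that needs care is making the ``crossing in the drawing $\Leftrightarrow$ crossing in $\pi$'' equivalence precise, and this is simply the observation that the cyclic-to-linear cut preserves alternation of end-vertices.
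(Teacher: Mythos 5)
Your proof is correct and follows essentially the same route as the paper: take the circular vertex ordering of the convex drawing, note that any two edges of a thrackle either share an end-vertex or cross (hence are not nested) in that ordering, and conclude each thrackle is a queue. The extra care you take with the cyclic-to-linear cut preserving alternation is a detail the paper leaves implicit.
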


\begin{proof}
Consider a convex drawing of a graph $G$ with convex antithickness $k$. Let $(v_1,\dots,v_n)$ be the underlying circular ordering and let $E_1,\dots,E_k$ be the corresponding edge-partition. Any two edges in $E_i$ cross or intersect at a common end-vertex with respect to the vertex ordering $(v_1,\dots,v_n)$. Thus each $E_i$ is a queue, and $G$ has queue-number at most $k$. 
\end{proof}


We now set out to prove a converse to \lemref{QNCAT}. A key tool will be track layouts, which generalise the notion of 2-track drawings, and have been previously studied by several authors
\citep{DMW05,DM-GD03,DPW04,DujWoo05,Miyauchi-Track,Miyauchi08,Miyauchi08a,DLMW-DM09,GLM-CGTA05,Duj15,DMW17}.

A \emph{vertex} $|I|$\emph{-colouring} of a graph $G$ is a partition
$\{V_i:i\in I\}$ of $V(G)$ such that for every edge $vw\in E(G)$, if
$v\in V_i$ and $w\in V_j$ then $i\ne j$. The elements of $I$ are
\emph{colours}, and each set $V_i$ is a \emph{colour class}. Suppose
that $<_i$ is a total order on each colour class $V_i$.  Then each
pair $(V_i,<_i)$ is a \emph{track}, and $\{(V_i,<_i):i\in I\}$ is an
$|I|$-\emph{track assignment} of $G$.  To ease the notation we denote
track assignments by $\{V_i:i\in I\}$ when the ordering on each colour
class is implicit.

An \emph{X-crossing} in a track assignment consists of two edges $vw$
and $xy$ such that $v<_ix$ and $y<_jw$, for distinct colours $i$ and
$j$.  An \emph{edge $k$-colouring} of $G$ is simply a partition
$\{E_i:1\leq i\leq k\}$ of $E(G)$.  A \emph{$(k,t)$-track layout} of
$G$ consists of a $t$-track assignment of $G$ and an edge
$k$-colouring of $G$ with no monochromatic X-crossing. A graph
admitting a $(k,t)$-track layout is called a \emph{$(k,t)$-track
  graph}. The \emph{track-number} of a graph $G$ is the minimum $t$
such that $G$ is a $(1,t)$-track graph.



The next two lemmas give a method that constructs a convex drawing from a track layout.

\begin{lem}
  \lemlabel{ConvexAntithickness} Suppose that $K_t$ has a convex
  drawing with antithickness $p$, in which each thrackle is a matching. Then
  every $(k,t)$-track graph $G$ has convex antithickness at most $kp$.
\end{lem}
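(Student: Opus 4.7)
I would build a convex drawing of $G$ with antithickness at most $kp$ by using the given $K_t$-drawing as a template: blow up each vertex of $K_t$ into a short arc of the convex hull that holds one whole track of the $(k,t)$-track layout. Concretely, let $u_1,\dots,u_t$ be the vertices of $K_t$ in cyclic order on the convex hull, and let $F_1,\dots,F_p$ be the thrackle partition. For each track $T_i$ of the $(k,t)$-track layout of $G$, I would replace $u_i$ by a short arc of the hull on which the vertices of $T_i$ are placed in the given track order, using a consistent orientation (say, every track is traversed in its track order while moving clockwise). This yields a convex drawing of $G$.

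Next, I would assign each edge $e$ of $G$ with endpoints in tracks $T_i$ and $T_j$ the colour pair $(\alpha(e),\beta(e))$, where $\alpha(e)\in\{1,\dots,k\}$ is the edge-colour of $e$ in the track layout and $\beta(e)\in\{1,\dots,p\}$ is the index of the thrackle of the $K_t$-drawing containing $u_iu_j$. This partitions $E(G)$ into at most $kp$ classes, so it suffices to prove that each colour class is a thrackle. Let $e,f$ be distinct edges of $G$ with the same colour $(\alpha,\beta)$. In the first case, $e$ and $f$ have their endpoints in the same two tracks $T_i,T_j$; since both carry track-colour $\alpha$ and the layout has no monochromatic X-crossing, the consistent orientation of tracks around the hull forces $e$ and $f$ either to share an endpoint or to cross in the convex drawing, hence to intersect exactly once. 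In the second case, $e$ and $f$ lie between different pairs of tracks, so the two edges of $K_t$ they project to are distinct; both belong to the thrackle $F_\beta$, which is a matching, so they cross (and have no common endpoint) in the $K_t$-drawing. Therefore the four tracks containing the endpoints of $e,f$ alternate cyclically around the hull, so $e$ and $f$ also interleave and cross exactly once; the matching property further rules out shared endpoints.

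The main obstacle is the bookkeeping in the first case: establishing that the consistent orientation of tracks around the hull makes ``not an X-crossing in the track layout'' equivalent to ``share an endpoint or cross in the convex drawing''. I would verify this by a short direct check on the cyclic positions of four endpoints, mirroring the classical 2-track-drawing analysis: if $v<_i x$ on track $T_i$ and $w<_j y$ on track $T_j$, then going clockwise one meets $v,x$ inside $T_i$'s arc and $w,y$ inside $T_j$'s arc, so the endpoints of $vw$ and $xy$ alternate around the hull and the chords cross. A secondary point to keep in mind is that the blow-up arcs should be short enough that no spurious crossings are introduced among edges between distinct track pairs; since the crossing pattern between two chords in a convex drawing is determined purely by the cyclic order of their four endpoints, any blow-up using sufficiently small arcs suffices.
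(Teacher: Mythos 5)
Your proposal is correct and follows essentially the same route as the paper: replace each vertex of $K_t$ by the corresponding track (in track order around the hull), colour each edge by the pair (track-layout edge colour, index of the thrackled matching containing the corresponding $K_t$-edge), and verify each of the $kp$ classes is a convex thrackle via the same three-way case analysis (four tracks: the $K_t$-edges cross; three tracks: impossible since the thrackle is a matching; two tracks: a non-crossing pair would be a monochromatic X-crossing). The orientation check you flag in the two-track case is exactly the short verification the paper relies on, and your observation that crossings in convex position depend only on the cyclic order of endpoints disposes of the blow-up concern.
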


\begin{proof} 
In the given convex drawing of $K_t$, say the vertices are ordered $1,2,\dots,t$ around a
  circle, and $\{T_1,T_2,\dots,T_p\}$ is an edge-partition into thrackled matchings.  
  Let $\{(V_i,<_i):1\leq i\leq t\}$ be the track assignment and $\{E_\ell:1\leq\ell\leq k\}$ be the edge colouring in
  a $(k,t)$-track layout of $G$. Let
  $\pi=(V_1,V_2,\dots,V_t)$ be a circular vertex ordering of $G$.  For
  each $\ell\in\{1,2,\dots,k\}$ and $j\in\{1,2,\dots,p\}$, let
  $E_{\ell,j}=\{vw\in E_\ell:v\in V_{i_1},w\in V_{i_2},i_1i_2\in  T_j\}$.  
  We now show that each set $E_{\ell,j}$ is a convex thrackle in $\pi$, as illustrated in \figref{TrackConvexAntithickness}. 

\Figure{TrackConvexAntithickness}{\includegraphics{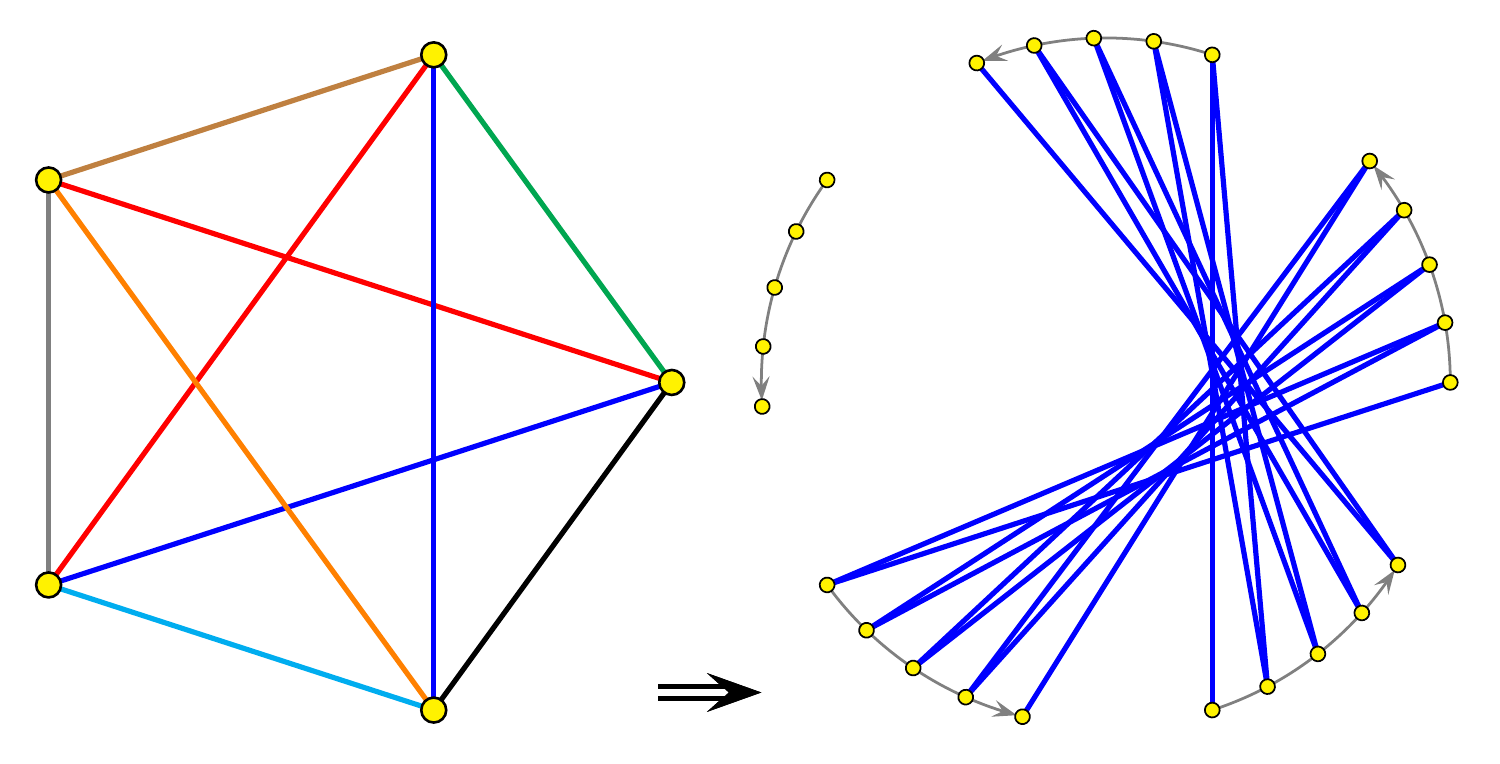}}{In the proof of \lemref{ConvexAntithickness}, starting from a 5-track graph $G$, and a convex drawing of $K_5$ with antithickness 8, in which each thrackle is a matching, replace the vertices of $K_5$ by the tracks to produce a convex drawing of $G$ with convex antithickness 8. 
}
  
  Consider two edges $e,f\in E_{\ell,j}$ with no common endvertex. If the endvertices of $e$ and $f$ belong to four distinct tracks, then $e$ and $f$ cross in $\pi$, since the edges in $T_j$ pairwise cross. The endvertices of $e$ and $f$ do not belong to three distinct tracks, since $T_j$ is a matching. If the endvertices of $e$ and $f$ belong to two distinct tracks, then $e$ and $f$ cross in $\pi$, as otherwise $e$ and $f$ form a monochromatic crossing in $G$. Thus  $E_{\ell,j}$ is a convex thrackle in $\pi$, and   $G$ has convex antithickness at most $kp$.
  \end{proof}

The following results show that $(k,t)$-track graphs have bounded convex antithickness (for bounded $k$ and $t$). We start by considering small values of $t$. 

\begin{lem}
  \lemlabel{Track}
  \aaa\ Every $(k,3)$-track graph $G$ has convex antithickness at most $3k$.\\
  \bbb\ Every $(k,4)$-track graph $G$ has convex antithickness at most $5k$.\\
  \ccc\ Every $(k,5)$-track graph $G$ has convex antithickness at most $8k$.
\end{lem}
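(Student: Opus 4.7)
The plan is to apply \lemref{ConvexAntithickness} three times, once for each $t\in\{3,4,5\}$, with the parameter $p$ taken to be $3,5,8$ respectively. For each $t$ I will exhibit a convex drawing of $K_t$ whose edge set partitions into $p$ thrackled matchings; the lemma then delivers the three claimed bounds $kp$ on the convex antithickness of $(k,t)$-track graphs.

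Place the vertices of $K_t$ at positions $1,2,\dots,t$ in order around a convex hull. Recall that two chords cross in this drawing precisely when their endpoints interleave cyclically, so a thrackled matching is nothing more than a set of pairwise vertex-disjoint, pairwise crossing chords. This reduces each part to a small combinatorial casework.

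For part (a), every two edges of $K_3$ share a vertex, so no matching of $K_3$ has more than one edge. The three edges therefore form three singleton thrackled matchings, giving $p=3$. For part (b), in convex position only the diagonal pair $\{13,24\}$ is simultaneously a matching and a thrackle: the other two perfect matchings $\{12,34\}$ and $\{14,23\}$ consist of disjoint non-crossing edges. Take $\{13,24\}$ as one thrackled matching and the four boundary edges $12,23,34,14$ as singletons, yielding $p=5$.

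For part (c), I first observe that each of the five boundary edges $12,23,34,45,15$ of $K_5$ crosses no other chord, so each must form its own singleton class in any partition into thrackled matchings. Among the five diagonals $13,14,24,25,35$ a direct enumeration via the interleaving test shows that the 2-element thrackled matchings are exactly
\[
\{13,24\},\ \{24,35\},\ \{35,14\},\ \{14,25\},\ \{25,13\},
\]
and that, viewed as a graph on the diagonals, these crossing-pairs form a 5-cycle. Covering a $5$-cycle by two independent edges plus the leftover vertex then partitions the diagonals into three thrackled matchings, for instance $\{13,24\}$, $\{14,35\}$, and the singleton $\{25\}$. Together with the five boundary singletons this gives a total of $p=8$ classes. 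The only nonroutine step is the $5$-cycle observation in part (c); otherwise the proof is pure bookkeeping.
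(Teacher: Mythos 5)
Your proposal is correct and follows essentially the same route as the paper: invoke \lemref{ConvexAntithickness} with $p=3,5,8$ and exhibit the explicit partitions of $K_3$, $K_4$, $K_5$ into thrackled matchings (the paper uses $\{13,24\},\{25,14\},\{35\}$ plus the five boundary singletons for $K_5$, which is the same decomposition up to your equally valid choice of two independent edges in the $5$-cycle of crossing diagonal pairs). The extra observations you include (that these are the only possible $2$-element thrackled matchings) are correct but not needed for the upper bound.
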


\begin{proof}
By  \lemref{ConvexAntithickness}, it is enough to show that $K_3$, $K_4$ and $K_5$ admit convex drawings with their edges partitioned into 3, 5, and 8 thrackled matchings, respectively. The first claim is trivial. For the second claim, position the vertices of $K_4$ around a circle. Colour the two crossing edges blue. Colour each of the four other edges by a
  distinct colour. We obtain a convex drawing of $K_4$ with its edges
  partitioned into five thrackled matchings. Finally, say $V(K_5)=\{1,2,3,4,5\}$. Position the vertices of $K_5$ around a
  circle in the order $1,2,3,4,5$. Then
  $\{13,24\},\{25,14\},\{35\},\{12\},\{23\},\{34\},\{45\},\{15\}$ is a
  partition of $E(K_5)$ into 8 thrackled matchings. 
\end{proof}


\citet{DPW04} proved that every outerplanar graph has a
$5$-track layout. Thus \lemref{Track}(c) with $k=1$ implies:

\begin{cor}
  Every outerplanar graph $G$ has convex antithickness at most $8$.
\end{cor}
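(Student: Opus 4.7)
The plan is to combine two results: the theorem of \citet{DPW04} that every outerplanar graph admits a $5$-track layout (that is, a $(1,5)$-track layout, since only one edge colour is needed) with \lemref{Track}(c).

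First I would invoke \citet{DPW04} to produce, for any outerplanar graph $G$, a track assignment of $G$ onto $5$ tracks with no X-crossing at all; this is exactly a $(k,5)$-track layout with $k=1$.

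Then I would apply \lemref{Track}(c) to this layout. That lemma bounds the convex antithickness of any $(k,5)$-track graph by $8k$, so setting $k=1$ gives convex antithickness at most $8$, as required. There is no real obstacle here: the work has already been done in \lemref{Track}(c), which built the needed convex drawing of $K_5$ with its edges partitioned into $8$ thrackled matchings, and in \lemref{ConvexAntithickness}, which showed how to lift such a drawing of $K_t$ through a $(k,t)$-track layout. The only subtlety worth noting is that the $5$-track assignment coming from \citep{DPW04} uses a single edge-colour class, so the bound becomes $8 \cdot 1 = 8$ rather than some multiple of it.
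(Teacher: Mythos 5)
Your argument is correct and is exactly the paper's proof: cite \citet{DPW04} for the $5$-track layout of outerplanar graphs, interpret it as a $(1,5)$-track layout, and apply \lemref{Track}(c) with $k=1$ to obtain the bound $8$. Nothing further is needed.
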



Let $\ln n$ be the natural logarithm of $n$. 
  Let $H(n):=\sum_{i=1}^n\frac{1}{i}$ denote the $n$-th harmonic
number. It is well-known that
\begin{equation}
  \eqnlabel{Harmonics}
  \ln n+\gamma
  \leq 
  H(n)
  <
  \ln n+\gamma+\frac{1}{2n},
\end{equation}
where $\gamma=0.577\ldots$ is the Euler--Mascheroni constant; see \citep{HarmonicNumber,GKP94}.

The constructions in the proof of \lemref{Track} generalise as follows. 

\begin{lem}
\lemlabel{CompleteMatching} 
The complete graph $K_n$ has a convex drawing with antithickness $p$ in which every thrackle is a matching, for some integer $p<n \ln(2n)$. That is, there is an edge $p$-colouring, such that edges that are disjoint or have a vertex in common receive distinct  colours.
\end{lem}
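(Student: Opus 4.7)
The plan is to place the vertices of $K_n$ in convex position in the order $v_1,v_2,\dots,v_n$ and to color each edge $v_iv_j$ (with $i<j$) by the pair $(\ell,k)$, where $\ell:=j-i$ is its \emph{length} and $k:=\lceil i/\ell\rceil$ is a \emph{block index}. The palettes used for different values of $\ell$ will be disjoint, so the task reduces to partitioning, for each length $\ell\in\{1,\dots,n-1\}$ separately, the $n-\ell$ length-$\ell$ edges into thrackled matchings.

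For each pair $(\ell,k)$ with $1\le\ell\le n-1$ and $1\le k\le\lceil(n-\ell)/\ell\rceil$, define
\[
W_{\ell,k} \;:=\; \bigl\{\,v_iv_{i+\ell} :\;(k-1)\ell < i \le k\ell \text{ and } i+\ell\le n\,\bigr\}.
\]
The key step is to verify that each nonempty $W_{\ell,k}$ is a thrackled matching in the convex drawing. The matching property is immediate: the left-endpoint indices of the edges in $W_{\ell,k}$ lie in the interval $((k-1)\ell,k\ell]$ while the right-endpoint indices lie in $(k\ell,(k+1)\ell]$; these two intervals are disjoint and within each the indices are distinct. For the pairwise-crossing property, any two edges $v_iv_{i+\ell}$ and $v_{i'}v_{i'+\ell}$ in $W_{\ell,k}$ with $i<i'$ satisfy $i'-i<\ell$, hence $i<i'<i+\ell<i'+\ell$, which is exactly the interleaving condition for two chords in convex position to cross.

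To finish, I would count the total number of colors. Since each length $\ell$ contributes $\lceil(n-\ell)/\ell\rceil=\lfloor(n-1)/\ell\rfloor$ nonempty blocks,
\[
p \;=\; \sum_{\ell=1}^{n-1}\Bigl\lfloor\frac{n-1}{\ell}\Bigr\rfloor \;\le\; (n-1)\,H(n-1).
\]
Combining this with the estimate from \eqnref{Harmonics} and the inequality $\gamma<\ln 2$ gives $p<n\ln(2n)$ after routine algebra.

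The main conceptual point is to notice that a thrackled matching of length-$\ell$ edges contains at most $\ell$ edges---otherwise two of its edges are forced to be either disjoint in convex position or to share an endpoint---which compels roughly $n/\ell$ colors per length and produces a harmonic sum when summed over $\ell$. Once this is in hand, the rest of the argument is bookkeeping; the construction in fact recovers the ad hoc edge partitions of $K_3$, $K_4$ and $K_5$ exhibited in the proof of \lemref{Track}.
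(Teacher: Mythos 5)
Your construction is correct and is essentially the paper's own proof: both partition the edges by length, group $\ell$ consecutive length-$\ell$ chords into a block, observe that each block is a thrackled matching because consecutive starting indices within a window of size $\ell$ force pairwise interleaving, and then bound the number of blocks by a harmonic sum. The only difference is that the paper measures length cyclically (so $\ell\le\lfloor n/2\rfloor$ and each length class has $n$ edges), giving a total of roughly $n\ln(n/2)+\gamma n+n/2$ versus your $(n-1)H(n-1)\approx n\ln n+\gamma n$ from linear lengths up to $n-1$; both comfortably satisfy $p<n\ln(2n)$, your version precisely because $\gamma<\ln 2$, as you note.
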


\begin{proof} 
By the above constructions, we may assume that $n\geq 6$. 
  Let $(v_0,v_1,\dots,v_{n-1})$ be the vertices of $K_n$ in order
  around a circle. For each $\ell\in\{1,2,\dots,\floor{\frac{n}{2}}\}$
  and $j\in\{0,1,\dots,\ceil{n/\ell}-1\}$, let $E_{\ell,j}$ be the set
  of edges $$E_{\ell,j}:=\{v_iv_{(i+\ell)\bmod{n}}:j\ell\leq
  i\leq(j+1)\ell-1\}.$$ 
 As illustrated in \figref{ConvexMatchingThrackle},    $E_{\ell,j}$ is a thrackle and a  matching.
 
 \Figure{ConvexMatchingThrackle}{\includegraphics{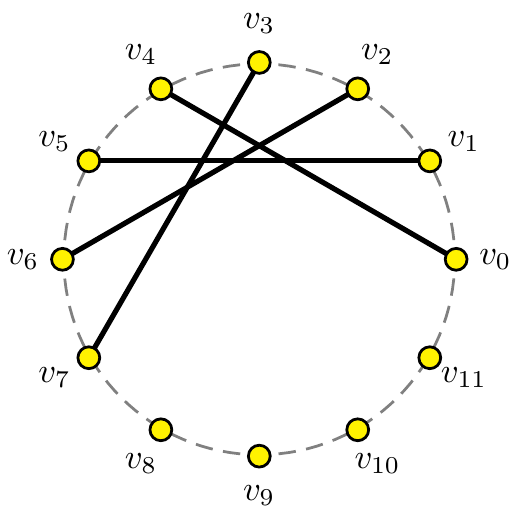}}{The
  set of edges $E_{\ell,j}$ in \lemref{CompleteMatching} with $\ell=4$
  and $j=0$.}

  Now
  \begin{align*}
    p
     \;=\;
    \sum_{\ell=1}^{\floor{\frac{n}{2}}}\ceil{n/\ell}
    \;\leq\;
    \sum_{\ell=1}^{\floor{\frac{n}{2}}}\frac{n+\ell-1}{\ell}
   & \;=\;
    (n-1)\cdot H(\floor{\tfrac{n}{2}})+\floor{\tfrac{n}{2}}.
  \end{align*}
By     \eqnref{Harmonics}, 
  \begin{align*}
p 
\;&<\; 
(n-1) \left( \ln \floor{\tfrac{n}{2}} + \gamma+\frac{1}{2\floor{\tfrac{n}{2}}} \right) +\floor{\tfrac{n}{2}}   \\
\;&\leq \; 
n\ln(n) - n \ln(2) + \gamma n + \frac{n-1}{2\floor{\tfrac{n}{2}}}  +\tfrac{n}{2}   \\
\;&\leq \; 
n\ln(n)  + n \left( - \ln(2) + \gamma + \tfrac{1}{2} \right)  + \tfrac{1}{4}   \\
\;&< \; 
n\ln(n)  + \frac{2n}{5}  + \tfrac{1}{4}   \\
 \;&<\;
n\ln(n)  + \frac{n}{2}  \\
 \;&<\;
    n\ln(2n).\quad \qedhere
  \end{align*}
\end{proof}



\twolemref{ConvexAntithickness}{CompleteMatching} imply:

\begin{thm}
  \thmlabel{kt-Track} Every $(k,t)$-track graph $G$ has convex
  antithickness at most $kt\ln(2t)$.
\end{thm}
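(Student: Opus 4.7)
The proof will be a direct composition of the two preceding lemmas. The plan is to use \lemref{CompleteMatching} to obtain a good convex drawing of $K_t$ into thrackled matchings, and then to feed this drawing into \lemref{ConvexAntithickness} to convert the $(k,t)$-track layout of $G$ into a convex drawing of $G$ with the claimed antithickness.

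In detail, I would first apply \lemref{CompleteMatching} with $n=t$, which supplies a convex drawing of $K_t$ whose edges are partitioned into $p$ thrackled matchings, for some integer $p<t\ln(2t)$. Second, since $G$ is a $(k,t)$-track graph, I would apply \lemref{ConvexAntithickness} with this particular drawing of $K_t$: the conclusion is that $G$ admits a convex drawing with antithickness at most $kp$. Combining the two bounds gives convex antithickness at most $kp<kt\ln(2t)$, which is the desired inequality.

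There is no real obstacle here: the hypotheses of \lemref{ConvexAntithickness} match exactly the conclusion of \lemref{CompleteMatching} (namely, a convex drawing of $K_t$ with each colour class a thrackled matching), so the argument is a one-line composition once both lemmas are in hand. The only thing to check is the slight bookkeeping that the strict inequality $p<t\ln(2t)$ from \lemref{CompleteMatching} yields the weak inequality $kp\leq kt\ln(2t)$ claimed in the theorem statement, which is immediate since $kp$ is an integer and $kt\ln(2t)$ is at least $kp$.
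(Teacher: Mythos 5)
Your proposal is correct and is exactly the paper's argument: the theorem is stated as an immediate consequence of \lemref{ConvexAntithickness} and \lemref{CompleteMatching}, applied with $n=t$ just as you describe. Nothing further is needed.
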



A similar result was proved by \citet{DPW04}, who showed that a
$(k,t)$-track graph has geometric thickness at most
$k\ceil{\frac{t}{2}}\floor{\frac{t}{2}}$. It is interesting that track
layouts can be used to produce graph drawings with small geometric
thickness, and can be used to produce graph drawings with small convex
antithickness. 

\citet{DPW04} proved that every $k$-queue $c$-colourable graph
has a $(2k,c)$-track layout. Thus \thmref{kt-Track} implies:

\begin{cor}
  \corlabel{QueueColour2ConvexAntithickness} Every $k$-queue
  $c$-colourable graph $G$ has convex
  antithickness at most $2kc\ln(2c)$. 
\end{cor}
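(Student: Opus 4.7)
The plan is essentially a direct composition of two previously cited results, so there is no substantive obstacle; the proof should be a single line. First I would invoke the result of \citet{DPW04} stated immediately before the corollary: every $k$-queue $c$-colourable graph admits a $(2k,c)$-track layout. This converts the queue/chromatic hypothesis into a track-layout hypothesis with track parameters $(2k,c)$.

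Second, I would feed this $(2k,c)$-track layout into \thmref{kt-Track} with the substitutions $k \mapsto 2k$ and $t \mapsto c$. That theorem yields a convex drawing of $G$ of antithickness at most $(2k)\cdot c\cdot \ln(2c) = 2kc\ln(2c)$, which is exactly the bound claimed in the corollary.

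The only thing to double-check is that the two results chain together without any hidden hypothesis mismatch: the DPW04 statement produces a track layout in which the number of edge colours is $2k$ and the number of tracks is $c$, matching the ``$(k,t)$-track graph'' input format of \thmref{kt-Track}. Since \thmref{kt-Track} is stated for arbitrary $(k,t)$-track graphs, no extra work is required. I expect the write-up to fit in one or two sentences.
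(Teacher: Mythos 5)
Your proposal is exactly the paper's argument: the corollary is derived by combining the cited result of \citet{DPW04} (every $k$-queue $c$-colourable graph has a $(2k,c)$-track layout) with \thmref{kt-Track} applied with parameters $2k$ and $c$, giving the bound $2kc\ln(2c)$. The chaining is correct and no hidden hypotheses are missed.
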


\citet{DujWoo04} proved that every
$k$-queue graph $G$ is $4k$-colourable, and thus has a $(2k,4k)$-track
layout. Thus \thmref{kt-Track} implies:

\begin{cor}
  \corlabel{Queue} Every $k$-queue graph $G$ has convex antithickness at most $8k^2\ln(8k)$. 
\end{cor}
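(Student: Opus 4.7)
The plan is to chain two previously cited results together and substitute into \thmref{kt-Track}. First, I would invoke the theorem of \citet{DujWoo04} stating that every $k$-queue graph $G$ is $4k$-colourable. This gives a proper vertex colouring of $G$ with $c := 4k$ colours, which is precisely the hypothesis needed by \corref{QueueColour2ConvexAntithickness}.

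Next, I would apply \corref{QueueColour2ConvexAntithickness} with this value $c=4k$. The corollary guarantees that every $k$-queue $c$-colourable graph has convex antithickness at most $2kc\ln(2c)$. Substituting $c=4k$ yields the bound
\[
2k \cdot (4k) \cdot \ln(2 \cdot 4k) \;=\; 8k^2 \ln(8k),
\]
which is exactly the claimed bound. (Equivalently, one can first combine the \citet{DujWoo04} colouring bound with the \citet{DPW04} result that every $k$-queue $c$-colourable graph has a $(2k,c)$-track layout to conclude that every $k$-queue graph has a $(2k,4k)$-track layout, and then apply \thmref{kt-Track} directly with parameters $(2k,4k)$ to obtain the same numerical bound.)

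There is no real obstacle here; this corollary is a straightforward composition of already-proved facts, and the only thing to check is the arithmetic of the substitution. No new combinatorial argument is needed, so the proof should be just two or three lines referencing the \citet{DujWoo04} colouring bound and \corref{QueueColour2ConvexAntithickness}.
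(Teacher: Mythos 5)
Your proposal is correct and matches the paper's argument: the paper likewise combines the $4k$-colourability of $k$-queue graphs from \citet{DujWoo04} with the $(2k,4k)$-track layout and \thmref{kt-Track} (of which \corref{QueueColour2ConvexAntithickness} is just the packaged form), and the substitution $2k\cdot 4k\cdot\ln(8k)=8k^2\ln(8k)$ is exactly as you state.
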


A graph is \emph{series parallel} if it has no $K_4$-minor.

\begin{thm}
  Every series parallel graph $G$ has convex antithickness at most $18$.
\end{thm}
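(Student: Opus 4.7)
The plan is to reduce to the track-layout machinery of this section. Specifically, I would exhibit a $(6,3)$-track layout of every series parallel graph and then apply \lemref{Track}\aaa\ with $k=6$ to conclude convex antithickness at most $3\cdot 6 = 18$. An alternative route would be to exhibit a $(1,7)$-track layout and apply \thmref{kt-Track}, since $7\ln 14 < 19$ and convex antithickness is an integer; either target yields the desired bound.

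To construct a $(6,3)$-track layout I would induct on the recursive structure of series parallel graphs. A two-terminal series parallel graph with poles $(s,t)$ is either the single edge $st$, a series composition of two such graphs (identifying one pole of each summand), or a parallel composition (identifying both pairs of poles). Since series parallel graphs are $3$-chromatic, a proper $3$-colouring defines the three tracks. The inductive invariant would be that each pole occupies an extreme position on its track, so that compositions can be performed without disturbing the rest of the layout: for a parallel composition, interleave the interiors of the two summands on the three tracks while preserving the pole positions; for a series composition, concatenate the track orderings after identifying the shared pole. The technical content is extending the $6$-edge-colouring across a composition, where the colour classes of one summand may need to be permuted so that no new monochromatic X-crossing is introduced across the join.

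The main obstacle is controlling the edge colouring at a parallel composition: edges of both summands incident to the shared poles $s,t$ can potentially form X-crossings across each of the $\binom{3}{2}=3$ pairs of tracks. A natural allocation is two colours per track-pair, for a total of $2\cdot 3 = 6$; the heart of the argument would be verifying that a suitable colour permutation on the six colours always exists when reconciling the two summands, so that the two existing $6$-colourings together with the new cross-edges form a $6$-colouring free of monochromatic X-crossings. Once the $(6,3)$-track layout is established, \lemref{Track}\aaa\ immediately gives convex antithickness at most $18$.
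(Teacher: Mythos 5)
Your endgame is the same as the paper's: obtain a $(6,3)$-track layout of $G$ and invoke \lemref{Track}\aaa\ with $k=6$. But where the paper gets the $(6,3)$-track layout for free---$G$ is $3$-colourable, $G$ has a $3$-queue layout by \citet{RM-COCOON95} (see \citep{DujWoo-TR-02-03}), and every $k$-queue $c$-colourable graph has a $(2k,c)$-track layout by \citet{DPW04}---you propose to build it directly by induction on the two-terminal series-parallel decomposition, and that construction is not carried out. You yourself flag the crucial step (``verifying that a suitable colour permutation on the six colours always exists'') as the heart of the argument and leave it unproven, so as it stands this is a plan rather than a proof.

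Moreover, the analysis of where monochromatic X-crossings can arise in your construction is too narrow. At a parallel composition you only worry about edges incident to the shared poles, but if you \emph{interleave} the interiors of the two summands on the three tracks, then an internal edge of one summand and an internal edge of the other, joining the same pair of tracks, can form an X-crossing even though neither touches a pole; and if instead you place the interiors consecutively (to avoid this), then an edge from an extreme pole into the far summand X-crosses internal edges of the near summand on the same track pair. Either way the bookkeeping of which crossings must be separated by the six colours is global to the composition, not local to the poles, and it is not at all clear that a fixed budget of six colours survives arbitrarily iterated compositions; the clean way to certify that six colours suffice is exactly the queue-layout route the paper takes. Your fallback suggestion of a $(1,7)$-track layout is also unsupported: a track-number bound of $7$ for series-parallel graphs is not established in this paper or in the cited literature, so it cannot be assumed. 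To repair the proof, either cite the $3$-queue layout of series-parallel graphs and the queue-to-track conversion of \citet{DPW04}, or supply the full inductive invariant and edge-colouring argument you sketched.
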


\begin{proof} 
  It is well known that $G$ is 3-colourable. \citet{RM-COCOON95}  proved that $G$ has a $3$-queue layout; see \citep{DujWoo-TR-02-03} for an  alternative proof. By the  above-mentioned result of \citet{DPW04}, $G$ has a  $(6,3)$-track layout.  The result follows from \lemref{Track}(a).
\end{proof}

\citet{DPW04} proved that queue-number and track-number are tied. 
Thus \lemref{QNCAT} and  \corref{Queue} imply:

\begin{thm}
  \thmlabel{ThreeTied} 
  Queue-number, track-number and convex antithickness are tied.
\end{thm}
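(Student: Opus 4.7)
The plan is to chain together the three pairwise inequalities already established in the excerpt and invoke transitivity of the ``tied'' relation. Concretely, I would prove the theorem in two steps: first that queue-number and convex antithickness are tied, and then that convex antithickness is tied to track-number via the cited result of \citet{DPW04}.

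For the first step, \lemref{QNCAT} already gives one direction: $\qn{G} \leq \text{convex antithickness}(G)$ for every graph $G$, which means queue-number is bounded by convex antithickness (with the identity as binding function). In the reverse direction, \corref{Queue} states that every $k$-queue graph has convex antithickness at most $8k^2\ln(8k)$, so the binding function $g(k) = 8k^2\ln(8k)$ witnesses that convex antithickness is bounded by queue-number. Together these two bounds show that queue-number and convex antithickness are tied.

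For the second step, I would simply quote the result of \citet{DPW04} that queue-number and track-number are tied. Since the relation of being tied is transitive (as noted in \secref{Conjectures}), the fact that convex antithickness is tied to queue-number, and queue-number is tied to track-number, immediately yields that convex antithickness and track-number are also tied, and hence all three parameters are pairwise tied.

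There is no real obstacle; this theorem is purely a packaging result that combines \lemref{QNCAT}, \corref{Queue}, and the external theorem of \citet{DPW04} through transitivity. The only care needed is in pointing out explicitly which binding functions witness each of the three containments, so that the reader sees that the chain of implications is sound, but none of the individual steps requires any new combinatorial argument beyond what has already been developed.
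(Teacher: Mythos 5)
Your proposal is correct and matches the paper's argument exactly: the paper also combines \lemref{QNCAT}, \corref{Queue}, and the theorem of \citet{DPW04} that queue-number and track-number are tied, concluding by transitivity. No differences worth noting.
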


Several upper bounds on convex antithickness immediately follow from known upper bounds on queue-number. In particular,  \citet{DMW05} proved that graphs with bounded treewidth have bounded track-number (see \citep{GLM-CGTA05,Wiechert17} for quantitative improvements to the bounds). Thus \thmref{kt-Track} with $k=1$ implies that graphs with bounded treewidth have bounded  convex antithickness. Improving on a breakthrough by \citet{DFP13}, \citet{Duj15}  proved that $n$-vertex planar graphs have $O(\log n)$ queue-number. More generally, \citet{DMW17} proved that $n$-vertex graphs with Euler genus $g$ have $O(g + \log n)$ queue-number. Since such graphs are $O(\sqrt{g})$-colourable, \corref{QueueColour2ConvexAntithickness} implies a $O(\sqrt{g}(\log g)(g+ \log n))$ upper bound on the convex antithickness. Most generally, for fixed $H$, \citet{DMW17} proved that $H$-minor-free graphs have $\log^{O(1)}n$ queue-number, which implies a $\log^{O(1)}n$ bound on the convex antithickness (since such graphs are $O(1)$-colourable).

\section{Thickness and Antithickness of a Drawing}\seclabel{ColouringDrawing}

This section considers the problem of determining the thickness or
antithickness of a given drawing of a graph. We employ the following
standard terminology. For a graph $G$, a \emph{clique} of $G$ is a set
of pairwise adjacent vertices of $G$.  The \emph{clique number} of
$G$, denoted by $\omega(G)$, is the maximum number of vertices in a
clique of $G$.  The \emph{clique-covering number} of $G$, denoted by
$\sigma(G)$ is the minimum number of cliques that partition $V(G)$.
An \emph{independent set} of $G$ is a set of pairwise nonadjacent
vertices of $G$.  The \emph{independence number} of $G$, denoted by
$\alpha(G)$, is the maximum number of vertices in an independent set of
$G$.  The \emph{chromatic number} of $G$, denoted by $\chi(G)$, is the
minimum number of independent sets that partition $V(G)$.  Obviously
$\chi(G)\geq\omega(G)$ and $\sigma(G)\geq\alpha(G)$ for every graph
$G$. Let \F\ be a family of graphs. \F\ is \emph{$\chi$-bounded} if
$\chi$ is bounded by $\omega$ in \F, and \F\ is
\emph{$\sigma$-bounded} if $\sigma$ is bounded by $\alpha$ in \F.

Now let $D$ be a drawing of a graph $G$. Let $k$ be the maximum number
of pairwise crossing edges in $D$, and let $\ell$ be the maximum
number of pairwise disjoint edges in $D$. Then $k$ is a lower bound on
the thickness of $D$, and $\ell$ is a lower bound on the antithickness
of $D$. Our interest is when the thickness of $D$ is bounded from
above by a function of $k$, or the antithickness of $D$ is bounded
from above by a function of $\ell$.

Let $H$ be the graph with $V(H)=E(G)$ such that two vertices of $H$ are adjacent if and only if the corresponding edges cross in $D$.  Let $H^+$ be the graph  with $V(H)=E(G)$ such that two vertices of $H^+$ are adjacent if and only if the corresponding edges cross in $D$ or have an endvertex in common. Note that $H$ is a spanning subgraph of $H^+$.  By definition, the thickness of $D$ equals $\chi(H)$, and the antithickness of $D$ equals $\sigma(H^+)$. 

A \emph{string graph} is the intersection graph of a family of simple curves in the plane; see
\citep{JU17, FP12,FP14,FP10,Mat14,SS-JCSS04} for example. 
  If we consider edges in $D$ as curves, then $H^+$ is a string graph. 
  And deleting a small disc around each vertex in $D$, we see that $H$ is also a string graph.  
 Moreover, if $D$ is geometric, then both $H$ and $H^+$ are intersection graphs of sets of segments in the
plane. If $D$ is convex, then both $H$ and $H^+$ are intersection graphs of sets
of chords of a circle, which is called a \emph{circle graph}; see  \citep{GL-DM85,Kostochka04} for example. 
If $D$ is a $2$-track drawing, then both $H$ and $H^+$ are permutation graphs, which are perfect; see \citep{Golumbic80} for example.

Whether the thickness / antithickness of an (unrestricted) drawing is bounded by the maximum number of pairwise crossing / disjoint edges is equivalent to whether string graphs are $\chi$-bounded / $\omega$-bounded. Whether the thickness / antithickness of a geometric drawing is bounded by the maximum number of pairwise crossing / disjoint edges is equivalent to whether intersection graphs of segments are $\chi$-bounded / $\omega$-bounded. For many years both these were open; see \citep{KN-EJC02, KN-EJC98}. However, in a recent breakthrough, \citet{Pawlik13,Pawlik14} constructed set of segments in the plane, whose intersection graph is triangle-free and with unbounded chromatic number. Thus the thickness of a drawing is not bounded by any function of the maximum number of pairwise crossing edges, and this remains true in the geometric setting. 

For convex drawings, more positive results are known. \citet{Gyarfas-DM85,Gyarfas-DM86} proved that the family of circle graphs is $\chi$-bounded, the best known bound being $\chi(H)<21\cdot2^{\omega(H)}$ due to \citet{Cerny07} (slightly improving an earlier bound by \citet{KK-DM97}). This implies that a convex drawing with at most $k$ pairwise crossing edges has thickness
less than $21\cdot 2^{k}$. For small values of $k$ much better bounds are known \citep{Ageev-DM96, Ageev-DM99}. \citet{Kostochka88} proved that $\sigma(H)\leq(1+o(1))\alpha(H)\log\alpha(H)$ for every circle graph $H$; also see \citep{KK-DM97}. Thus a convex drawing with at most $k$ pairwise disjoint edges has antithickness at most $(1+o(1))k\log k$.

Now consider a 2-track drawing $D$. Then $H$ and $H^+$ are permutation graphs, which are perfect. Thus $\chi(H)=\omega(H)$ and $\sigma(H^+)=\alpha(H^+)$. This says that if $D$ has at most $k$ pairwise crossing edges, and at most $\ell$ pairwise disjoint edges, then $D$ has thickness at most $k$ and antithickness at most $\ell$. There is a very simple algorithm for computing these partitions. First we compute the partition into $\ell$ 2-track thrackles. For each edge $vw$, if $\{x_1y_1,x_2y_2,\dots,x_iy_i\}$ is a set of maximum size of pairwise disjoint edges such that $x_1<x_2<\dots<x_i<v$ in one layer and $y_1<y_2<\dots<y_i<w$ in the other layer, then assign $vw$ to the ($i+1$)-th set. Consider two disjoint edges $v_1w_1$ and $v_2w_2$. Without loss of generality, $v_1<v_2$ and $w_1<w_2$. Suppose that by the above rule $v_1w_1$ is assigned to the ($i+1$)-th set and $v_2w_2$ is assigned to the ($j+1)$-th set. Let $\{x_1y_1,\dots,x_iy_i\}$ be a set of maximum size of pairwise disjoint edges such
that $x_1<x_2<\dots<x_i<v_1$ in one layer and $y_1<y_2<\dots<y_i<w_1$ in the other layer. Then $\{x_1y_1,\dots,x_iy_i,v_1w_1\}$ 
is a set of pairwise disjoint edges such that $x_1<x_2<\dots<x_i<v_1<v_2$ in one layer and $y_1<y_2<\dots<y_i<w_1<w_2$ in the other layer. Thus $j\geq i+1$. That is, two edges that are both assigned to the same set are not disjoint, and each such set is a 2-track thrackle. In the above rule, $i\leq\ell-1$. Thus this procedure partitions the edges into $\ell$ 2-track thrackles. To partition the
edges into $k$ 2-track noncrossing subdrawings, simply reverse the order of the vertices in one track, and apply the above procedure.

Consider the analogous question for queue layouts: Given a fixed vertex ordering $\pi$ of a graph $G$, determine the minimum value $k$ such that $\pi$ admits a $k$-queue layout of $G$. We can again construct an auxillary graph $H$ with $V(H)=E(G)$, where two vertices are adjacent if and only if the corresponding edges of $G$ are nested in $\pi$. Then $\pi$ admits a $k$-queue layout of $G$ if and only if $\chi(H)\leq k$. A classical result by \citet{DM41} implies that $H$ is a permutation graph, and is thus perfect. Hence $\chi(H)=\omega(H)$. A clique in $H$ corresponds to a set of edges of $G$ that are pairwise nested in $\pi$, called a \emph{rainbow}. Hence $\pi$ admits a $k$-queue layout of $G$ if and only if $\pi$ has no ($k+1$)-edge rainbow, which was also proved by \citet{HR92}.  \citet{DujWoo04} observed the following simple way to assign edges to queues: if the maximum number of edges that are pairwise nested inside an edge $e$ is $i$, then assign $e$ to the ($i+1$)-th queue.

This procedure can also be used to prove that a convex drawing with at
most $k$ pairwise disjoint edges has antithickness at most
$(1+o(1))k\log k$. This is equivalent to the result of \citet{Kostochka88} mentioned
above. Let $\pi$ be any vertex ordering obtained from the order of the
vertices around the convex hull. Then $\pi$ has no ($k+1$)-edge
rainbow. Assign edges to $k$ queues as described at the end of the previous paragraph. Partition the
$i$-th queue into sets of pairwise non-disjoint edges as follows. For
each edge $e$ in the $i$-th queue, if the maximum number of pairwise
disjoint edges with both end-vertices to the left of the left end-vertex of $e$ is $j$, then assign $e$ to the ($j+1$)-th set. Thus
two edges in the $i$-th queue that are both assigned to the same set
are not disjoint. Let $S$ be a maximum set of pairwise disjoint edges
in the $i$-th queue. Then $j\leq|S|-1$. Thus the $i$-th queue can
be partitioned into $|S|$ sets of pairwise non-disjoint edges. Now we
bound $|S|$. Under each edge in $S$ is an $(i-1)$-edge rainbow. This
gives a set of $|S|\cdot i$ edges that are pairwise disjoint. Thus
$|S|\cdot i\leq k$ and $|S|\leq\floor{k/i}$. Thus we can partition the
$i$-th queue into at most $\floor{k/i}$ sets of pairwise non-disjoint
edges. In total we have at most $\sum_{i=1}^k\floor{k/i}$ sets, each
with no two disjoint edges, which is less than $k(1+\ln k)$. Loosely
speaking, this proof shows that a convex drawing and an associated
edge-partition into convex thrackles can be
thought of as a combination of a queue layout and an arch layout; see
\citep{DujWoo04} for the definition of an arch layout.



\section{Thickness and Antithickness are Tied}\seclabel{Tied}

The \emph{arboricity} of a graph $G$ is the minimum number of forests
that partition $E(G)$. \citet{NW-JLMS64} proved that the arboricity of
$G$ equals
\begin{equation}
  \eqnlabel{Arboricity}
  \max_{H\subseteq G}\CEIL{\frac{|E(H)|}{|V(H)|-1}}\enspace.
\end{equation}

We have the following connection between thickness, antithickness, and
arboricity.

\begin{thm}
  \thmlabel{ThicknessTied} Thickness, antithickness, and arboricity
  are pairwise tied. In particular, for every graph $G$ with thickness
  $t$, antithickness $k$, and arboricity $\ell$,
$$k\leq\ell\text{ and }\frac{k}{3}\leq t\leq\ell\leq\CEIL{\frac{3k}{2}}.$$
\end{thm}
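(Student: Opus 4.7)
The plan is to chain together three routine inequalities with one structural estimate: the Cairns--Nikolayevsky bound that every $n$-vertex thrackleable graph has at most $\frac{3(n-1)}{2}$ edges. Besides this I will use the Nash--Williams formula \eqnref{Arboricity}, Woodall's theorem that every tree (and, by restriction, every forest) is thrackleable, and the classical observation that every planar graph has arboricity at most $3$ (immediate from $|E(G)| \leq 3|V(G)|-6$ combined with \eqnref{Arboricity}).

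First I would dispense with the two bounds by $\ell$. Partitioning $E(G)$ into $\ell$ forests and regarding each forest as a planar subgraph gives $t \leq \ell$. For $k \leq \ell$, every forest $F$ embeds as a subgraph of a tree (add edges between components); Woodall's thrackled drawing of that tree restricts to a thrackled drawing of $F$, so the same partition into $\ell$ forests certifies antithickness at most $\ell$.

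The substantive step is $\ell \leq \CEIL{3k/2}$. By \eqnref{Arboricity} it suffices to show $|E(H)| \leq \frac{3k}{2}(|V(H)|-1)$ for every subgraph $H$ of $G$. Fix such an $H$. By \corref{AntithicknessCharacterisation}, $G$ admits a drawing of antithickness $k$; its restriction to $H$ partitions $E(H)$ into $k$ thrackleable subgraphs $T_1,\dots,T_k$, since deleting edges from a thrackle preserves the thrackle property. Applying the Cairns--Nikolayevsky bound to each $T_j$ (which has at most $|V(H)|$ non-isolated vertices) yields $|E(T_j)| \leq \frac{3(|V(H)|-1)}{2}$, and summing over $j$ gives the desired estimate, from which $\CEIL{|E(H)|/(|V(H)|-1)} \leq \CEIL{3k/2}$.

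Finally, for $k/3 \leq t$, a drawing with thickness $t$ decomposes $E(G)$ into $t$ planar subgraphs, each of arboricity at most $3$, so $\ell \leq 3t$; combined with $k \leq \ell$ established above, this gives $k \leq 3t$. The only non-routine ingredient is the Cairns--Nikolayevsky estimate, which controls the precise constant $3/2$ in the final bound; any improvement towards Conway's thrackle conjecture would automatically tighten the tie between arboricity and antithickness.
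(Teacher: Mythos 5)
Your proof is correct and follows essentially the same route as the paper: forests are planar and (by Woodall) thrackleable, giving $t\leq\ell$ and $k\leq\ell$; the Nash--Williams formula plus the Cairns--Nikolayevsky bound on thrackle size gives $\ell\leq\CEIL{3k/2}$; and $k\leq 3t$ comes from planar graphs having arboricity at most $3$. The only cosmetic difference is that you route $k\leq 3t$ through the intermediate inequality $\ell\leq 3t$, whereas the paper passes directly to a partition into $3t$ thrackleable subgraphs --- the underlying argument is identical.
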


\begin{proof}
  Every forest is planar. Thus a partition of $G$ into $\ell$ forests
  is also a partition of $G$ into $\ell$ planar subgraphs. Thus
  $t\leq\ell$.

  \citet{Woodall-Thrackles} proved that every forest is
  thrackeable. Thus a partition of $G$ into $\ell$ forests is also a
  partition of $G$ into $\ell$ thrackeable subgraphs. Thus
  $k\leq\ell$.

  Every planar graph $G$ has arboricity at most $3$ by
  \eqnref{Arboricity} and since $|E(G)|\leq3|V(G)|-6$.  (Indeed, much more is known about edge-partitions of planar graphs into three forests~\citep{Schnyder-Order89}.)\   Since every  forest is thrackeable \citep{Woodall-Thrackles}, every planar graph
  has antithickness at most $3$.  Thus a partition of $G$ into $t$
  planar subgraphs gives a partition of $G$ into $3t$ thrackeable
  subgraphs.  Thus $k\leq3t$.

  It remains to prove that $\ell\leq\CEIL{\frac{3}{2}k}$. By
  \eqnref{Arboricity}, it suffices to show that
  $\frac{m}{n-1}\leq\frac{3k}{2}$ for every subgraph $H$ of $G$ with
  $n$ vertices and $m$ edges.  \citet{CN-DCG00} proved that every
  thrackle has at most $\frac{3}{2}(n-1)$ edges. Since every subgraph
  of a thrackle is a thrackle, $H$ has antithickness at most $k$, and
  $m\leq\frac{3}{2}k(n-1)$, as desired.
\end{proof}

\thmref{ThicknessTied} with $k=1$ implies that every thrackle has arboricity at most $2$. It is an open problem whether every thrackle is planar. It follows from a result by \citet{CN09} that every thrackle has a crossing-free embedding in the projective plane. Also note that the constant $\frac{3}{2}$ in \thmref{ThicknessTied} can be improved to $\frac{167}{117}$ using the result of \citet{FP11}. 

\section{Separating Convex Antithickness and Geometric Thickness}\seclabel{Separating}

As discussed in \secref{Conjectures}, the following lemma is a key step in showing that convex antithickness and geometric antithickness are separated. Recall that $K_n'$ is the graph obtained from $K_n$ by subdividing each edge exactly once. 

\begin{lem}
\lemlabel{GeomAntiCompleteSubdiv}
$K_n'$ has geometric antithickness $2$.
\end{lem}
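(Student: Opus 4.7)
The plan is to show a matching lower bound and a matching upper bound. For the lower bound, I will observe that whenever $n\geq 4$ the graph $K_n'$ contains a $2$-claw as a subgraph: taking $v_1$ as the centre, $x_{12},x_{13},x_{14}$ as the three middle vertices, and $v_2,v_3,v_4$ as the three leaves, the six relevant edges $v_1x_{12},v_1x_{13},v_1x_{14},x_{12}v_2,x_{13}v_3,x_{14}v_4$ all belong to $K_n'$. Since every subgraph of a geometric thrackle is a geometric thrackle, and by \lemref{TwoClawGeometricThickness} the $2$-claw is not a geometric thrackle, $K_n'$ itself cannot be drawn as a single geometric thrackle, so its geometric antithickness is at least $2$.

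For the upper bound I will construct a straight-line drawing of $K_n'$ together with a $2$-colouring of its edges in which each colour class forms a thrackle. The colouring I will use is the natural one suggested by the subdivision structure: for $i<j$, colour the edge $v_ix_{ij}$ red and colour the edge $x_{ij}v_j$ blue. Under this colouring the red subgraph is a disjoint union of stars $S_1,\dots,S_{n-1}$, where $S_i$ has centre $v_i$ and leaves $\{x_{ij}:j>i\}$, and the blue subgraph is analogously a disjoint union of stars with centres $v_2,\dots,v_n$. Within any single star, its edges emanate as straight segments from a common centre and therefore meet only there, so the thrackle condition holds automatically within each star. The remaining and only nontrivial requirement is that any two edges of the same colour coming from different stars cross transversally as straight segments.

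I will place the original vertices $v_1,\dots,v_n$ on the $x$-axis at prescribed positions, and each subdivision vertex $x_{ij}$ at a carefully chosen point in the plane whose coordinates depend explicitly on $i$ and $j$. The verification then reduces to a direct pair-by-pair geometric check that every required pair of straight segments does cross. The hard part is designing the placement of the $x_{ij}$'s. A first attempt of placing all the $x_{ij}$'s on a single second line parallel to the $x$-axis (that is, as a $2$-track layout) does not succeed: the red crossing condition forces the left-to-right ordering of the $x_{ij}$'s on that line to be controlled by the first index $i$, while the blue crossing condition forces it to be controlled by the second index $j$, and these two orderings already conflict on the pair $(i,j)=(1,4)$ versus $(p,q)=(2,3)$. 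This one-dimensional obstruction is precisely what powers the $\Omega(\sqrt{n})$ lower bound on the convex antithickness of $K_n'$ proved at the end of the section. To bypass it, I will exploit the extra degree of freedom that the full plane affords over a single auxiliary line, positioning the $x_{ij}$'s in a genuinely two-dimensional region so that the red and blue crossing constraints decouple and can be satisfied simultaneously; the step I expect to consume the most care is writing down explicit coordinates for the $x_{ij}$'s and verifying that every mixed-star pair of red segments, and every mixed-star pair of blue segments, indeed crosses.
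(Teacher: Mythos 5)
Your lower bound is fine (and is a nice touch: the paper's proof does not spell it out), and your colouring of the edges of $K_n'$ by which original vertex they touch is exactly the colouring the paper uses. But the upper bound, which is the entire content of the lemma, is not actually proved: you correctly identify that the whole difficulty is making every pair of same-coloured edges from different stars cross, you correctly explain why a one-dimensional (convex/2-track) placement cannot work, and then you stop at the point where the construction would have to begin. ``Positioning the $x_{ij}$'s in a genuinely two-dimensional region so that the red and blue crossing constraints decouple'' is a statement of intent, not an argument; no coordinates are given and no crossing is verified. It is not at all obvious a priori that such a placement exists --- indeed your own observation that the constraints conflict on a line shows the existence claim needs real work --- so the proposal as written has a genuine gap at its core.

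For comparison, the paper resolves exactly this difficulty with a two-stage incremental construction. First the division vertices $x_{i,i+1}$ of the ``consecutive'' edges are placed on a second horizontal line in reversed order, so that all blue segments $v_ix_{i,i+1}$ pass through one common point and all red segments $v_jx_{j-1,j}$ through another; these segments serve as spines. Then each remaining $x_{i,j}$ is inserted, one at a time, into a specific face of the current line arrangement --- the face at the crossing of the blue spine $v_ix_{i,i+1}$ with the red spine $v_jx_{j-1,j}$, with the blue spine on its left and the red spine on its right --- while maintaining two invariants about the order of crossings along the spines. These invariants are what guarantee that each newly drawn blue edge crosses every previously drawn blue edge and likewise for red; they are the mechanism by which the red and blue constraints ``decouple,'' and without them (or some substitute) the pair-by-pair verification you defer to cannot be carried out. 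To complete your proof you would need to supply a concrete placement together with an argument of this kind.
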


\begin{proof}
Let $v_1,\dots,v_n$ be the original vertices of $K_n'$.
Position each $v_i$ at $(2i,0)$.
For $1\leq i<j\leq n$, 
let $x_{i,j}$ be the division vertex of the edge $v_iv_j$; 
colour the edge $v_ix_{i,j}$ \emph{blue}, and
colour the edge $v_jx_{i,j}$ \emph{red}.
Orient each edge of $K_n'$ from the original endvertex to the division endvertex.
This orientation enables us to speak of the order of crossings along an edge.

We now construct a geometric drawing of $K_n'$, such that every pair of blue edges crosses, and every pair of red edges cross. Thus the drawing has antithickness $2$. In addition, the following invariants are maintained for all $i\in[1,n-2]$ and $j\in [i+2,n]$:

(1) No blue edge crosses $v_ix_{i,i+1}$ after the crossing between $v_ix_{i,i+1}$ and $v_jx_{j-1,j}$.

(2) No red edge crosses $v_jx_{j-1,j}$ after the crossing between $v_ix_{i,i+1}$ and $v_jx_{j-1,j}$.



\noindent The drawing is constructed in two stages. First, for $i\in[n-1]$, position $x_{i,i+1}$ at $(2(n-i)+1,1)$, as illustrated in \figref{CompleteSubdiv}.

\Figure{CompleteSubdiv}{\includegraphics{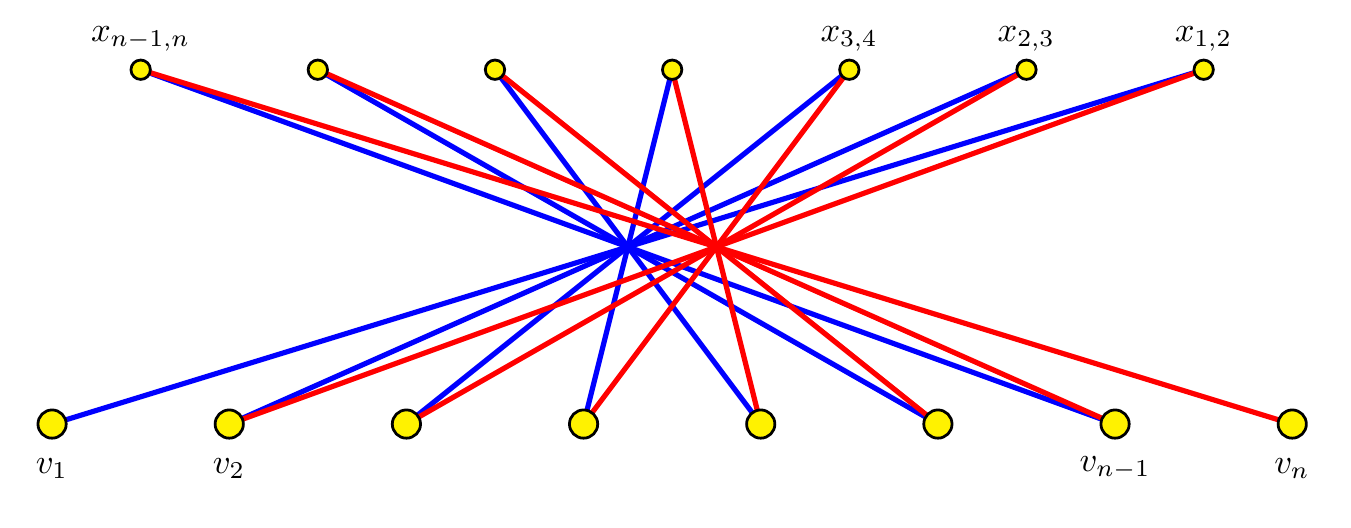}\vspace*{-4ex}}{Initial vertex placement in a geometric drawing of $K_n'$ with antithickness $2$.}

Observe that all the blue segments intersect at $(n+\half,\half)$, and
all the red segments intersect at $(n+\frac32,\half)$. Thus the invariants hold in this subdrawing. Moreover, every blue edge intersects every red edge (although this property will not be maintained).

For $i\in[1,n-2]$ and $j\in[i+2,n]$ (in an arbitrary order) position $x_{i,j}$ as follows. The blue segment $v_ix_{i,i+1}$ and the red segment $v_jx_{j-1,j}$ were drawn in the first stage, and thus cross at some point $c$. In the arrangement formed by the drawing produced so far, let $F$ be the face that contains $c$, such that the blue segment $v_ix_{i,i+1}$ is on the left of $F$, and the red segment $v_jx_{j-1,j}$ is on the right of $F$. Position $x_{i,j}$ in the interior of $F$, as illustrated in \figref{CompleteSubdivDetail}.

\Figure{CompleteSubdivDetail}{\includegraphics{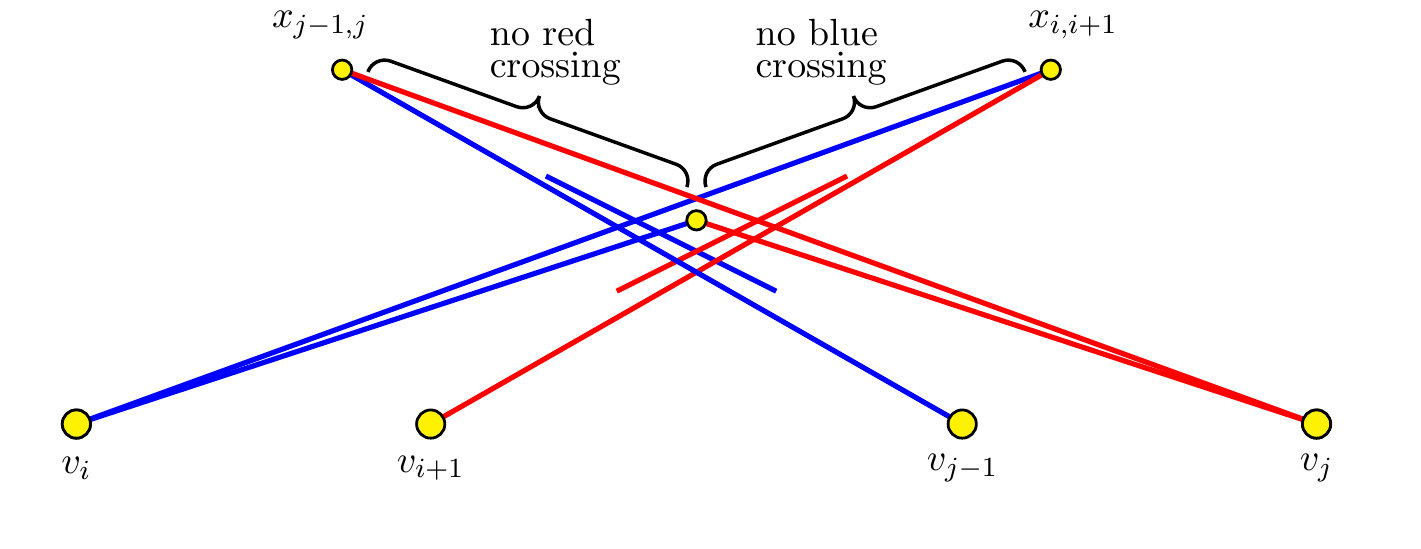}\vspace*{-5ex}}{Placing $x_{i,j}$ where $i\in[1,n-2]$ and $j\in[i+2,n]$.}

By invariant (1), no blue edge crosses $v_ix_{i,i+1}$ after the red edge $v_jx_{j-1,j}$. It follows that the new blue edge $v_ix_{i,j}$ crosses every blue edge already drawn, and invariant (1) is maintained. 
By invariant (2), no red edge crosses $v_jx_{j-1,j}$ after the blue edge $v_ix_{i,i+1}$. It follows that the new red edge $v_jx_{i,j}$ crosses every red edge already drawn, and  invariant (2) is maintained. 
\end{proof}

\citet[Lemma 10]{DujWoo05} proved that $K_n'$ has queue-number at least $\sqrt{n/6}$. Since the queue-number of a graph is at most its convex antithickness (\lemref{QNCAT}), $K_n'$ has convex antithickness at least $\sqrt{n/6}$. This proves the claim in \secref{Introduction} that implies that convex antithickness is not bounded by geometric antithickness.

\section{Extremal Questions}\seclabel{Extremal}

This section studies the maximum number of edges in an $n$-vertex
graph with topological (or geometric or convex or $2$-track) thickness
(or antithickness) $k$. The results are summarised in \tabref{Extremal}.
First we describe results from the literature, followed by our original results.

For book thickness and $2$-track thickness the maximum number of edges
is known.  \citet{BK79} proved that the maximum number of edges in an
$n$-vertex graph with book thickness $k$ equals
$(k+1)n-3k$. \citet{DujWoo04} proved that the maximum number of
edges in an $n$-vertex graph with $2$-track thickness $k$ equals
$k(n-k)$.

Determining the maximum number of edges in a thrackle is a famous open problem proposed by John Conway, who conjectured that every thrackle on $n$ vertices has at most $n$ edges. Improving upon previous bounds by \citet{LPS-DCG97} and \citet{CN-DCG00}, 
\citet{FP11} proved that every thrackle has at most $\frac{167}{117}n$ edges.  Thus every graph with antithickness $k$ has at most $\frac{167}{117}kn$ edges. For $n\geq 2k+1$, it is easy to construct an $n$-vertex graph consisting of $k$ edge-disjoint copies of $C_n$. Thus this graph has antithickness $k$ and $kn$ edges.

Many authors have proved that every geometric thrackle has at most $n$ edges \citep{Woodall-Thrackles,Erdos46,HopfPann34,PA95}. 
Thus every graph with geometric antithickness $k$ has at most $kn$ edges. 
For convex antithickness, \citet{FW12} improved this upper bound to $kn-\binom{k}{2}$, and 
\citet{FJVW} established a matching lower bound. This lower bound is the best known lower bound in the geometric setting. It is an open problem to determine the maximum number of edges in an $n$-vertex graph with geometric antithickness $k$.

We also mention that many authors have considered graph drawings, with at most $k$ pairwise crossing edges or at most $k$ pairwise disjoint edges (instead of thickness $k$ or antithickness $k$). These weaker assumptions allow for more edges. See \citep{CapoPach-JCTB92,KupitzPerles-DCG96,Kupitz-DM84,Cerny-DCG05,AE-DCG89,GKK-EuJC96,TV-DCG99,GKK-EuJC96,Toth-JCTA00,Felsner04}.





\begin{table}
  \caption{\tablabel{Extremal}The maximum number of edges in an $n$-vertex graph with given
    parameter $k$.}
  \begin{center}
    \begin{tabu}{lccr}
      \hline
      parameter  & lower bound & upper bound & reference\\\hline\\[-1ex]
      thickness $k$& $3k(n-2)$ & $3k(n-2)$ & \thmref{ExtremalThickness}\\[0.5ex]
      geometric  thickness $1$& $3n-6$ & $3n-6$ & \\
      geometric  thickness $2$& $6n-20$ & $6n-18$ & \citep{HSV-CGTA99} \\
      geometric  thickness $k$& $k(3n-4k-3)$ & $k(3n-k-5)$ &         \thmref{MaxEdgesGeomThickness} \\[0.5ex]
      book thickness $k$& $(k+1)n-3k$ & $(k+1)n-3k$ & \citep{BK79}\\
      2-track  thickness $k$& $k(n-k)$ & $k(n-k)$ & \citep{DujWoo04}\\[0.5ex]
      \hline\\[-1ex]
      antithickness $1$& $n$ & $\frac{167}{117}\,n$ & \citep{FP11}\\
      antithickness $k$& $kn$ & $\frac{167}{117}\,kn$\\[0.5ex]
      geometric antithickness $1$& $n$ & $n$ &       \citep{Woodall-Thrackles,Erdos46,HopfPann34,PA95}\\
      geometric antithickness $k$& $kn-\binom{k}{2}$ & $kn$ & \\[0.5ex]
      convex antithickness $k$& $kn-\binom{k}{2}$ & $kn-\binom{k}{2}$ & \citep{FW12,FJVW}\\
      2-track antithickness $k$& $k(n-k)$ & $k(n-k)$ & \citep{DujWoo04}\\[0.5ex]
      \hline
    \end{tabu}
  \end{center}
\end{table}

\subsection{Thickness} Since every planar graph with $n\geq3$ vertices has at most $3(n-2)$ edges, every graph with $n\geq3$ vertices and thickness $k$ has at most $3k(n-2)$ edges.  We now prove a lower bound.


\begin{thm}
  \thmlabel{ExtremalThickness} For all $k$ and infinitely many $n$
  there is a graph with $n$ vertices, thickness $k$, and exactly
  $3k(n-2)$ edges.
\end{thm}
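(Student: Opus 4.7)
The upper bound is immediate: in any thickness-$k$ edge partition $E(G)=E_1\cup\cdots\cup E_k$, each $E_i$ spans a planar graph on $n\ge 3$ vertices and so has at most $3(n-2)$ edges by Euler's formula.

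For the matching lower bound, the plan is to construct, for each $k$ and for infinitely many $n$, an $n$-vertex graph $G$ whose edges partition into $k$ pairwise edge-disjoint spanning triangulations $T_1,\dots,T_k$ of the common vertex set $V$. Such a $G$ has exactly $3k(n-2)$ edges, thickness at most $k$ by the exhibited decomposition, and thickness at least $k$ since the planar edge bound forces any thickness-$t$ decomposition of $G$ to satisfy $|E(G)|\le 3t(n-2)$. The task thus reduces to producing $k$ pairwise edge-disjoint spanning triangulations on a common vertex set of size $n$, for infinitely many $n$.

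I would construct these inductively on $k$. The base case $k=1$ is any maximal planar graph on $n\ge 3$ vertices. For the inductive step, suppose pairwise edge-disjoint spanning triangulations $T_1,\dots,T_k$ of a vertex set $V_k$ with $|V_k|=n_k$ exist. Adjoin a large new vertex set $U$ to form $V_{k+1}=V_k\cup U$. Extend each $T_i$ to a spanning triangulation $T_i'$ of $V_{k+1}$ by distributing the vertices of $U$ among faces of $T_i$ (choosing the insertion pattern differently for each $i$, so that the ``chords'' from $U$ to $V_k$ introduced by distinct $T_i'$ use disjoint edges of $V_k\times U$) and triangulating locally. Finally, build an additional spanning triangulation $T_{k+1}$ of $V_{k+1}$ using only edges not appearing in any $T_i'$.

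The main obstacle is this last step: constructing $T_{k+1}$ while avoiding every edge used by the extensions $T_i'$. The key leverage is that $|U|$ may be taken arbitrarily large relative to $k$ and $n_k$, so the $T_i'$ collectively forbid only $O(k|U|+kn_k)$ edges while the complement in $K_{V_{k+1}}$ has $\Theta(|U|^2)$ available edges. A concrete strategy: first triangulate $U$ using only permitted edges (straightforward since only $O(kn_k)$ edges within $U$ are forbidden and $U$ is large), then insert each vertex $v\in V_k$ into a distinct face of this triangulation and connect it to the three face-vertices via edges not used by any $T_i'$ (feasible because each $v$ has only $O(k)$ forbidden neighbours in $U$). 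Since the inductive step works for every sufficiently large $|U|$, varying $|U|$ yields infinitely many valid values of $n=n_{k+1}$ at each level of the induction, completing the proof.
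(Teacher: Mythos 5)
Your reduction is sound and is the same as the paper's: both arguments seek $k$ pairwise edge-disjoint edge-maximal planar graphs on a common vertex set of size $n$, whose union then has exactly $3k(n-2)$ edges and hence thickness exactly $k$ by the planar edge bound. The difference lies in how the edge-disjoint triangulations are produced, and it is there that your argument has a genuine gap.

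The problem is the step you call ``straightforward'': finding a spanning triangulation of $U$ (and then attaching the vertices of $V_k$) that avoids a prescribed set of forbidden edges. Making $U$ large does not help in the way you suggest, because a triangulation of $U$ is itself sparse --- it uses only $3|U|-6$ of the $\binom{|U|}{2}$ available pairs --- so the fact that the forbidden set is a vanishing fraction of all pairs does not imply that some triangulation lies entirely in the complement. Quantitatively, if you take a fixed maximal planar graph on $U$ and relabel its vertices by a uniformly random permutation, the expected number of its edges landing in a forbidden set of size $\Theta(k|U|)$ is $\Theta(k)$, a constant that does not tend to zero, so the naive first-moment argument fails; one needs either an explicit construction or a more delicate probabilistic or switching argument. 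One also needs degree control on the forbidden graph (if all but two edges at some vertex were forbidden, no avoiding triangulation could exist, since a maximal planar graph on at least four vertices has minimum degree three), and your extensions $T_i'$ do not obviously provide this: when $|U|>2n_k-4$ some face of each $T_i$ must absorb many vertices of $U$, creating many $U$--$U$ edges whose pairwise disjointness across the different $i$ is itself an unproved instance of the same sparse-avoids-sparse problem. This is exactly the difficulty the paper sidesteps by working with one explicit edge-maximal planar graph (the nested-triangles graph on $3n$ vertices with $n$ prime and $n>3k^2$) and exhibiting $k$ explicit bijections $f_p$, given by multiplication by $p$ together with shifts modulo $n$, whose images are shown to be pairwise edge-disjoint by a direct congruence check. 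Your plan could plausibly be repaired, but the avoidance lemma is the heart of the matter and requires an actual proof rather than an appeal to $|U|$ being large.
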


Let $G$ be a graph. Let $f$ be a bijection of $V(G)$. Let $G^f$ be the
graph with vertex set $V(G^f)=V(G)$ and edge set
$E(G^f)=\{f(vw):vw\in E(G)\}$, where $f(vw)$ is an abbreviation for the edge $f(v)f(w)$. 
Bijections $f_1$ and $f_2$ of $V(G)$ are \emph{compatible} if $G^{f_1}$ and $G^{f_2}$ are
edge-disjoint. By taking a union, the next lemma implies   \thmref{ExtremalThickness}. 

\begin{lem}
  \lemlabel{CompatibleBijections} For each integer $k\geq 1$ there are 
  infinitely many edge-maximal planar graphs that admits $k$ pairwise compatible  bijections.
\end{lem}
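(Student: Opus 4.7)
Plan. The bijections $f_1,\ldots,f_k$ are pairwise compatible if and only if $G^{f_1},\ldots,G^{f_k}$ are $k$ pairwise edge-disjoint copies of $G$ sharing the vertex set $V(G)$; equivalently, $K_{|V(G)|}$ contains $k$ edge-disjoint copies of the triangulation $G$. So I view the lemma as a graph packing problem, and combine (i) an explicit infinite family of triangulations of uniformly bounded maximum degree with (ii) an iterated application of the Sauer--Spencer packing theorem.

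Bounded-degree triangulations. Starting from the icosahedron $I_0$ (a triangulation on $12$ vertices, every vertex of degree $5$), define $I_{\ell+1}$ as the \emph{geodesic subdivision} of $I_\ell$: for each triangular face $uvw$ of $I_\ell$, add midpoints $m_{uv},m_{vw},m_{wu}$ and replace the face by the four triangles $\{u,m_{uv},m_{wu}\}$, $\{v,m_{uv},m_{vw}\}$, $\{w,m_{vw},m_{wu}\}$ and $\{m_{uv},m_{vw},m_{wu}\}$. A direct count gives $|V(I_{\ell+1})|=|V(I_\ell)|+|E(I_\ell)|=4|V(I_\ell)|-6$, so $|V(I_\ell)|\to\infty$, and each $I_\ell$ is a (planar) triangulation. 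Moreover, each inherited vertex keeps its previous degree, while each new midpoint $m_{uv}$ has exactly six neighbours ($u$, $v$, and the four midpoints of the other two sides of the two faces incident to $uv$). Hence $\Delta(I_\ell)\leq 6$ for every $\ell\geq 1$.

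Iterative packing. Recall the Sauer--Spencer theorem: any two $n$-vertex graphs $H_1,H_2$ with $\Delta(H_1)\Delta(H_2)<n/2$ pack into $K_n$. Fix $k$ and choose $\ell$ so large that $n:=|V(I_\ell)|>72(k-1)$. I place $k$ edge-disjoint copies of $I_\ell$ in $K_n$ one at a time. Having placed $j<k$ copies, their union $H^{(j)}\subseteq K_n$ satisfies $\Delta(H^{(j)})\leq 6j$, so $\Delta(I_\ell)\cdot \Delta(H^{(j)})\leq 36j<n/2$, and Sauer--Spencer yields a bijection $\phi_{j+1}$ of $V(I_\ell)$ with $\phi_{j+1}(E(I_\ell))\cap E(H^{(j)})=\emptyset$. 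Iterating $k$ times produces $k$ pairwise edge-disjoint copies of $I_\ell$ in $K_n$, that is, $k$ pairwise compatible bijections $\phi_1,\ldots,\phi_k$ of $V(I_\ell)$, as required for infinitely many $n$.

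Main obstacle. The only substantive difficulty is ensuring that the host graph stays sparse enough after a few copies are packed to admit another copy: a triangulation can have very large maximum degree in general, and without degree control the Sauer--Spencer condition would fail almost immediately. This is precisely what the bounded-degree construction achieves, after which the packing step runs automatically. A minor additional care is needed to verify that geodesic subdivision preserves both planarity and the property that every face is a triangle, but this is visible from the construction.
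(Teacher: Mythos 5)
Your proof is correct, but it takes a genuinely different route from the paper. The paper works with the nested-triangles triangulation on $3n$ vertices ($n$ prime, $n>3k^2$) and writes down $k$ explicit ``rotation-like'' bijections $f_p$ defined arithmetically modulo $n$, then verifies pairwise edge-disjointness by an exhaustive case analysis over the edge types; this is elementary, fully self-contained, and produces the compatible bijections explicitly. You instead reformulate compatibility as packing $k$ copies of a triangulation into $K_n$, manufacture an infinite family of triangulations with $\Delta\leq 6$ (iterated quadrisection of the icosahedron, where old vertices keep their degree and each new midpoint gets degree $6$), and then pack the copies one at a time via the Sauer--Spencer theorem, the key point being that after $j$ copies the host has maximum degree at most $6j$, so $2\cdot 6\cdot 6j<n$ holds once $n>72(k-1)$. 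The degree bookkeeping, the vertex count $|V(I_{\ell+1})|=4|V(I_\ell)|-6$, and the threshold on $n$ all check out, and the resulting permutations are indeed bijections of $V(I_\ell)$, so the lemma follows for infinitely many graphs. What each approach buys: yours is shorter and conceptually cleaner, needs only a linear-in-$k$ number of vertices rather than roughly $3k^2$, and makes clear that any bounded-degree family of triangulations would do, at the cost of being non-constructive and of importing a nontrivial external packing theorem; the paper's argument is longer and more computational but explicit and self-contained, which matters in a paper that otherwise cites nothing of Sauer--Spencer type. If you keep your version, state the packing theorem precisely with a citation and include the (easy but necessary) verification that the quadrisection of a simple triangulation is again a simple edge-maximal planar graph.
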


\newcommand{\uu}[1]{u\langle{#1}\rangle}
\newcommand{\vv}[1]{v\langle{#1}\rangle}
\newcommand{\ww}[1]{w\langle{#1}\rangle}

\begin{proof}
  Let $n$ be a prime number greater than $3k^2$. Let $G$ be the graph
  with vertex set $$V(G)=\{\uu{i},\vv{i},\ww{i}:i\in[0,n-1]\}$$
  and edge set $E(G)=A\cup B\cup C$, where
  \begin{align*}
    A\;=\;&\{\uu{i}\uu{i+1},\vv{i}\vv{i+1},\ww{i}\ww{i+1}: i\in[0,n-2]\}, \\
    B\;=\;  &\{\uu{i}\vv{i},\vv{i}\ww{i},\ww{i}\uu{i}: i\in[0,n-1]\}, \\
  C\;=\;    &\{\uu{i}\vv{i+1},\vv{i}\ww{i+1},\ww{i}\uu{i+1}: i \in[0,n-2]\}\enspace.
  \end{align*}
  Then $G$ is edge-maximal planar, as illustrated in
  \figref{NestedTriangles}.

  \Figure{NestedTriangles}{\includegraphics{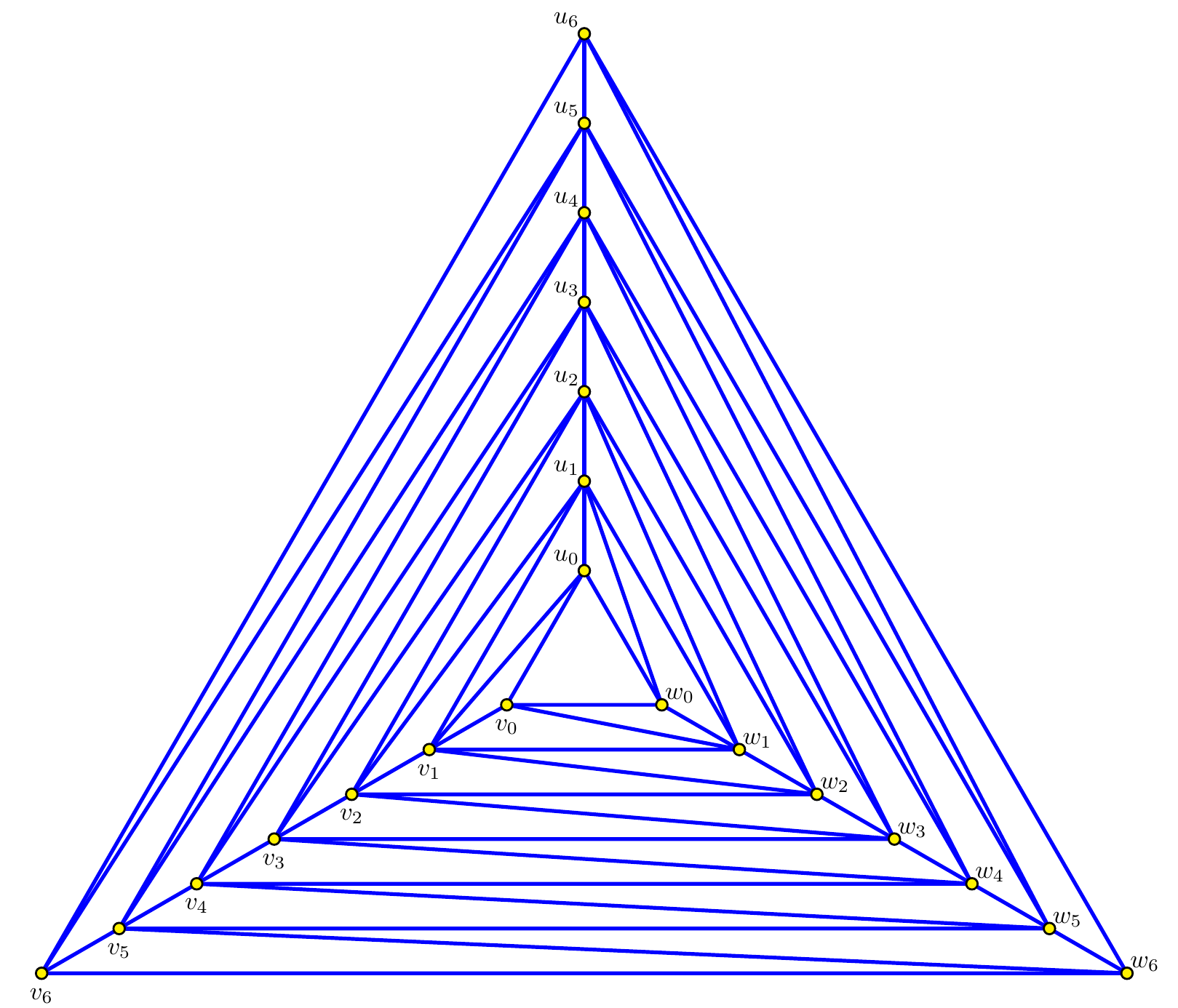}}{The
    nested triangles graph.}


  For each $p\in[1,k]$, let $f_p:V(G)\rightarrow V(G)$ be the function
  defined by
  \begin{align*}
    f_p(\uu{i}) &:=\uu{pi}\\
    f_p(\vv{i}) &:=\vv{pi+p(k+1)}\\
    f_p(\ww{i})&:=\ww{pi+2p(k+1)}\enspace,
  \end{align*}
  where vertex indices are always in the cyclic group
  $\mathbb{Z}_n$. Thus $f_p$ is a bijection. 

  Suppose to the contrary that $G^{f_p}$ and $G^{f_q}$ have an edge $e$ in
  common, for some distinct $p,q\in[1,k]$.  
  Since $\{\uu{0},\dots,\uu{n-1}\}$ is mapped to $\{\uu{0},\dots,\uu{n-1}\}$
  by both $f_p$ and $f_q$, and similarly for the $\vv{i}$ and $\ww{i}$, the
  following cases suffice.   All congruences are modulo $n$.


  Case 1. $e$ is from $A$ in both $G^{f_p}$ and $G^{f_q}$:

  Case 1a. $e=f_p(\uu{i}\uu{i+1})=f_q(\uu{j}\uu{j+1})$ for some $i,j$:
  Thus $\uu{pi}\uu{p(i+1)}=\uu{qj}\uu{q(j+1)}$. Then $pi\equiv qj$ and
  $pi+p\equiv qj+q$ (implying $p\equiv q$), or $pi\equiv qj+q$ and
  $pi+p\equiv qj$ (implying $pi-qj\equiv q\equiv -p$), which is a contradiction since $n>2k\geq p+q$. 

  Case 1b. $e=f_p(\vv{i}\vv{i+1})=f_q(\vv{j}\vv{j+1})$ for some $i,j$:
  Thus $\vv{pi+p(k+1)}\vv{p(i+1)+p(k+1)}=\vv{qj+q(k+1)}\vv{q(j+1)+q(k+1)}$.  
If $pi+p(k+1) \equiv qj+q(k+1)$ and $p(i+1)+p(k+1) \equiv q(j+1)+q(k+1)$, then 
$p \equiv q$, which is a contradiction  since $n>k\geq p,q$. 
Otherwise $pi+p(k+1) \equiv q(j+1)+q(k+1)$ and $p(i+1)+p(k+1) \equiv qj+q(k+1)$, implying 
$p+q \equiv 0$, which is a contradiction since $n>2k\geq p+q$.

  Case 1c. $e=f_p(\ww{i}\ww{i+1})=f_q(\ww{j}\ww{j+1})$ for some $i,j$:
  Thus $\ww{pi+2p(k+1)}\ww{p(i+1)+2p(k+1)}=\ww{qj+2q(k+1)}\ww{q(j+1)+2q(k+1)}$.  
If $pi+2p(k+1) \equiv qj+2q(k+1)$ and $p(i+1)+2p(k+1)\equiv q(j+1)+2q(k+1)$, then 
 $p \equiv q$, which is a contradiction since $n> k \geq p,q$. 
Otherwise $pi+2p(k+1) \equiv q(j+1)+2q(k+1)$ and $p(i+1)+2p(k+1) \equiv qj+2q(k+1)$, implying
$p+ q  \equiv 0$, which is a contradiction since $n>2k \geq p+q$. 

  Case 2. $e$ is from $B$ in both $G^{f_p}$ and $G^{f_q}$:

  Case 2a. $e=f_p(\uu{i}\vv{i})=f_q(\uu{j}\vv{j})$ for some $i,j$.
  Thus $\uu{pi}\vv{pi+p(k+1)}=\uu{qj}\vv{qj+q(k+1)}$.  Then $pi\equiv
  qj$ and $pi+p(k+1)\equiv qj+q(k+1)$. Hence $p(k+1)\equiv q(k+1)$ and
  $p\equiv q$ since $n$ is prime, which is a contradiction since $n> k\geq p,q$. 
 
  Case 2b. $e=f_p(\vv{i}\ww{i})=f_q(\vv{j}\ww{j})$ for some $i,j$.
  Thus $\vv{pi+p(k+1)}\ww{pi+2p(k+1)}=\vv{qj+q(k+1)}\ww{qj+2q(k+1)}$.
  Then $pi+p(k+1)\equiv qj+q(k+1)$ and $pi+2p(k+1)\equiv
  qj+2q(k+1)$. Hence $p(k+1)\equiv   q(k+1)$, implying $p\equiv q$ since $n$ is prime, which is
  a contradiction since $n> k\geq p,q$. 

  Case 2c. $e=f_p(\ww{i}\uu{i})=f_q(\ww{j}\uu{j})$ for some $i,j$.
  Thus $\ww{pi+2p(k+1)}\uu{pi}=\ww{qj+2q(k+1)}\uu{qj}$.  Then
  $pi+2p(k+1)\equiv qj+2q(k+1)$ and $pi\equiv qj$. Hence
  $2p(k+1)\equiv 2q(k+1)$, implying $p\equiv q$ since $n$ is prime,
  which is a contradiction   since $n> k\geq p,q$. 

  Case 3. $e$ is from $C$ in both $G^{f_p}$ and $G^{f_q}$:

  Case 3a. $e=f_p(\uu{i}\vv{i+1})=f_q(\uu{j}\vv{j+1})$ for some $i,j$.
  Thus $\uu{pi}\vv{p(i+1)+p(k+1)}=\uu{qj}\vv{q(j+1)+q(k+1)}$.  Thus
  $pi\equiv qj$ and $p(i+1)+p(k+1)\equiv q(j+1)+q(k+1)$.  Hence
  $p(k+2)\equiv q(k+2)$, implying $p\equiv q$ since $n$ is prime, which is a contradiction  since $n> k\geq p,q$. 

  Case 3b. $e=f_p(\vv{i}\ww{i+1})=f_q(\vv{j}\ww{j+1})$ for some $i,j$.
  Thus 
  $\vv{pi+p(k+1)}\ww{p(i+1)+2p(k+1)}=\vv{qj+q(k+1)}\ww{q(j+1)+2q(k+1)}$.
  Thus $pi+p(k+1)\equiv qj+q(k+1)$ and $p(i+1)+2p(k+1)\equiv
  q(j+1)+2q(k+1)$.  Hence $p(k+2)\equiv q(k+2)$, implying $p\equiv q$ since $n$ is prime, which is a contradiction  since $n> k\geq p,q$. 

  Case 3c. $e=f_p(\ww{i}\uu{i+1})=f_q(\ww{j}\uu{j+1})$ for some $i,j$.
  Thus $\ww{pi+2p(k+1)}\uu{p(i+1)}=\ww{qj+2q(k+1)}\uu{q(j+1)}$.  Thus
  $pi+2p(k+1)\equiv qj+2q(k+1)$ and $p(i+1)\equiv q(j+1)$.  Hence
  $p(2k+1)\equiv q(2k+1)$, implying $p\equiv q$ since $n$ is prime, which is a contradiction  since $n> k\geq p,q$. 

  Case 4. $e$ is from $B$ in $G^{f_p}$ and from $C$ in $G^{f_q}$:

  Case 4a. $e=f_p(\uu{i}\vv{i})=f_q(\uu{j}\vv{j+1})$ for some $i,j$.
  Thus $\uu{pi}\vv{pi+p(k+1)}=\uu{qj}\vv{q(j+1)+q(k+1)}$.  Then
  $pi\equiv qj$ and $pi+p(k+1)\equiv q(j+1)+q(k+1)$, implying
  $(p-q)(k+1)\equiv q$. 
  If $p> q$ then $(p-q)(k+1) \in [k+1,k^2-1]$ is not congruent to $q\in[1,k]$ since $n>k^2$. 
Otherwise $p<q$, implying $(p-q)(k+1) \in [-(k+1),-(k^2-1)]$ is not congruent to $q\in[1,k]$ since $n>2k^2$. 
  
  Case 4b. $e=f_p(\vv{i}\ww{i})=f_q(\vv{j}\ww{j+1})$ for some $i,j$.
  Thus
  $\vv{pi+p(k+1)}\ww{pi+2p(k+1)}=\vv{qj+q(k+1)}\ww{q(j+1)+2q(k+1)}$.
  Then $pi+p(k+1)\equiv qj+q(k+1)$ and $pi+2p(k+1)\equiv
  q(j+1)+2q(k+1)$.  Hence $(p-q)(k+1)\equiv q$. As in Case 3a, this is
  a contradiction.

  Case 4c. $e=f_p(\ww{i}\uu{i})=f_q(\ww{j}\uu{j+1})$ for some $i,j$.
  Thus $\ww{pi+2p(k+1)}\uu{pi}=\ww{qj+2q(k+1)}\uu{q(j+1)}$.  Then
  $pi+2p(k+1)\equiv qj+2q(k+1)$ and $pi\equiv q(j+1)$.  
  Hence $pi-qj \equiv 2(q-p)(k+1)\equiv q$. 
  If $q>p$ then $2(q-p)(k+1)\in [2k+2,2k^2-2]$ is not congruent to $q\in[1,k]$ since $n>3k^2$. 
Otherwise $q<p$, implying $2(q-p)(k+1)\in [-2k^2+2,-2k-2]$ is not congruent to $q\in[1,k]$ since $n>3k^2$. 

  Therefore $f_1,\dots,f_k$ are pairwise compatible bijections of $G$.
\end{proof}

\subsection{Geometric Thickness} 

Every graph with $n\geq 3$ vertices and geometric thickness $k$ has at most $3k(n-2)$
edges. Of course, this bound is tight for $k=1$. But for $k=2$,
\citet{HSV-CGTA99} improved this upper bound to $6n-18$, and
constructed a graph with geometric thickness $2$ and $6n-20$ edges. We
have the following lower and upper bounds for general $k$. The proof
is inspired by the proofs of lower and upper bounds on the geometric
thickness of complete graphs due to \citet{DEH-JGAA00}.

\begin{thm} 
  \thmlabel{MaxEdgesGeomThickness} For $k\geq 1$ and $n\geq\max\{2k,3\}$, every graph with
  $n$ vertices and geometric thickness $k$ has at most $k(3n-k-5)$
  edges. Conversely, for all such $n\equiv0\pmod{2k}$, there is an
  $n$-vertex graph with geometric thickness $k$ and exactly $k(3n-4k-3)$
  edges.
\end{thm}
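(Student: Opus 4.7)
My plan addresses the upper and lower bounds separately.

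For the upper bound, the plan is to count edges layer by layer using the geometry of the fixed point set. Let $h$ denote the number of points on the convex hull of the drawing. Each of the $k$ plane subgraphs is a plane graph on these $n$ points with $h$ on the convex hull, so by Euler's formula each has at most $3n-h-3$ edges. Summing, $|E(G)| \leq k(3n-h-3)$, which immediately yields the target bound $k(3n-k-5)$ whenever $h \geq k+2$. When $h \leq k+1$ I would refine the count by isolating convex-hull edges: each such edge appears in at most one layer (contributing at most $h$ in total), while the interior edges of any single layer number at most $3n-2h-3$ (since adjoining the missing hull edges produces a plane graph of size at most $3n-h-3$). This gives $|E(G)| \leq k(3n-2h-3)+h$, which suffices whenever $h(2k-1)\geq k^{2}+2k$, handling $h \gtrsim k/2$. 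The residual regime of very small $h$ with large $k$ I would handle by observing that a layer missing many hull edges must route its outer-face boundary through interior vertices, giving a longer outer-face cycle and so fewer edges by Euler; combining this with a counting argument that forces different layers to use different interior detours should close the gap. I expect this last step, a discharging-style argument on hull/interior incidences, to be the main obstacle.

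For the lower bound, given $n = 2km$, the plan is an explicit geometric construction, inspired by the Dillencourt--Eppstein--Hirschberg construction \citep{DEH-JGAA00} for the geometric thickness of $K_n$. I would place the $n$ vertices as $m$ nested, slightly rotated regular $2k$-gons, labelling each vertex by $(i,j)$ with polygon index $i\in\{1,\ldots,m\}$ and angular position $j\in\{0,\ldots,2k-1\}$. For each layer $\ell\in\{1,\ldots,k\}$, I would include spoke edges $(i,j)(i+1,j)$ between consecutive polygons, twisted-spoke edges $(i,j)(i+1,j+\ell)$ with the second index taken modulo $2k$, and selected chord edges within each polygon indexed by $\ell$; the polygon rotations and chord selections are chosen so each layer is planar and the $k$ layers are pairwise edge-disjoint. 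A careful count gives $3n-4k-3$ edges per layer and $k(3n-4k-3)$ edges overall.

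The main obstacle for the construction is simultaneously verifying planarity of each layer and edge-disjointness across layers: any two triangulation-like plane graphs on a common point set share all the convex-hull edges, so the construction must deliberately sacrifice about $4k$ edges per layer relative to a full triangulation, and forcing the count to be \emph{exactly} $k(3n-4k-3)$ requires care in how the missing edges are distributed among the layers. I would verify the pattern on small cases (for instance $k=2$, $m=2$, $n=8$, where the target is $26$ edges) before extending to general $k$ and $m$.
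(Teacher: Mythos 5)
Your upper-bound argument has a genuine gap exactly where you locate it, and the patch you propose does not address the right obstruction. Your hull-based count gives $|E(G)|\leq k(3n-2h-3)+h$, which fails once $h<\tfrac{k^2+2k}{2k-1}\approx k/2$, and this regime is non-vacuous (a triangular hull, $h=3$, with $k\geq 4$). In that regime, $k$ triangulations of the point set that pairwise shared only the three hull edges would have a union with $k(3n-9)+3>k(3n-k-5)$ edges (the difference is $(k-1)(k-3)$), so any correct proof must show that distinct triangulations of a common point set are forced to overlap on many \emph{interior} edges; the loss does not occur at the outer face, so a discharging argument on hull/interior incidences is aimed at the wrong place. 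The paper finds the forced overlap elsewhere: let $A$ be the $k$ leftmost and $B$ the $k$ rightmost vertices. For $v,w\in A$ the left sides of the triangles of $T_i$ crossed by the segment $vw$ form a $vw$-path in $T_i[A]$, so $T_i[A]$ (and likewise $T_i[B]$) is connected and has at least $k-1$ edges. Since only $2\binom{k}{2}$ edges have both ends in $A$ or both ends in $B$, each $T_i$ contributes at most $3n-6-2(k-1)$ edges outside those two cliques, giving $|E(G)|\leq 2\binom{k}{2}+k(3n-2k-4)=k(3n-k-5)$ for every point set, independently of $h$.

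Your lower-bound plan is in the same spirit as the paper's (nested $2k$-gons with book-embedding paths inside each ring, following Dillencourt--Eppstein--Hirschberg), but as described it cannot reach the claimed count. Spokes plus one twisted spoke per layer supply only $O(k)$ inter-ring edges per layer per consecutive pair of rings, whereas achieving $3n-4k-3$ edges per layer requires each colour class to contain \emph{all} $2\cdot 2k$ edges from two antipodal vertices of ring $a$ to the whole of ring $a-1$, plus a few extra edges that skip a ring, which is what the paper's construction does. Moreover, if every layer literally contains all spokes $(i,j)(i+1,j)$ then the layers are not edge-disjoint. So both halves of your proposal need substantive new ideas beyond what is written, and the upper half in particular needs a different structural lemma, not a refinement of the Euler count.
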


\begin{proof}[Proof of Upper Bound]
  Let $T_1,\dots,T_k$ be triangulations of $V(G)$ such that $E(G)$ is
  contained in $T_1 \cup\dots\cup T_k$.  Assume that no two vertices
  have the same x-coordinate.  Let $A$ be the set of the $k$ leftmost
  vertices. Let $B$ be the set of the $k$ rightmost vertices.  Since
  $n\geq 2k$, we have $A\cap B=\emptyset$.

  For distinct vertices $v, w \in A$, the line segment $vw$ crosses a
  number of triangular faces in $T_i$. The left sides of
  these faces form a $vw$-path in $T_i[A]$. Thus $T_i[A]$ is
  connected. Similarly $T_i[B]$ is connected.

  Thus $T_i[A]$ and $T_i[B]$ both have at least $k-1$ edges.  Hence
  $T_i$ contains at most $3n-6 - 2(k-1)=3n-2k-4$ edges with at most one
  end-vertex in $A$ and at most one end-vertex in $B$. Thus $|E(G)| \leq
  |E( T_1 \cup\dots\cup T_k ) | \leq 2 \binom{k}{2} + k (3n-2k-4) =
  k(3n - k -5)$.
\end{proof}

\begin{proof}[Proof of Lower Bound]
Fix a positive integer $s$.  We construct a geometric graph $G$ with $n=2sk$ vertices and
  geometric thickness $k$. The vertices are partitioned into levels
  $V_1,\dots,V_s$ each with $2k$ vertices, where
  $V_a:=\{(a,i):i\in[1,2k]\}$ for $a\in[1,s]$. The vertices in each level $V_a$ are
  evenly spaced on a circle $C_a$ of radius $r_a$ centred at the
  origin, where $1=r_1<\dots<r_s$ are specified below. The vertices in
  $V_a$ are ordered $(a,1),\dots,(a,2k)$ clockwise around $C_a$. Thus
  $(a,j)$ is opposite $(a,k+j)$, where the second coordinate is always modulo $2k$. All congruences below are modulo $2k$. 

  The first level $V_1$ induces a complete graph. For distinct 
  $i,j\in[1,2k]$, the edge $(1,i)(1,j)$ is coloured by the
  $\ell\in[1,k]$ such that $i+j \equiv 2\ell$ or $i+j\equiv 2\ell-1$. The edges coloured $\ell$
  form a non-crossing path with end-vertices $(1,\ell)$ and $(1,k+\ell)$,
  as illustrated in \figref{BookEmbedding}. Note that
  $(1,\ell+\floor{\frac{k}{2}})(1,\ell+\floor{\frac{3k}{2}})$ is the
  `long' edge in this path.  This is a well-known construction of a
  $k$-page book embedding of $K_{2k}$; see \citep{BHRW06} for
  example. This contributes $\binom{2k}{2}$ edges to $G$.

  \Figure{BookEmbedding}{\includegraphics{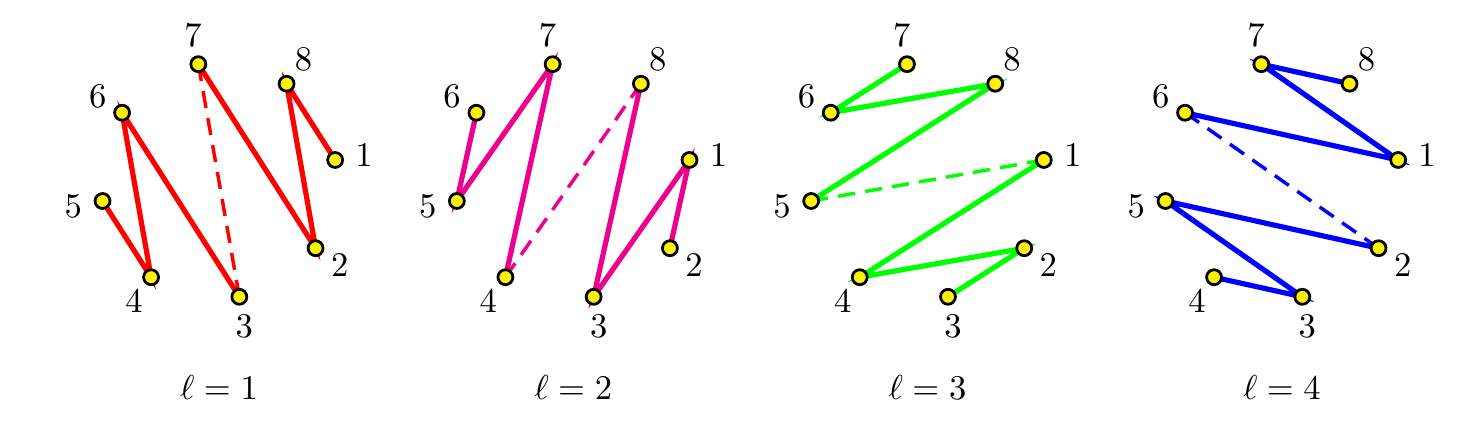}}{The edges
    between vertices in $V_a$ with $k=4$. The dashed edges are
    included only for $a=1$.}

  Every other level $V_a$ (where $a\in[2,s]$) induces a complete graph
  minus a perfect matching. We use a partition into non-crossing
  paths, analogous to that used in the $a=1$ case, except that the
  `long' edge in each path is not included. More precisely, for distinct 
  $i,j\in[1,2k]$ with $i\not\equiv k+j$, the edge $(a,i)(a,j)$ is
  coloured by the $\ell\in[1,k]$ such that $i+j \equiv 2\ell$ or
  $i+j \equiv 2\ell-1$. The edges coloured $\ell$ form two non-crossing
  paths, one with end-vertices $(a,\ell)$ and
  $(a,\ell+\floor{\frac{3k}{2}})$, the other with end-vertices
  $(a,\ell+\floor{\frac{k}{2}})$ and $(a,k+\ell)$, as illustrated in
  \figref{BookEmbedding}.  This contributes $(s-1)(\binom{2k}{2}-k)$
  edges to $G$.

  We now define the edges between the layers, as illustrated in
  \figref{ThreeLayerTwoLevel}. For $a\in[2,s]$ and $i\in[1,k]$ and
  $j\in[1,2k]$, the edges $(a,i+\floor{\frac{k}{2}})(a-1,j)$ and
  $(a,i+\floor{\frac{3k}{2}})(a-1,j)$ are present and are coloured
  $i$. This contributes $2(s-1)2k^2$ edges to $G$.  Finally, for
  $a\in[2,s-1]$ and $i\in[1,k]$, the edges
  $(a+1,i+\floor{\frac{k}{2}})(a-1,k+i)$ and
  $(a+1,i+\floor{\frac{3k}{2}})(a-1,i)$ are present and are coloured
  $i$.  This contributes $2(s-2)k$ edges to $G$.

  As illustrated in \figref{ThreeLayerTwoLevel}, given a drawing of
  the first $a$ layers (which are defined by $r_1,\dots,r_a$) there is
  a sufficiently large value of $r_{a+1}$ such that the addition of
  the $(a+1)$-th layer does not create any crossings between edges
  with the same colour.

  In total, $G$ contains
  $\binom{2k}{2}+(s-1)(\binom{2k}{2}-k)+  (s-1)(\binom{2k}{2}-k) + 2(s-2)k$ edges, which equals $6ks-4k^2-3k=k(3n-4k-3)$.  
%
\end{proof}

\Figure[!t]{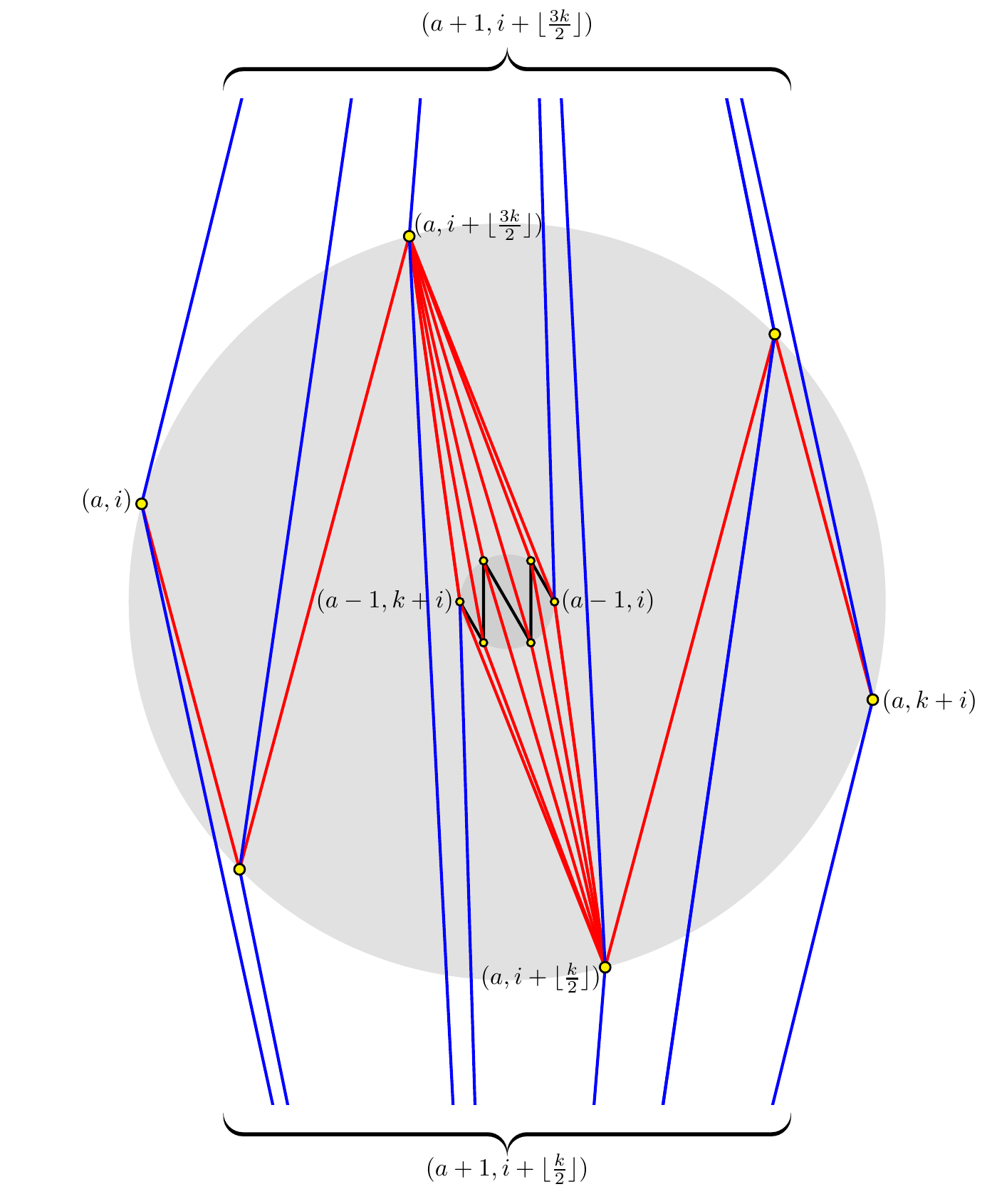}{\includegraphics{ThreeLayerTwoLevel}}{Edges coloured $i$ between layers.}







Examples of the construction in \thmref{MaxEdgesGeomThickness} are
given in \twofigref{TwoLayer}{ThreeLayer}.

\Figure[!h]{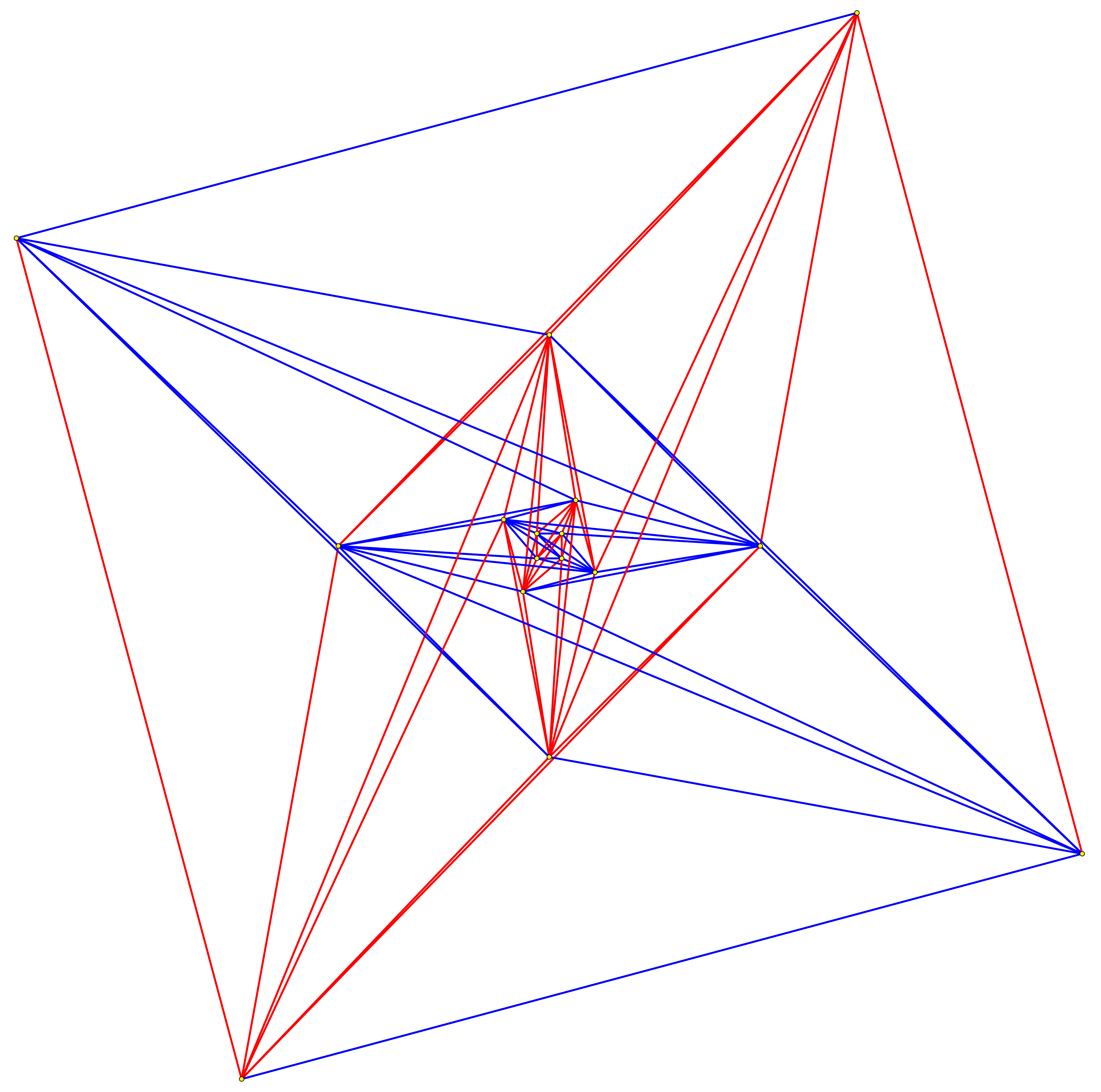}{\includegraphics[width=\textwidth]{TwoLayer}}{The
  construction in \thmref{MaxEdgesGeomThickness} with $k=2$ and
  $s=4$.}

\Figure[!h]{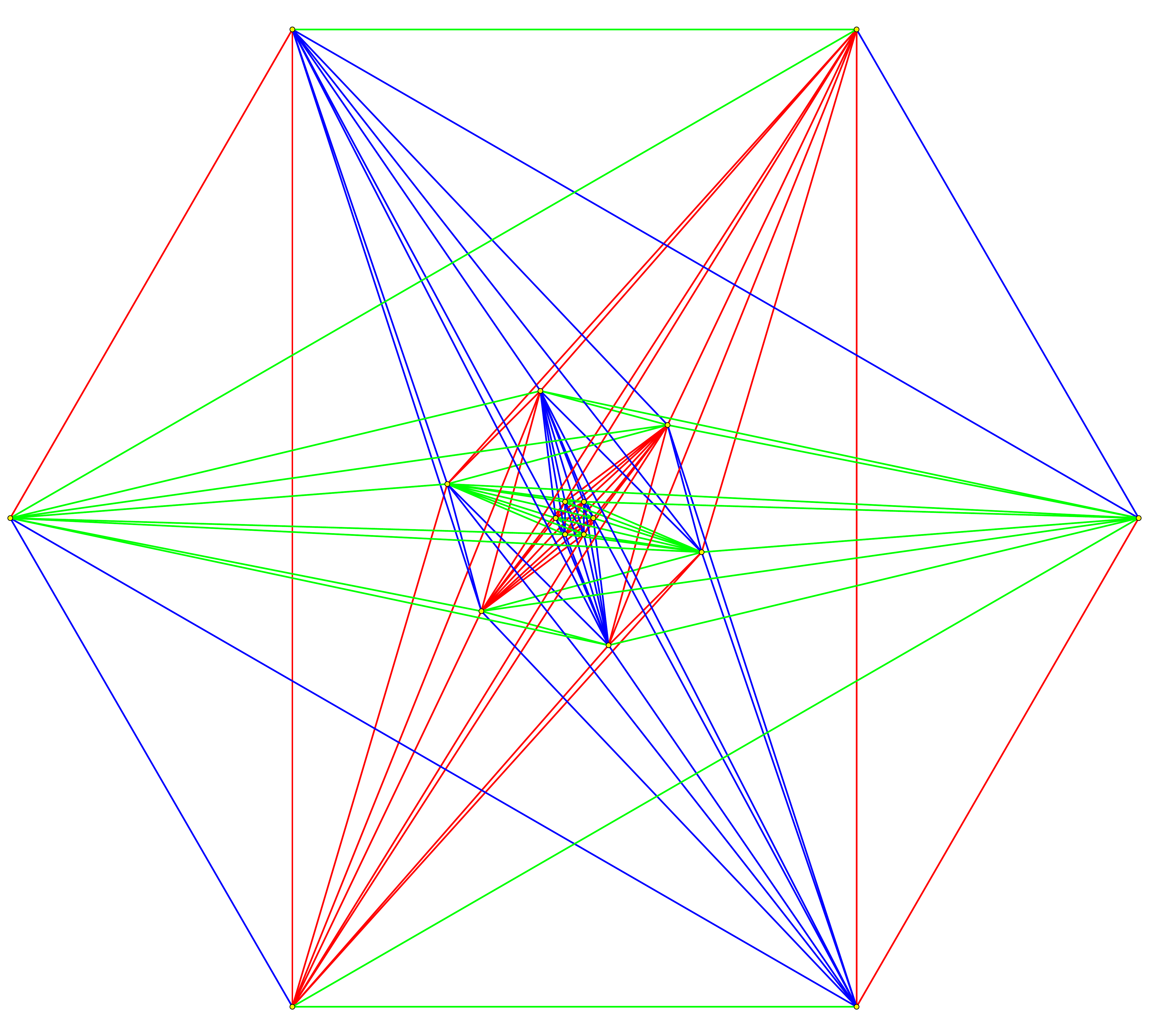}{\includegraphics[width=\textwidth]{ThreeLayer}}{The
  construction in \thmref{MaxEdgesGeomThickness} with $k=3$ and
  $s=3$.}

\section{Antithickness of Complete Graphs}\seclabel{Complete}

Let $\ctn{G}$ be the convex antithickness of a graph $G$. We now consider $\ctn{K_n}$. \citet{ADHNU05} proved\footnote{\citet{ADHNU05} did not use the terminology of `antithickness', but it is easily seen that their definition of $d_c(n)=\ctn{K_n}$.} that 
\begin{equation}
  \eqnlabel{ADHNU}
  2\FLOOR{\frac{n+1}{3}}-1\leq\ctn{K_n}<n-\HALF\FLOOR{\log n}.
\end{equation}
In the original version of this paper (cited in \citep{FJVW,FW12}), we improved both the lower and upper bound as follows.
\begin{thm}
  \thmlabel{ConvexCompleteAntithickness} 
The convex antithickness of the complete graph $K_n$ satisfies
$$\frac{3n-6}{4}\leq\ctn{K_n}<n-\sqrt{\frac{n}{2}}-\frac{\ln n}{2}+4.$$
\end{thm}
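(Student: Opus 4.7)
We prove the two inequalities independently, using different techniques.

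For the \emph{lower bound} $\ctn{K_n}\geq(3n-6)/4$, fix any convex drawing of $K_n$ and any edge partition into convex thrackles $T_1,\dots,T_k$. The elementary bound $|T_i|\leq n$ (every convex thrackle on $n$ vertices has at most $n$ edges) yields only $k\geq(n-1)/2$, which falls short. My plan is to refine this by exploiting the rigidity of near-maximum convex thrackles: the only way to achieve $|T_i|=n$ is to take a full star at one vertex together with exactly one ``closing'' chord (the unique chord crossing every star edge, which the alternation test forces to be $v_1v_{n-1}$ when the star is centred at $v_0$). Distinct size-$n$ thrackles in a partition must therefore have distinct star centres. A weighted double-count of (vertex, thrackle) incidences---charging each edge of $T_i$ to its two endpoints with a suitable weight penalising thrackles that use many edges at a single vertex---should then produce an inequality of the form $\sum_i|T_i|\leq\tfrac{2(n-1)}{3}\,k+O(n)$. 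Combined with $\sum_i|T_i|=\binom{n}{2}$, this yields $k\geq(3n-6)/4$ after rearrangement.

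For the \emph{upper bound} $\ctn{K_n}<n-\sqrt{n/2}-\tfrac{\ln n}{2}+4$, place $v_0,\dots,v_{n-1}$ in convex position. My plan is a greedy peeling scheme: for $i=1,2,\dots,k$, iteratively choose a star centre $c_i$ and a compatible chord, and let $T_i$ consist of the star at $c_i$ within the still-uncovered edges together with that chord, whose compatibility with the star is verified by the same alternation test used to justify ``star at $v_0$ plus chord $v_1v_{n-1}$'' as a convex thrackle. By spreading the centres around the circle---e.g., alternating between positions near $v_0$ and $v_{\lfloor n/2\rfloor}$, such as $c_1=v_0,\,c_2=v_{\lfloor n/2\rfloor},\,c_3=v_2,\,c_4=v_{\lfloor n/2\rfloor+2},\dots$---each $T_i$ retains as many uncovered star edges as possible, and a counting of used-edge counts shows that typically $|T_i|\geq n-i+1$ as long as sufficiently many ``fresh'' vertices remain. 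Summing sizes and requiring $kn-\binom{k}{2}\geq\binom{n}{2}$ yields $k\leq\bigl[(2n+1)-\sqrt{8n+1}\bigr]/2\approx n-\sqrt{2n}$. Since $\sqrt{2n}-\sqrt{n/2}=\sqrt{n/2}\geq\tfrac{\ln n}{2}-4$ for all $n\geq 1$ (easy verification using \eqnref{Harmonics} for the tail), the claimed bound $n-\sqrt{n/2}-\tfrac{\ln n}{2}+4$ follows.

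The main obstacles I anticipate are: in the lower bound, calibrating the weighted double-count so that the constant $3/4$ (rather than the weaker $2/3$ or $1/2$ coming from unweighted counts) drops out; and in the upper bound, showing rigorously that the greedy spreading scheme can always be executed so that cumulative edge coverage reaches $\binom{n}{2}$ within the claimed number of steps---in particular, that the compatible chord continues to exist once many chord-endpoints have already been consumed. Small values of $n$ will likely need a direct verification to confirm the bound once the subleading $-\tfrac{\ln n}{2}+4$ term matters.
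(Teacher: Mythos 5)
Both halves of your plan have genuine gaps, and neither matches the paper's argument. For the lower bound, the structural claim on which your weighted double count rests is false: a maximum convex thrackle need not be a star plus a closing chord. For odd $n$, the edges of circular distance exactly $\frac{n-1}{2}$ form a thrackled Hamiltonian cycle in convex position (the ``pentagram'' for $n=5$), which has $n$ edges and no star centre at all; the paper uses exactly this object as the base case $G(n,\floor{n/2})$ of its upper-bound recursion. With the rigidity claim gone, your key inequality $\sum_i|T_i|\leq\frac{2(n-1)}{3}k+O(n)$ is left entirely unproved, and it is not a soft statement: with $k=\frac{3n}{4}$ it is tight with no slack, and in the regime $k<\frac{2n}{3}$ it is not even implied by the (later, and much harder) bound $\sum_i|T_i|\leq kn-\binom{k}{2}$ of Fabila-Monroy and Wood. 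The paper avoids all of this: it takes a minimal counterexample, shows via four consecutive hull vertices that the boundary edges must be coloured in consecutive monochromatic pairs $a,a,b,b,c,c,\dots$ (else deleting those four vertices kills three colours and contradicts minimality), and then counts $\frac{n}{2}$ boundary colours plus $\floor{\frac{1}{2}(\frac{n}{2}-2)}$ further colours forced by pairwise disjoint internal edges avoiding the $\frac{n}{2}$ ``monochromatic'' vertices.

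For the upper bound, your greedy star-plus-chord peeling is only a necessary-condition count: requiring $kn-\binom{k}{2}\geq\binom{n}{2}$ tells you what $k$ \emph{would} suffice if every $T_i$ could be made to have $n-i+1$ edges, but you never establish that the scheme can be executed. The two obstacles you yourself flag are exactly where it breaks: the closing chord for the star at $c_i$ must be an as-yet-uncovered edge separating $c_i$ from all remaining star leaves, and the $\binom{n-k}{2}\approx n$ edges joining vertices that are never chosen as centres must all be absorbed by these chords. Note that your target $k\approx n-\sqrt{2n}$ is the \emph{exact} value of $\ctn{K_n}$ later proved by Fabila-Monroy, Jonsson, Valtr and Wood with a considerably more delicate construction, so a loosely specified greedy argument reaching it should be treated with suspicion. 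The paper instead proves the weaker bound $n-\sqrt{n/2}-\frac{\ln n}{2}+4$ by a controlled recursion on the distance graphs $G(n,\ell)$ (edges of circular distance $\geq\ell$): one round places $\ceil{\frac{n}{\ell+1}}$ evenly spaced star centres, covers all edges incident to them and all edges of distance at most $\ell+1$ using one thrackle per centre, and leaves a copy of $G(\floor{\frac{\ell n}{\ell+1}},\ell+1)$; unrolling the recursion with $\ell$ up to $\ceil{\sqrt{n/2}}$ and using the harmonic-sum estimate \eqnref{Harmonics} gives the stated bound. If you want to pursue your route, you would need to either prove the chord-availability and coverage claims for an explicit ordering of centres, or fall back to a batched scheme like the paper's where compatibility is guaranteed by construction.
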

In 2007, we conjectured that $\ctn{K_n}=n-o(n)$. This conjecture was subsequently verified by \citet{FW12} who proved that every $n$-vertex graph with convex antithickness $k$ has at most $kn-\binom{k}{2}$ edges, which implies that 
\begin{equation}
\eqnlabel{FabWoo}
\ctn{K_n}\geq n-\sqrt{2n+\quarter}+\half.
\end{equation}
This is a significant improvement over the lower bound in \thmref{ConvexCompleteAntithickness}. The upper bound in \thmref{ConvexCompleteAntithickness} has since been improved by \citet{FJVW} to match the lower bound in \eqnref{FabWoo}. Thus
\begin{equation*}
\ctn{K_n}= n-\FLOOR{\sqrt{2n+\quarter}-\half}.
\end{equation*}
For the historical record we include our proof of \thmref{ConvexCompleteAntithickness} in an appendix.

  

%
%
%

Now consider the antithickness of $K_n$.

\begin{prop}
  The antithickness of $K_n$ is at least $\frac{n}{3}$ and at most
  $\ceil{\frac{n-1}{2}}$.
\end{prop}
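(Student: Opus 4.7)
The plan is to prove the two bounds separately.

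For the upper bound, the idea is to combine a classical Hamilton decomposition of $K_n$ with Woodall's result that every tree is thrackleable and every cycle other than $C_4$ is thrackleable. When $n$ is odd, Walecki's decomposition partitions $E(K_n)$ into $(n-1)/2$ edge-disjoint Hamilton cycles; each such cycle has length $n\geq 3$ with $n\neq 4$, and is therefore thrackleable. When $n$ is even, the analogous Walecki-type construction partitions $E(K_n)$ into $n/2$ Hamilton paths; each Hamilton path is a tree and hence thrackleable. In either case we produce $\lceil (n-1)/2\rceil$ thrackleable edge-disjoint subgraphs, which bounds the antithickness of $K_n$ from above. The very small cases $n\leq 4$ can be checked directly (for instance, $K_4$ decomposes into the two Hamilton paths $1\text{-}2\text{-}3\text{-}4$ and $2\text{-}4\text{-}1\text{-}3$).

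For the lower bound, I will invoke \thmref{ThicknessTied}, whose proof uses the theorem of \citet{CN-DCG00} that every thrackle on $n$ vertices has at most $\tfrac{3}{2}(n-1)$ edges. Suppose $K_n$ has antithickness $k$. Then its $\binom{n}{2}$ edges can be partitioned into $k$ thrackleable subgraphs, each of which, as a subgraph of a thrackle on at most $n$ vertices, has at most $\tfrac{3}{2}(n-1)$ edges. Hence
\begin{equation*}
\binom{n}{2}\;\leq\;k\cdot\tfrac{3}{2}(n-1),
\end{equation*}
which rearranges to $k\geq n/3$.

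The main potential obstacle is simply being careful about which cycles and paths are thrackleable in the upper bound argument (avoiding $C_4$ and handling $n\leq 4$ by hand) and making sure the Hamilton (cycle or path) decomposition of $K_n$ is correctly invoked for each parity of $n$. Neither step introduces any serious technical difficulty, since both facts are classical.
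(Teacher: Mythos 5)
Your proof is correct, and both halves follow the paper's strategy almost exactly: the lower bound via the Cairns--Nikolayevsky bound of $\tfrac{3}{2}(n-1)$ edges per thrackleable subgraph is identical to the paper's argument, and the upper bound for odd $n$ uses the same Walecki decomposition into $\tfrac{n-1}{2}$ Hamiltonian cycles. The only divergence is the even case: you decompose $K_n$ into $\tfrac{n}{2}$ Hamiltonian paths (thrackleable as trees, by Woodall), whereas the paper deletes one vertex, applies the odd case to get $\tfrac{n-2}{2}$ odd cycles of length $n-1$, and covers the remaining edges with a single $(n-1)$-edge star. Both give $\lceil\tfrac{n-1}{2}\rceil$ thrackleable subgraphs; your version is arguably a touch cleaner in that it treats both parities uniformly via Hamiltonian decompositions, while the paper's version only ever needs odd cycles and stars and so sidesteps the $C_4$ caveat entirely (which, as you note, is also moot for odd Hamiltonian cycles). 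Your explicit handling of $n\leq 4$ is unnecessary but harmless.
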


\begin{proof}
The lower bound follows from the fact that every graph with   antithickness at most $k$ has at most $\frac{3}{2}k(n-1)$ edges; see   \secref{Extremal}. For the upper bound, first consider the case of odd $n$. 
Walecki proved $K_n$ has a edge-partition into $\frac{n-1}{2}$ Hamiltonian cycles~\citep{Lucas1892}. Each such cycle is a thrackle. By \corref{AntithicknessCharacterisation},  the antithickness of $K_n$ is at most $\frac{n-1}{2}$. For even $n$, (applying the odd case) there is an edge-partition into $\frac{n-2}{2}$ odd cycles of length $n-1$, plus one $(n-1)$-edge star. Each such cycle and the star is a thrackle. By \corref{AntithicknessCharacterisation},  the antithickness of $K_n$ is at most $\frac{n-2}{2}+1=\ceil{\frac{n-1}{2}}$. 
\end{proof}


We conjecture that the antithickness of $K_n$ equals $\ceil{\frac{n-1}{2}}$ (which is implied by Conway's thrackle conjecture). Determining the geometric antithickness of $K_n$ is an open problem. The best known upper  bound is $n-\floor{\sqrt{2n+\quarter}-\half}$, which follows  from the convex case. The
  best lower bound is only $\frac{n-1}{2}$, which follows from the
  fact that every $n$-vertex graph with geometric antithickness $k$
  has at most $kn$ edges.

\subsection*{Acknowledgement}

This research was  initiated at the 2006 Bellairs Workshop on Computational Geometry organised by Godfried Toussaint. Thanks to the other workshop participants for creating a productive working environment. 

  \let\oldthebibliography=\thebibliography
  \let\endoldthebibliography=\endthebibliography
  \renewenvironment{thebibliography}[1]{%
    \begin{oldthebibliography}{#1}%
      \setlength{\parskip}{0.2ex}%
      \setlength{\itemsep}{0.2ex}%
  }{\end{oldthebibliography}}

\begin{thebibliography}{100}
\providecommand{\natexlab}[1]{#1}
\providecommand{\url}[1]{\texttt{#1}}
\providecommand{\urlprefix}{}
\expandafter\ifx\csname urlstyle\endcsname\relax
  \providecommand{\doi}[1]{doi:\discretionary{}{}{}#1}\else
  \providecommand{\doi}{doi:\discretionary{}{}{}\begingroup
  \urlstyle{rm}\Url}\fi

\bibitem[{Ageev(1996)}]{Ageev-DM96}
\textsc{Alexander Ageev}.
\newblock A triangle-free circle graph with chromatic number {$5$}.
\newblock \emph{Discrete Math.}, 152(1--3):295--298, 1996.
\newblock \doi{10.1016/0012-365X(95)00349-2}.
\newblock \msn{1388649}.

\bibitem[{Ageev(1999)}]{Ageev-DM99}
\textsc{Alexander Ageev}.
\newblock Every circle graph of girth at least {$5$} is {$3$}-colourable.
\newblock \emph{Discrete Math.}, 195(1--3):229--233, 1999.
\newblock \doi{10.1016/S0012-365X(98)00192-7}.
\newblock \msn{1663843}.

\bibitem[{Alon and Erd{\H{o}}s(1989)}]{AE-DCG89}
\textsc{Noga Alon and Paul Erd{\H{o}}s}.
\newblock Disjoint edges in geometric graphs.
\newblock \emph{Discrete Comput. Geom.}, 4(4):287--290, 1989.
\newblock \doi{10.1007/BF02187731}.
\newblock \msn{0996763}.

\bibitem[{Araujo et~al.(2005)Araujo, Dumitrescu, Hurtado, Noy, and
  Urrutia}]{ADHNU05}
\textsc{Gabriela Araujo, Adrian Dumitrescu, Ferran Hurtado, Marc Noy, and Jorge
  Urrutia}.
\newblock On the chromatic number of some geometric type {K}neser graphs.
\newblock \emph{Comput. Geom. Theory Appl.}, 32(1):59--69, 2005.
\newblock \doi{10.1016/j.comgeo.2004.10.003}.
\newblock \msn{2155418}.

\bibitem[{Archdeacon and Stor(2017)}]{AS17}
\textsc{Dan Archdeacon and Kirsten Stor}.
\newblock Superthrackles.
\newblock \emph{Australas. J. Combin.}, 67:145--158, 2017.
\newblock \msn{3607819}.

\bibitem[{Bernhart and Kainen(1979)}]{BK79}
\textsc{Frank~R. Bernhart and Paul~C. Kainen}.
\newblock The book thickness of a graph.
\newblock \emph{J. Combin. Theory Ser. B}, 27(3):320--331, 1979.
\newblock \doi{10.1016/0095-8956(79)90021-2}.
\newblock \msn{554297}.

\bibitem[{Bilski(1992)}]{Bilski-IE92}
\textsc{Tomasz Bilski}.
\newblock Embedding graphs in books: a survey.
\newblock \emph{Computers and Digital Techniques. IEE Proc. E},
  139(2):134--138, 1992.
\newblock \urlprefix\url{https://ieeexplore.ieee.org/document/129252/}.

\bibitem[{Blankenship(2003)}]{Blankenship-PhD03}
\textsc{Robin Blankenship}.
\newblock \emph{Book Embeddings of Graphs}.
\newblock Ph.D. thesis, Department of Mathematics, Louisiana State University,
  U.S.A., 2003.
\newblock
  \urlprefix\url{http://etd.lsu.edu/docs/available/etd-0709103-163907/unrestricted/Blankenship_dis.pdf}.

\bibitem[{Blankenship and Oporowski(1999)}]{BO99}
\textsc{Robin Blankenship and Bogdan Oporowski}.
\newblock Drawing subdivisions of complete and complete bipartite graphs on
  books.
\newblock Tech. Rep. 1999-4, Department of Mathematics, Louisiana State
  University, U.S.A., 1999.
\newblock \urlprefix\url{http://www.math.lsu.edu/~preprint/1999/rbbo991.ps}.

\bibitem[{Bose et~al.(2006)Bose, Hurtado, Rivera-Campo, and Wood}]{BHRW06}
\textsc{Prosenjit Bose, Ferran Hurtado, Eduardo Rivera-Campo, and David~R.
  Wood}.
\newblock Partitions of complete geometric graphs into plane trees.
\newblock \emph{Comput. Geom. Theory Appl.}, 34(2):116--125, 2006.
\newblock \doi{10.1016/j.comgeo.2005.08.006}.
\newblock \msn{2222887}.

\bibitem[{Buss and Shor(1984)}]{BS84}
\textsc{Jonathan~F. Buss and Peter Shor}.
\newblock On the pagenumber of planar graphs.
\newblock In \emph{Proc. 16th ACM Symp. on Theory of Computing (STOC '84)}, pp.
  98--100. ACM, 1984.
\newblock \doi{10.1145/800057.808670}.

\bibitem[{Cairns and King(2001)}]{CK01}
\textsc{Grant Cairns and Deborah~M. King}.
\newblock All odd musquashes are standard.
\newblock \emph{Discrete Math.}, 226(1--3):71--91, 2001.
\newblock \doi{10.1016/S0012-365X(00)00126-6}.

\bibitem[{Cairns et~al.(2015)Cairns, Koussas, and Nikolayevsky}]{CKN15}
\textsc{Grant Cairns, Timothy~J. Koussas, and Yuri Nikolayevsky}.
\newblock Great-circle spherical thrackles.
\newblock \emph{Discrete Math.}, 338(12):2507--2513, 2015.
\newblock \doi{10.1016/j.disc.2015.06.023}.

\bibitem[{Cairns et~al.(2004)Cairns, McIntyre, and Nikolayevsky}]{CMN04}
\textsc{Grant Cairns, Margaret McIntyre, and Yury Nikolayevsky}.
\newblock The thrackle conjecture for {$K\sb 5$} and {$K\sb {3,3}$}.
\newblock In  \cite{TowardsGeometricGraphTheory}, pp. 35--54.

\bibitem[{Cairns and Nikolayevsky(2000)}]{CN-DCG00}
\textsc{Grant Cairns and Yury Nikolayevsky}.
\newblock Bounds for generalized thrackles.
\newblock \emph{Discrete Comput. Geom.}, 23(2):191--206, 2000.
\newblock \doi{10.1007/PL00009495}.

\bibitem[{Cairns and Nikolayevsky(2009)}]{CN09}
\textsc{Grant Cairns and Yury Nikolayevsky}.
\newblock Generalized thrackle drawings of non-bipartite graphs.
\newblock \emph{Discrete Comput. Geom.}, 41(1):119--134, 2009.
\newblock \doi{10.1007/s00454-008-9095-5}.

\bibitem[{Cairns and Nikolayevsky(2012)}]{CN10}
\textsc{Grant Cairns and Yury Nikolayevsky}.
\newblock Outerplanar thrackles.
\newblock \emph{Graphs Combin.}, 28(1):85--96, 2012.
\newblock \doi{10.1007/s00373-010-1010-1}.

\bibitem[{Capoyleas and Pach(1992)}]{CapoPach-JCTB92}
\textsc{Vasilis Capoyleas and J\'{a}nos Pach}.
\newblock A {T}ur\'an-type theorem on chords of a convex polygon.
\newblock \emph{J. Combin. Theory Ser. B}, 56(1):9--15, 1992.
\newblock \doi{10.1016/0095-8956(92)90003-G}.

\bibitem[{{\v{C}}ern{\'y}(2005)}]{Cerny-DCG05}
\textsc{Jakub {\v{C}}ern{\'y}}.
\newblock Geometric graphs with no three disjoint edges.
\newblock \emph{Discrete Comput. Geom.}, 34(4):679--695, 2005.
\newblock \doi{10.1007/s00454-005-1191-1}.

\bibitem[{{\v{C}}ern{\'y}(2007)}]{Cerny07}
\textsc{Jakub {\v{C}}ern{\'y}}.
\newblock Coloring circle graphs.
\newblock \emph{Electron. Notes in Discrete Math.}, 29:457–--461, 2007.
\newblock \doi{10.1016/j.endm.2007.07.072}.

\bibitem[{contributors(2010)}]{HarmonicNumber}
\textsc{Wikipedia contributors}.
\newblock Harmonic number.
\newblock \emph{Wikipedia, the free encyclopedia}, 2010.
\newblock \urlprefix\url{http://en.wikipedia.org/wiki/Harmonic_number}.

\bibitem[{Cottingham(1993)}]{Cottingham93}
\textsc{Judith~Elaine Cottingham}.
\newblock \emph{Thrackles, surfaces, and maximum drawings of graphs}.
\newblock Ph.D. thesis, Clemson University, 1993.

\bibitem[{{Di Battista} et~al.(2013){Di Battista}, Frati, and Pach}]{DFP13}
\textsc{Giuseppe {Di Battista}, Fabrizio Frati, and J\'anos Pach}.
\newblock On the queue number of planar graphs.
\newblock \emph{SIAM J. Comput.}, 42(6):2243--2285, 2013.
\newblock \doi{10.1137/130908051}.
\newblock \msn{3141759}.

\bibitem[{Di~Giacomo et~al.(2005)Di~Giacomo, Liotta, and Meijer}]{GLM-CGTA05}
\textsc{Emilio Di~Giacomo, Giuseppe Liotta, and Henk Meijer}.
\newblock Computing straight-line {3D} grid drawings of graphs in linear
  volume.
\newblock \emph{Comput. Geom. Theory Appl.}, 32(1):26--58, 2005.
\newblock \doi{10.1016/j.comgeo.2004.11.003}.

\bibitem[{Di~Giacomo et~al.(2009)Di~Giacomo, Liotta, Meijer, and
  Wismath}]{DLMW-DM09}
\textsc{Emilio Di~Giacomo, Giuseppe Liotta, Henk Meijer, and Stephen~K.
  Wismath}.
\newblock Volume requirements of {3D} upward drawings.
\newblock \emph{Discrete Mathematics}, 309(7):1824--1837, 2009.
\newblock \doi{10.1016/j.disc.2007.12.061}.
\newblock \msn{2509356}.

\bibitem[{Di~Giacomo and Meijer(2004)}]{DM-GD03}
\textsc{Emilio Di~Giacomo and Henk Meijer}.
\newblock Track drawings of graphs with constant queue number.
\newblock In \textsc{Giuseppe Liotta}, ed., \emph{Proc. 11th International
  Symp. on Graph Drawing (GD '03)}, vol. 2912 of \emph{Lecture Notes in Comput.
  Sci.}, pp. 214--225. Springer, 2004.
\newblock \doi{10.1007/978-3-540-24595-7\_20}.

\bibitem[{Dillencourt et~al.(2000)Dillencourt, Eppstein, and
  Hirschberg}]{DEH-JGAA00}
\textsc{Michael~B. Dillencourt, David Eppstein, and Daniel~S. Hirschberg}.
\newblock Geometric thickness of complete graphs.
\newblock \emph{J. Graph Algorithms Appl.}, 4(3):5--17, 2000.
\newblock \doi{10.7155/jgaa.00023}.
\newblock \msn{1787458}.

\bibitem[{Ding and Oporowski(1996)}]{DO96}
\textsc{Guoli Ding and Bogdan Oporowski}.
\newblock On tree-partitions of graphs.
\newblock \emph{Discrete Math.}, 149(1--3):45--58, 1996.
\newblock \doi{10.1016/0012-365X(94)00337-I}.
\newblock \msn{1375097}.

\bibitem[{Dujmovi{\'c}(2015)}]{Duj15}
\textsc{Vida Dujmovi{\'c}}.
\newblock Graph layouts via layered separators.
\newblock \emph{J. Combin. Theory Series B.}, 110:79--89, 2015.
\newblock \doi{10.1016/j.jctb.2014.07.005}.
\newblock \msn{3279388}.

\bibitem[{Dujmovi{\'c} et~al.(2005)Dujmovi{\'c}, Morin, and Wood}]{DMW05}
\textsc{Vida Dujmovi{\'c}, Pat Morin, and David~R. Wood}.
\newblock Layout of graphs with bounded tree-width.
\newblock \emph{SIAM J. Comput.}, 34(3):553--579, 2005.
\newblock \doi{10.1137/S0097539702416141}.
\newblock \msn{2137079}.

\bibitem[{Dujmovi{\'c} et~al.(2017)Dujmovi{\'c}, Morin, and Wood}]{DMW17}
\textsc{Vida Dujmovi{\'c}, Pat Morin, and David~R. Wood}.
\newblock Layered separators in minor-closed graph classes with applications.
\newblock \emph{J. Combin. Theory Ser. B}, 127:111--147, 2017.
\newblock \doi{10.1016/j.jctb.2017.05.006}.
\newblock \msn{3704658}.

\bibitem[{Dujmovi{\'c} et~al.(2004)Dujmovi{\'c}, P\'or, and Wood}]{DPW04}
\textsc{Vida Dujmovi{\'c}, Attila P\'or, and David~R. Wood}.
\newblock Track layouts of graphs.
\newblock \emph{Discrete Math. Theor. Comput. Sci.}, 6(2):497--522, 2004.
\newblock \urlprefix\url{http://dmtcs.episciences.org/315}.
\newblock \msn{2180055}.

\bibitem[{Dujmovi{\'c} and Wood(2002)}]{DujWoo-TR-02-03}
\textsc{Vida Dujmovi{\'c} and David~R. Wood}.
\newblock Tree-partitions of $k$-trees with applications in graph layout.
\newblock Tech. Rep. TR-2002-03, School of Computer Science, Carleton
  University, Ottawa, Canada, 2002.

\bibitem[{Dujmovi{\'c} and Wood(2004)}]{DujWoo04}
\textsc{Vida Dujmovi{\'c} and David~R. Wood}.
\newblock On linear layouts of graphs.
\newblock \emph{Discrete Math. Theor. Comput. Sci.}, 6(2):339--358, 2004.
\newblock \urlprefix\url{http://dmtcs.episciences.org/317}.
\newblock \msn{2081479}.

\bibitem[{Dujmovi{\'c} and Wood(2005)}]{DujWoo05}
\textsc{Vida Dujmovi{\'c} and David~R. Wood}.
\newblock Stacks, queues and tracks: Layouts of graph subdivisions.
\newblock \emph{Discrete Math. Theor. Comput. Sci.}, 7:155--202, 2005.
\newblock \urlprefix\url{http://dmtcs.episciences.org/346}.
\newblock \msn{2164064}.

\bibitem[{Dushnik and Miller(1941)}]{DM41}
\textsc{Ben Dushnik and E.~W. Miller}.
\newblock Partially ordered sets.
\newblock \emph{Amer. J. Math.}, 63:600--610, 1941.
\newblock \doi{10.2307/2371374}.

\bibitem[{Eppstein(2001)}]{Eppstein01}
\textsc{David Eppstein}.
\newblock Separating geometric thickness from book thickness, 2001.
\newblock \arXiv{math/0109195}.

\bibitem[{Eppstein(2004)}]{Eppstein-AMS}
\textsc{David Eppstein}.
\newblock Separating thickness from geometric thickness.
\newblock In  \cite{TowardsGeometricGraphTheory}, pp. 75--86.
\newblock \arXiv{math/0204252}.

\bibitem[{Erd{\H{o}}s(1946)}]{Erdos46}
\textsc{Paul Erd{\H{o}}s}.
\newblock On sets of distances of {$n$} points.
\newblock \emph{Amer. Math. Monthly}, 53:248--250, 1946.
\newblock \doi{10.2307/2305092}.

\bibitem[{Fabila-Monroy et~al.(2018)Fabila-Monroy, Jonsson, Valtr, and
  Wood}]{FJVW}
\textsc{Ruy Fabila-Monroy, Jakob Jonsson, Pavel Valtr, and David~R. Wood}.
\newblock The exact chromatic number of the convex segment disjointness graph.
\newblock 2018.
\newblock \arXiv{1804.01057}.

\bibitem[{Fabila-Monroy and Wood(2012)}]{FW12}
\textsc{Ruy Fabila-Monroy and David~R. Wood}.
\newblock The chromatic number of the convex segment disjointness graph.
\newblock In \textsc{A.~M\'arquez, P.~Ramos, and J.~Urrutia}, eds., \emph{Proc.
  XIV Spanish Meeting on Computational Geometry} (EGC 2011, Hurtado Festschrift), vol. 7579 of \emph{Lecture Notes in Computer  Science}, pp. 79--84. Springer, 2012.
\newblock \doi{10.1007/978-3-642-34191-5\_7}.

\bibitem[{F{\'a}ry(1948)}]{Fary48}
\textsc{Istv{\'a}n F{\'a}ry}.
\newblock On straight line representation of planar graphs.
\newblock \emph{Acta Univ. Szeged. Sect. Sci. Math.}, 11:229--233, 1948.

\bibitem[{Felsner(2004)}]{Felsner04}
\textsc{Stefan Felsner}.
\newblock \emph{Geometric graphs and arrangements}.
\newblock Advanced Lectures in Mathematics. Friedr. Vieweg \& Sohn, Wiesbaden,
  2004.

\bibitem[{Fox and Pach(2010)}]{FP10}
\textsc{Jacob Fox and J{\'a}nos Pach}.
\newblock A separator theorem for string graphs and its applications.
\newblock \emph{Combin. Probab. Comput.}, 19(3):371--390, 2010.
\newblock \doi{10.1017/S0963548309990459}.

\bibitem[{Fox and Pach(2012)}]{FP12}
\textsc{Jacob Fox and J{\'a}nos Pach}.
\newblock String graphs and incomparability graphs.
\newblock \emph{Adv. Math.}, 230(3):1381--1401, 2012.
\newblock \doi{10.1016/j.aim.2012.03.011}.

\bibitem[{Fox and Pach(2014)}]{FP14}
\textsc{Jacob Fox and J{\'a}nos Pach}.
\newblock Applications of a new separator theorem for string graphs.
\newblock \emph{Combin. Probab. Comput.}, 23(1):66--74, 2014.
\newblock \doi{10.1017/S0963548313000412}.

\bibitem[{Fulek and Pach(2011)}]{FP11}
\textsc{Radoslav Fulek and J{\'a}nos Pach}.
\newblock A computational approach to {C}onway's thrackle conjecture.
\newblock \emph{Comput. Geom.}, 44(6-7):345--355, 2011.
\newblock \doi{10.1016/j.comgeo.2011.02.001}.

\bibitem[{Goddard et~al.(1996)Goddard, Katchalski, and Kleitman}]{GKK-EuJC96}
\textsc{Wayne Goddard, Meir Katchalski, and Daniel~J. Kleitman}.
\newblock Forcing disjoint segments in the plane.
\newblock \emph{European J. Combin.}, 17(4):391--395, 1996.
\newblock \doi{10.1006/eujc.1996.0032}.

\bibitem[{Goddyn and Xu(2017)}]{GX17}
\textsc{Luis Goddyn and Yian Xu}.
\newblock On the bounds of {C}onway's thrackles.
\newblock \emph{Discrete Comput. Geom.}, 58(2):410--416, 2017.
\newblock \doi{10.1007/s00454-017-9877-8}.
\newblock \msn{3679942}.

\bibitem[{Golumbic(1980)}]{Golumbic80}
\textsc{Martin~Charles Golumbic}.
\newblock \emph{Algorithmic graph theory and perfect graphs}.
\newblock Academic Press, 1980.

\bibitem[{Graham et~al.(1994)Graham, Knuth, and Patashnik}]{GKP94}
\textsc{Ronald~L. Graham, Donald~E. Knuth, and Oren Patashnik}.
\newblock \emph{Concrete mathematics}.
\newblock Addison-Wesley, 2nd edn., 1994.

\bibitem[{Green and Ringeisen(1995)}]{GR95}
\textsc{J.~E. Green and Richard~D. Ringeisen}.
\newblock Combinatorial drawings and thrackle surfaces.
\newblock In \emph{Graph theory, combinatorics, and algorithms, Vol.\ 1, 2
  (Kalamazoo, MI, 1992)}, pp. 999--1009. Wiley, 1995.

\bibitem[{Gy{\'a}rf{\'a}s(1985)}]{Gyarfas-DM85}
\textsc{Andr{\'a}s Gy{\'a}rf{\'a}s}.
\newblock On the chromatic number of multiple interval graphs and overlap
  graphs.
\newblock \emph{Discrete Math.}, 55(2):161--166, 1985.
\newblock \doi{10.1016/0012-365X(85)90044-5}.

\bibitem[{Gy{\'a}rf{\'a}s(1986)}]{Gyarfas-DM86}
\textsc{Andr{\'a}s Gy{\'a}rf{\'a}s}.
\newblock Corrigendum: ``{O}n the chromatic number of multiple interval graphs
  and overlap graphs''.
\newblock \emph{Discrete Math.}, 62(3):333, 1986.
\newblock \doi{10.1016/0012-365X(86)90224-4}.

\bibitem[{Gy\'{a}rf\'{a}s and Lehel(1985)}]{GL-DM85}
\textsc{Andr\'{a}s Gy\'{a}rf\'{a}s and Jen{\"o} Lehel}.
\newblock Covering and coloring problems for relatives of intervals.
\newblock \emph{Discrete Math.}, 55(2):167--180, 1985.
\newblock \doi{10.1016/0012-365X(85)90045-7}.

\bibitem[{Halton(1991)}]{Halton91}
\textsc{John~H. Halton}.
\newblock On the thickness of graphs of given degree.
\newblock \emph{Inform. Sci.}, 54(3):219--238, 1991.
\newblock \doi{10.1016/0020-0255(91)90052-V}.

\bibitem[{Harary and Schwenk(1972)}]{HS72}
\textsc{Frank Harary and Allen Schwenk}.
\newblock A new crossing number for bipartite graphs.
\newblock \emph{Utilitas Math.}, 1:203--209, 1972.

\bibitem[{Heath et~al.(1992)Heath, Leighton, and Rosenberg}]{HLR92}
\textsc{Lenwood~S. Heath, F.~Thomson Leighton, and Arnold~L. Rosenberg}.
\newblock Comparing queues and stacks as mechanisms for laying out graphs.
\newblock \emph{SIAM J. Discrete Math.}, 5(3):398--412, 1992.
\newblock \doi{10.1137/0405031}.
\newblock \msn{1172748}.

\bibitem[{Heath and Rosenberg(1992)}]{HR92}
\textsc{Lenwood~S. Heath and Arnold~L. Rosenberg}.
\newblock Laying out graphs using queues.
\newblock \emph{SIAM J. Comput.}, 21(5):927--958, 1992.
\newblock \doi{10.1137/0221055}.
\newblock \msn{1181408}.

\bibitem[{Hobbs(1969)}]{Hobbs69}
\textsc{Arthur~M. Hobbs}.
\newblock A survey of thickness.
\newblock In \emph{Recent Progress in Combinatorics (Proc. 3rd Waterloo Conf.
  on Combinatorics, 1968)}, pp. 255--264. Academic Press, 1969.

\bibitem[{Hopf and Pammwitz(1934)}]{HopfPann34}
\textsc{H.~Hopf and E.~Pammwitz}.
\newblock Aufgabe no. 167.
\newblock \emph{Jahresbericht der Deutschen Mathematiker-Vereinigung}, 43,
  1934.

\bibitem[{Hutchinson et~al.(1999)Hutchinson, Shermer, and Vince}]{HSV-CGTA99}
\textsc{Joan~P. Hutchinson, Thomas~C. Shermer, and Andrew Vince}.
\newblock On representations of some thickness-two graphs.
\newblock \emph{Comput. Geom. Theory Appl.}, 13(3):161--171, 1999.
\newblock \doi{10.1016/S0925-7721(99)00018-8}.

\bibitem[{Janson and Uzzell(2017)}]{JU17}
\textsc{Svante Janson and Andrew~J. Uzzell}.
\newblock On string graph limits and the structure of a typical string graph.
\newblock \emph{J. Graph Theory}, 84(4):386--407, 2017.
\newblock \doi{10.1002/jgt.22031}.

\bibitem[{Kainen(1973)}]{Kainen73}
\textsc{Paul~C. Kainen}.
\newblock Thickness and coarseness of graphs.
\newblock \emph{Abh. Math. Sem. Univ. Hamburg}, 39:88--95, 1973.
\newblock \doi{10.1007/BF02992822}.

\bibitem[{Kostochka(2004)}]{Kostochka04}
\textsc{Alexandr Kostochka}.
\newblock Coloring intersection graphs of geometric figures with a given clique
  number.
\newblock In  \cite{TowardsGeometricGraphTheory}, pp. 127--138.

\bibitem[{Kostochka and Kratochv{\'{\i}}l(1997)}]{KK-DM97}
\textsc{Alexandr Kostochka and Jan Kratochv{\'{\i}}l}.
\newblock Covering and coloring polygon-circle graphs.
\newblock \emph{Discrete Math.}, 163(1--3):299--305, 1997.
\newblock \doi{10.1016/S0012-365X(96)00344-5}.

\bibitem[{Kostochka(1988)}]{Kostochka88}
\textsc{Alexandr~V. Kostochka}.
\newblock Upper bounds on the chromatic number of graphs.
\newblock \emph{Trudy Inst. Mat. (Novosibirsk)}, 10:204--226, 1988.

\bibitem[{Kostochka and Ne{\v{s}}et{\v{r}}il(1998)}]{KN-EJC98}
\textsc{Alexandr~V. Kostochka and Jaroslav Ne{\v{s}}et{\v{r}}il}.
\newblock Coloring relatives of intervals on the plane. {I}. {C}hromatic number
  versus girth.
\newblock \emph{European J. Combin.}, 19(1):103--110, 1998.
\newblock \doi{10.1006/eujc.1997.0151}.

\bibitem[{Kostochka and Ne{\v{s}}et{\v{r}}il(2002)}]{KN-EJC02}
\textsc{Alexandr~V. Kostochka and Jaroslav Ne{\v{s}}et{\v{r}}il}.
\newblock Colouring relatives of intervals on the plane. {II}. {I}ntervals and
  rays in two directions.
\newblock \emph{European J. Combin.}, 23(1):37--41, 2002.
\newblock \doi{10.1006/eujc.2000.0433}.

\bibitem[{Kupitz(1984)}]{Kupitz-DM84}
\textsc{Yaakov~S. Kupitz}.
\newblock On pairs of disjoint segments in convex position in the plane.
\newblock \emph{Ann. Discrete Math.}, 20:203--208, 1984.
\newblock \doi{10.1016/S0304-0208(08)72827-5}.

\bibitem[{Kupitz and Perles(1996)}]{KupitzPerles-DCG96}
\textsc{Yaakov~S. Kupitz and Micha~A. Perles}.
\newblock Extremal theory for convex matchings in convex geometric graphs.
\newblock \emph{Discrete Comput. Geom.}, 15(2):195--220, 1996.
\newblock \doi{10.1007/BF02717731}.

\bibitem[{Lov\'{a}sz et~al.(1997)Lov\'{a}sz, Pach, and Szegedy}]{LPS-DCG97}
\textsc{L\'{a}szl\'{o} Lov\'{a}sz, J\'{a}nos Pach, and Mario Szegedy}.
\newblock On {C}onway's thrackle conjecture.
\newblock \emph{Discrete Comput. Geom.}, 18(4):369--376, 1997.
\newblock \doi{10.1007/PL00009322}.

\bibitem[{Lucas(1892)}]{Lucas1892}
\textsc{\'E. Lucas}.
\newblock \emph{R\'ecr\'eations Math\'ematiques}, vol.~2.
\newblock Gauthiers Villars, Paris, 1892.

\bibitem[{Matou{\v{s}}ek(2014)}]{Mat14}
\textsc{Ji{\v{r}}{\'{\i}} Matou{\v{s}}ek}.
\newblock Near-optimal separators in string graphs.
\newblock \emph{Combin. Probab. Comput.}, 23(1):135--139, 2014.
\newblock \doi{10.1017/S0963548313000400}.
\newblock \msn{3197972}.

\bibitem[{Misereh and Nikolayevsky(2018)}]{MN18}
\textsc{Grace Misereh and Yuri Nikolayevsky}.
\newblock Thrackles containing a standard musquash.
\newblock \emph{Australas. J. Combin.}, 70:168--184, 2018.
\newblock \msn{3737105}.

\bibitem[{Miyauchi(2008{\natexlab{a}})}]{Miyauchi08a}
\textsc{Miki Miyauchi}.
\newblock (3,2)-track layout of bipartite graph subdivisions.
\newblock In \textsc{Hiro Ito, Mikio Kano, Naoki Katoh, and Yushi Uno}, eds.,
  \emph{Computational Geometry and Graph Theory}, pp. 127--131.
  Springer-Verlag, 2008{\natexlab{a}}.
\newblock \doi{10.1007/978-3-540-89550-3\_14}.

\bibitem[{Miyauchi(2008{\natexlab{b}})}]{Miyauchi08}
\textsc{Miki Miyauchi}.
\newblock $(d+1,2)$ -track layout of bipartite graph subdivisions.
\newblock \emph{IEICE Trans. Fundam. Electron. Commun. Comput. Sci.},
  E91-A:2292--2295, 2008{\natexlab{b}}.
\newblock \doi{10.1093/ietfec/e91-a.9.2292}.

\bibitem[{Miyauchi(2007)}]{Miyauchi-Track}
\textsc{Miki~Shimabara Miyauchi}.
\newblock Improvement of track layout of bipartite graph subdivisions.
\newblock \emph{IEICE Technical Report. Circuits and systems}, 106(512):7--12,
  2007.
\newblock CAS2006-71.

\bibitem[{Mutzel et~al.(1998)Mutzel, Odenthal, and Scharbrodt}]{MOS98}
\textsc{Petra Mutzel, Thomas Odenthal, and Mark Scharbrodt}.
\newblock The thickness of graphs: a survey.
\newblock \emph{Graphs Combin.}, 14(1):59--73, 1998.
\newblock \doi{10.1007/PL00007219}.
\newblock \msn{1617664}.

\bibitem[{Nash-Williams(1964)}]{NW-JLMS64}
\textsc{Crispin St. J.~A. Nash-Williams}.
\newblock Decomposition of finite graphs into forests.
\newblock \emph{J. London Math. Soc.}, 39:12, 1964.
\newblock \doi{10.1112/jlms/s1-39.1.12}.

\bibitem[{Pach(2004)}]{TowardsGeometricGraphTheory}
\textsc{J\'{a}nos Pach}, ed.
\newblock \emph{Towards a Theory of Geometric Graphs}, vol. 342 of
  \emph{Contemporary Mathematics}.
\newblock Amer. Math. Soc., 2004.

\bibitem[{Pach and Agarwal(1995)}]{PA95}
\textsc{J{\'a}nos Pach and Pankaj~K. Agarwal}.
\newblock \emph{Combinatorial geometry}.
\newblock John Wiley, 1995.

\bibitem[{Pach et~al.(2012)Pach, Radoi{\v{c}}i{\'c}, and T{\'o}th}]{PRT12}
\textsc{J{\'a}nos Pach, Rado{\v{s}} Radoi{\v{c}}i{\'c}, and G{\'e}za T{\'o}th}.
\newblock Tangled thrackles.
\newblock \emph{Geombinatorics}, 21(4):157--169, 2012.

\bibitem[{Pach and Sterling(2011)}]{PS11}
\textsc{J\'{a}nos Pach and Ethan Sterling}.
\newblock Conway's conjecture for monotone thrackles.
\newblock \emph{Amer. Math. Monthly}, 118(6):544--548, 2011.
\newblock \doi{10.4169/amer.math.monthly.118.06.544}.

\bibitem[{Pach and Wenger(2001)}]{PW-GC01}
\textsc{J\'{a}nos Pach and Rephael Wenger}.
\newblock Embedding planar graphs at fixed vertex locations.
\newblock \emph{Graphs Combin.}, 17(4):717--728, 2001.
\newblock \doi{10.1007/PL00007258}.

\bibitem[{Pawlik et~al.(2013)Pawlik, Kozik, Krawczyk, Laso{\'n}, Micek,
  Trotter, and Walczak}]{Pawlik13}
\textsc{Arkadiusz Pawlik, Jakub Kozik, Tomasz Krawczyk, Micha{\l} Laso{\'n},
  Piotr Micek, William~T. Trotter, and Bartosz Walczak}.
\newblock Triangle-free geometric intersection graphs with large chromatic
  number.
\newblock \emph{Discrete Comput. Geom.}, 50(3):714--726, 2013.
\newblock \doi{10.1007/s00454-013-9534-9}.

\bibitem[{Pawlik et~al.(2014)Pawlik, Kozik, Krawczyk, Laso{\'n}, Micek,
  Trotter, and Walczak}]{Pawlik14}
\textsc{Arkadiusz Pawlik, Jakub Kozik, Tomasz Krawczyk, Micha{\l} Laso{\'n},
  Piotr Micek, William~T. Trotter, and Bartosz Walczak}.
\newblock Triangle-free intersection graphs of line segments with large
  chromatic number.
\newblock \emph{J. Combin. Theory Ser. B}, 105:6--10, 2014.
\newblock \doi{10.1016/j.jctb.2013.11.001}.

\bibitem[{Perlstein and Pinchasi(2008)}]{PP08}
\textsc{Amitai Perlstein and Rom Pinchasi}.
\newblock Generalized thrackles and geometric graphs in {$\Bbb R^3$} with no
  pair of strongly avoiding edges.
\newblock \emph{Graphs Combin.}, 24(4):373--389, 2008.
\newblock \doi{10.1007/s00373-008-0796-6}.

\bibitem[{Piazza et~al.(1994)Piazza, Ringeisen, and Stueckle}]{PRS-DM94}
\textsc{Barry~L. Piazza, Richard~D. Ringeisen, and Sam~K. Stueckle}.
\newblock Subthrackleable graphs and four cycles.
\newblock \emph{Discrete Math.}, 127(1--3):265--276, 1994.
\newblock \doi{10.1016/0012-365X(92)00484-9}.

\bibitem[{Reed(1997)}]{Reed97}
\textsc{Bruce~A. Reed}.
\newblock Tree width and tangles: a new connectivity measure and some
  applications.
\newblock In \emph{Surveys in combinatorics}, vol. 241 of \emph{London Math.
  Soc. Lecture Note Ser.}, pp. 87--162. Cambridge Univ. Press, 1997.
\newblock \doi{10.1017/CBO9780511662119.006}.
\newblock \msn{1477746}.

\bibitem[{Rengarajan and Veni~Madhavan(1995)}]{RM-COCOON95}
\textsc{S.~Rengarajan and C.~E. Veni~Madhavan}.
\newblock Stack and queue number of $2$-trees.
\newblock In \textsc{Ding-Zhu Du and Ming Li}, eds., \emph{Proc. 1st Annual
  International Conf. on Computing and Combinatorics (COCOON '95)}, vol. 959 of
  \emph{Lecture Notes in Comput. Sci.}, pp. 203--212. Springer, 1995.
\newblock \doi{10.1007/BFb0030834}.

\bibitem[{Ruiz-Vargas et~al.(2016)Ruiz-Vargas, Suk, and T\'oth}]{RST16}
\textsc{Andres~J. Ruiz-Vargas, Andrew Suk, and Csaba~D. T\'oth}.
\newblock Disjoint edges in topological graphs and the tangled-thrackle
  conjecture.
\newblock \emph{European J. Combin.}, 51:398--406, 2016.
\newblock \doi{10.1016/j.ejc.2015.07.004}.

\bibitem[{Schaefer and {\v{S}}tefankovi{\v{c}}(2004)}]{SS-JCSS04}
\textsc{Marcus Schaefer and Daniel {\v{S}}tefankovi{\v{c}}}.
\newblock Decidability of string graphs.
\newblock \emph{J. Comput. System Sci.}, 68(2):319--334, 2004.
\newblock \doi{10.1016/j.jcss.2003.07.002}.

\bibitem[{Schnyder(1989)}]{Schnyder-Order89}
\textsc{Walter Schnyder}.
\newblock Planar graphs and poset dimension.
\newblock \emph{Order}, 5(4):323--343, 1989.
\newblock \doi{10.1007/BF00353652}.

\bibitem[{T\'{o}th(2000)}]{Toth-JCTA00}
\textsc{G\'{e}za T\'{o}th}.
\newblock Note on geometric graphs.
\newblock \emph{J. Combin. Theory Ser. A}, 89(1):126--132, 2000.
\newblock \doi{10.1006/jcta.1999.3001}.

\bibitem[{T\'{o}th and Valtr(1999)}]{TV-DCG99}
\textsc{G\'{e}za T\'{o}th and Pavel Valtr}.
\newblock Geometric graphs with few disjoint edges.
\newblock \emph{Discrete Comput. Geom.}, 22(4):633--642, 1999.
\newblock \doi{10.1007/PL00009482}.

\bibitem[{Wagner(1936)}]{Wagner36}
\textsc{Klaus Wagner}.
\newblock Bemerkung zum {V}ierfarbenproblem.
\newblock \emph{Jber. Deutsch. Math.-Verein.}, 46:26--32, 1936.
\newblock
  \urlprefix\url{http://resolver.sub.uni-goettingen.de/purl?GDZPPN002131633}.

\bibitem[{Wiechert(2017)}]{Wiechert17}
\textsc{Veit Wiechert}.
\newblock On the queue-number of graphs with bounded tree-width.
\newblock \emph{Electron. J. Combin.}, 24(1):1.65, 2017.

\bibitem[{Woodall(1971)}]{Woodall-Thrackles}
\textsc{Douglas~R. Woodall}.
\newblock Thrackles and deadlock.
\newblock In \emph{Combinatorial Mathematics and its Applications (Proc. Conf.,
  Oxford, 1969)}, pp. 335--347. Academic Press, London, 1971.

\bibitem[{Yannakakis(1989)}]{Yannakakis89}
\textsc{Mihalis Yannakakis}.
\newblock Embedding planar graphs in four pages.
\newblock \emph{J. Comput. System Sci.}, 38(1):36--67, 1989.
\newblock \doi{10.1016/0022-0000(89)90032-9}.
\newblock \msn{0990049}.

\end{thebibliography}

\def\soft#1{\leavevmode\setbox0=\hbox{h}\dimen7=\ht0\advance \dimen7
  by-1ex\relax\if t#1\relax\rlap{\raise.6\dimen7
  \hbox{\kern.3ex\char'47}}#1\relax\else\if T#1\relax
  \rlap{\raise.5\dimen7\hbox{\kern1.3ex\char'47}}#1\relax \else\if
  d#1\relax\rlap{\raise.5\dimen7\hbox{\kern.9ex \char'47}}#1\relax\else\if
  D#1\relax\rlap{\raise.5\dimen7 \hbox{\kern1.4ex\char'47}}#1\relax\else\if
  l#1\relax \rlap{\raise.5\dimen7\hbox{\kern.4ex\char'47}}#1\relax \else\if
  L#1\relax\rlap{\raise.5\dimen7\hbox{\kern.7ex
  \char'47}}#1\relax\else\message{accent \string\soft \space #1 not
  defined!}#1\relax\fi\fi\fi\fi\fi\fi}

\appendix
\section{Proof of \thmref{ConvexCompleteAntithickness}}

\begin{proof}[Proof of \thmref{ConvexCompleteAntithickness} Lower Bound]
  Say $n$ is the minimum counterexample. That is,
  $\ctn{K_n}<\frac{3n-6}{4}$, but $\ctn{K_m}\geq\frac{3m-6}{4}$ for
  all $m<n$. It is easily seen that $n\geq5$. Consider a convex
  drawing of $K_n$ with its edges coloured with $\ctn{K_n}$ colours,
  such that monochromatic edges intersect. Consider four consecutive
  vertices $1,2,3,4$ on the convex hull. Say the edges $12$, $23$,
  $34$ are respectively coloured $a,b,c$. Since $12$ and $34$ do not
  intersect, $a\ne c$. Suppose to the contrary that $a\ne b$ and $b\ne
  c$. Then $a,b,c$ are distinct colours. Every edge coloured $a$, $b$
  or $c$ is incident to $1$, $2$, $3$ or $4$. Thus
  $K_n-\{1,2,3,4\}$ receives at most $\ctn{K_n}-3$
  colours. Hence $$\frac{3n-18}{4}=\frac{3(n-4)-6}{4}\leq\ctn{K_{n-4}}\leq\ctn{K_n}-3<\frac{3n-6}{4}-3=\frac{3n-18}{4}.$$
  This contradiction proves that $a=b$ or $b=c$. Hence, as illustrated
  in \figref{LowerBound}, the sequence of colours on the boundary
  edges\footnote{ An edge contained in the convex hull of a convex
    drawing is a \emph{boundary} edge; otherwise it is
    \emph{internal}.} of $K_n$ is $a,a,b,b,c,c,\ldots$. In particular,
  there are $\frac{n}{2}$ colours on the boundary edges (and $n$ is
  even). Let $S$ be the set of vertices for which the two incident
  boundary edges are monochromatic. Then $|S|=\frac{n}{2}$. An
  internal edge of $K_n- S$ cannot receive a colour that is
  assigned to a boundary edge of $K_n$. There are
  $\FLOOR{\half(\frac{n}{2}-2)}$ pairwise disjoint internal edges of
  $K_n- S$. Thus the number of colours is at least
  $\frac{n}{2}+\FLOOR{\half(\frac{n}{2}-2)}\geq\frac{n}{2}+\half(\frac{n}{2}-3)=\frac{3n-6}{4}$,
  as desired. \end{proof}

\Figure{LowerBound}{\includegraphics{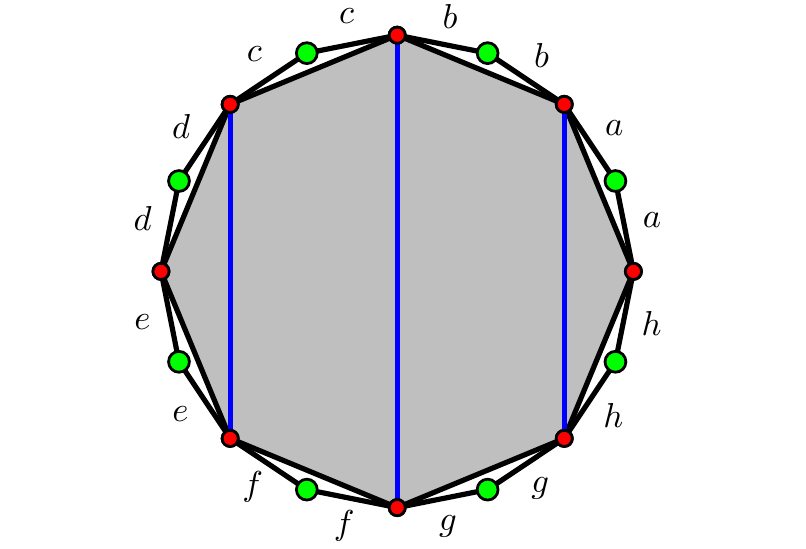}}{Illustration for the
  proof of the lower bound in \thmref{ConvexCompleteAntithickness}.}

%

We prove the upper bound in \thmref{ConvexCompleteAntithickness} by a
series of lemmas. To facilitate an inductive argument, it will be
beneficial to prove an upper bound for a broader class of convex
graphs than just $K_n$. Let $(0,1,\dots,{n-1})$ be the vertices in
clockwise order around a convex $n$-gon. The \emph{distance} between
vertices $i$ and $j$, denoted by $\dist(i,j)$, is the length of the
shortest path between $i$ and $j$ around the boundary. That is,
$\dist(i,j)=\min\{|j-i|,n-|j-i|\}$. The \emph{distance} of an edge is
the distance between its end-vertices. Let $G(n,\ell)$ be the convex
graph on $n$ vertices, where $ij$ is an edge if and only if
$\dist(i,j)\geq\ell$. Since every distance is at most
$\floor{\frac{n}{2}}$, the only interesting case is
$\ell\in[1,\floor{\frac{n}{2}}]$. Observe that $K_n=G(n,1)$. We
proceed by upward induction on $\ell$ and downward induction on
$n$. For the base case, observe that $G(n,\floor{\tfrac{n}{2}})$ is a
thrackled perfect matching or odd cycle; thus
$\ctn{G(n,\floor{\tfrac{n}{2}})}=1$.
In general, we have the following recursive construction.

\begin{lem}
  \lemlabel{ConvexConstruction}
  For all integers $n\geq1$ and $\ell\geq1$,
  \begin{equation*}
    \ctn{G(n,\ell)}\;\leq\;
    \CEIL{\frac{n}{\ell+1}}+\ctn{G(\FLOOR{\frac{\ell n}{\ell+1}},\ell+1)}\enspace.
  \end{equation*}
\end{lem}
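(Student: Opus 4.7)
The plan is to construct $k := \lceil n/(\ell+1) \rceil$ thrackles covering every edge of $G(n,\ell)$ except for a residual subgraph isomorphic to $G(m,\ell+1)$ with $m := \lfloor \ell n/(\ell+1)\rfloor = n - k$. The claimed bound then follows by partitioning the residual into $\ctn{G(m,\ell+1)}$ further thrackles.

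Choose $S \subseteq V(G(n,\ell))$ of size $k$, with consecutive elements roughly $\ell+1$ apart around the convex hull; for instance $S := \{v_0, v_{\ell+1}, v_{2(\ell+1)}, \dots\}$, with the last gap possibly smaller when $(\ell+1)\nmid n$. The remaining $m$ vertices of $V \setminus S$ inherit a natural $m$-cycle structure in their induced cyclic order. For each $s \in S$, form a thrackle $T_s$ consisting of the star at $s$ (every edge of $G(n,\ell)$ incident to $s$) together with all ``extra'' edges $uv$ with $u, v \in V \setminus S$ for which $s$ lies on the short arc from $u$ to $v$ and is within original distance $\ell$ of both $u$ and $v$. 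Each extra is assigned uniquely to the compatible $s$.

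Verifying that each $T_s$ is a thrackle splits into three routine checks. Stars are trivially thrackles. An extra $uv \in T_s$ crosses every star edge $sx$, because the compatibility condition forces every $x \in N(s)$ to lie on the opposite side of the chord $uv$ from $s$ unless $x \in \{u,v\}$. Finally, any two extras in the same $T_s$ have all four endpoints within original distance $\ell$ of $s$, with $s$ strictly separating each endpoint pair along its short arc, so a brief case analysis on the relative positions of the four points around $s$ yields a shared endpoint or a crossing. The edges not in any $T_s$ are precisely those $uv$ with $u,v \in V \setminus S$ whose new distance in the $m$-cycle inherited by $V \setminus S$ is at least $\ell+1$, and these constitute a convex drawing of $G(m,\ell+1)$.

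The main technical obstacle is showing that every ``extra'' edge (one with $u,v \in V \setminus S$, original distance $\geq \ell$, and new distance $\leq \ell$) indeed admits a compatible $s \in S$ on its short arc. Using the defining inequality $d_{\text{orig}} - r \leq \ell$ for extras (where $r$ counts the $S$-vertices on the short arc) together with the near-regular spacing of $S$, a brief count forces such extras to have original distance at most $2\ell$, after which an $S$-vertex in the compatibility window on the short arc is guaranteed. A small additional check handles the lone smaller gap that occurs when $(\ell+1) \nmid n$, but introduces no new ideas.
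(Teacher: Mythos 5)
Your overall strategy matches the paper's (choose $S$ of size $\CEIL{\frac{n}{\ell+1}}$ spread around the polygon, build one thrackle per $s\in S$ from its star plus some short edges, and observe that the uncovered edges form a convex subgraph of $G(\FLOOR{\frac{\ell n}{\ell+1}},\ell+1)$), but the key verification step fails. Your compatibility condition ($s$ on the short arc of $uv$ and within distance $\ell$ of both $u$ and $v$) permits two extras in the same class $T_s$ to be \emph{nested}, in which case they neither cross nor share an endvertex. Concretely, take $\ell=5$, $n=21$, $S=\{0,6,12,18\}$ (your spacing, with short last gap $\{19,20\}$). The edges $\{16,2\}$ (distance $7$, new distance $5$) and $\{17,1\}$ (distance $5$, new distance $3$) are both extras; both have the vertices $18$ and $0$ of $S$ on their short arcs, and all four endpoints are within distance $5$ of $18$ (and also of $0$); yet $\{17,1\}$ is nested inside $\{16,2\}$. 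Nothing in your ``assign each extra uniquely to a compatible $s$'' rule prevents both from landing in $T_{18}$, and then $T_{18}$ is not a thrackle; in particular the asserted ``brief case analysis'' for two same-class extras is false as stated. The paper is careful at exactly this point: the non-star edges receiving the colour of $s$ are only those of distance $\ell$ or $\ell+1$ whose short arc passes through $s$. Two such chords cannot nest, since nesting forces their lengths to differ by at least $2$, so they must cross or share an endvertex. Your window of chord lengths from $\ell$ up to roughly $2\ell$ destroys this, and repairing it essentially forces you back to the paper's restriction (or to a nontrivial argument that a clever choice among multiple compatible $s$ splits all nested pairs).

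The ``small additional check'' at the lone short gap is also not small. If that gap contains exactly $\ell-1$ vertices of $V\setminus S$ (which happens for suitable $n \bmod (\ell+1)$, e.g.\ $\ell=5$, $n=23$, $S=\{0,6,12,18\}$), then the edge joining the vertex just before $s_1=18$ to the vertex just after $s_2=0$ has distance $\ell+2$ and new distance exactly $\ell$: it is an extra, but it lies at distance $\ell+1$ from both $s_1$ and $s_2$, so it has \emph{no} compatible $s$ at all. It is then neither placed in any $T_s$ nor an edge of the residual $G(m,\ell+1)$, so the count $\CEIL{\frac{n}{\ell+1}}+\ctn{G(\FLOOR{\frac{\ell n}{\ell+1}},\ell+1)}$ does not cover it. This is precisely the delicate boundary situation that the choice of which short edges accompany each $s$ (and the paper's more balanced spacing, with $\ell-1$ or $\ell$ non-$S$ vertices in every gap, rather than one very short gap) is meant to handle, and your proposal does not resolve it.
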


\begin{proof} The lemma is vacuous for all
  $\ell\geq\floor{\frac{n}{2}}+1$. Now assume that
  $\ell\in[1,\floor{\frac{n}{2}}]$. Let $S$ be a set of $\ceil{\frac{n}{\ell+1}}$ vertices with $\ell-1$ or $\ell$ vertices
  not in $S$ between each pair of consecutive vertices in $S$. Observe
  that $|S|\geq2$. With each vertex $v\in S$, associate a distinct
  colour $c_v$. Assign the colour $c_v$ to all edges incident to $v$,
  and to all edges $xy$ of distance $\ell$ or $\ell+1$, such that a
  shortest path between $x$ and $y$ around the boundary passes through
  $v$. Observe that the edges coloured $c_v$ form a thrackle, as
  illustrated in \figref{ConvexConstruction}(a). Moreover, every edge
  of $G(n,\ell)$ that is incident to a vertex in $S$ is coloured, and
  every edge of distance at most $\ell+1$ in $G(n,\ell)$ is coloured,
  as illustrated in \figref{ConvexConstruction}(b). The number of
  vertices incident to an uncoloured edge is
  $n-|S|=n-\ceil{\frac{n}{\ell+1}}=\floor{\frac{\ell
      n}{\ell+1}}$. Every uncoloured edge has distance at least
  $\ell+2$. An uncoloured with distance $\ell+t$ spans at most $t-1$
  vertices in $S$. Thus after deleting $S$, the uncoloured edges form
  $G(\floor{\frac{\ell n}{\ell+1}},\ell+1)$, as illustrated in
  \figref{ConvexConstruction}(c).
\end{proof}


\Figure{ConvexConstruction}{\includegraphics{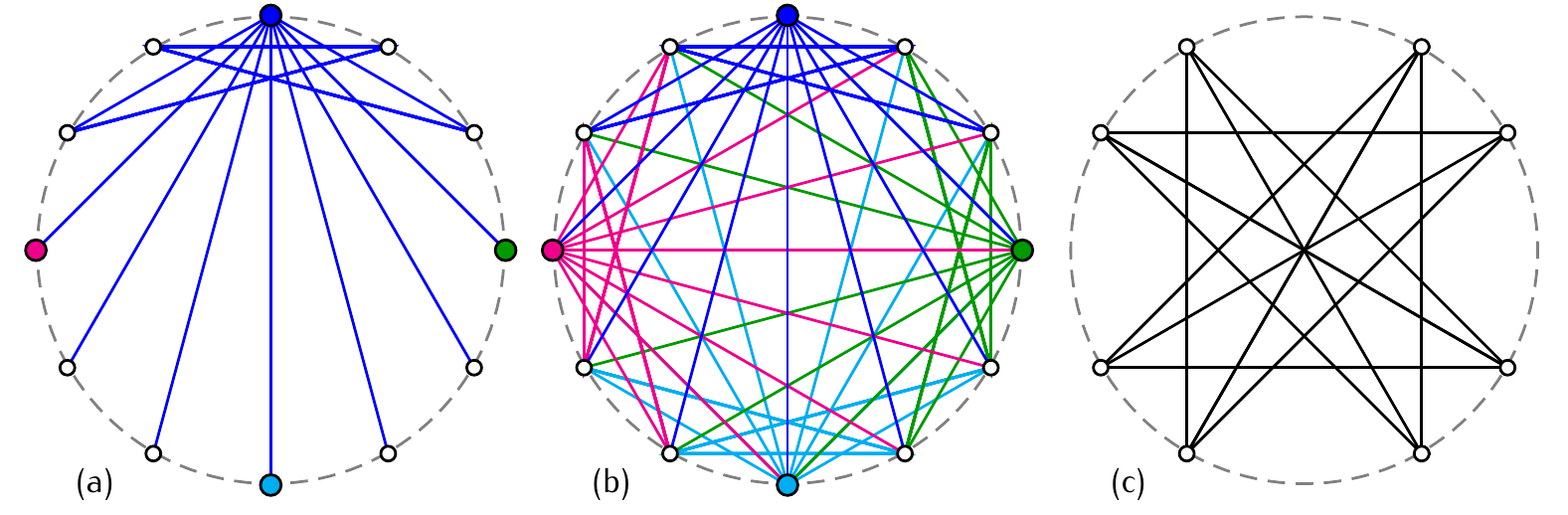}}{Colouring
  $G(12,2)$. (a) one thrackle, (b) four thrackles, (c) uncoloured
  $G(8,3)$.}


Recall that $H(n):=\sum_{i=1}^n\frac{1}{i} \geq \ln n+\gamma$. 

\begin{lem}
  \lemlabel{ConvexInduction}
  For all integers $n\geq\ell\geq1$,
  \begin{equation*}
    \ctn{K_n}\;\leq\;n-\frac{n}{\ell}+(\ell-H(\ell))
    +\ctn{G(\FLOOR{\frac{n}{\ell}},\ell)}\enspace.
  \end{equation*}
\end{lem}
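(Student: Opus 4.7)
The plan is to prove the lemma by induction on $\ell$. The base case $\ell=1$ is immediate: the inequality reads $\ctn{K_n}\leq 0+0+\ctn{K_n}$.

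For the inductive step, assume the inequality holds for some $\ell\geq 1$ and apply \lemref{ConvexConstruction} to $G(\FLOOR{n/\ell},\ell)$ to obtain
\[
\ctn{G(\FLOOR{n/\ell},\ell)} \;\leq\; \CEIL{\FLOOR{n/\ell}/(\ell+1)} + \ctn{G(p,\ell+1)},
\]
where $p:=\FLOOR{\ell\FLOOR{n/\ell}/(\ell+1)}$. Clearly $p\leq n/(\ell+1)$, so $p\leq\FLOOR{n/(\ell+1)}$. A direct \emph{monotonicity} argument then establishes $\ctn{G(p,\ell+1)}\leq\ctn{G(\FLOOR{n/(\ell+1)},\ell+1)}$: place the vertices of $G(p,\ell+1)$ at consecutive positions $0,1,\dots,p-1$ of a convex drawing of $G(\FLOOR{n/(\ell+1)},\ell+1)$, and verify, via a short case analysis on $|j-i|$ relative to $p/2$, that each edge of $G(p,\ell+1)$ maps to an edge of $G(\FLOOR{n/(\ell+1)},\ell+1)$; restricting the given thrackle partition then proves the bound.

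The key arithmetic input is the identity $\CEIL{a/b}=a-\FLOOR{(b-1)a/b}$, valid for positive integers $a,b$ (because $\lceil a-x\rceil=a-\lfloor x\rfloor$ when $a$ is an integer). Applied with $a=\FLOOR{n/\ell}$ and $b=\ell+1$, it says $\CEIL{\FLOOR{n/\ell}/(\ell+1)}=\FLOOR{n/\ell}-p$. Since $\FLOOR{n/\ell}/(\ell+1)$ has denominator $\ell+1$, the ceiling operation adds at most $\ell/(\ell+1)$, yielding
\[
\CEIL{\FLOOR{n/\ell}/(\ell+1)} \;\leq\; \FLOOR{n/\ell}/(\ell+1)+\ell/(\ell+1) \;\leq\; n/(\ell(\ell+1))+\ell/(\ell+1).
\]

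The crux is to recognise that this right-hand side rearranges precisely into the telescoping increments needed: $n/(\ell(\ell+1))=n/\ell-n/(\ell+1)$, and $\ell/(\ell+1)=1-1/(\ell+1)=\bracket{(\ell+1)-H(\ell+1)}-\bracket{\ell-H(\ell)}$. Substituting back into the induction hypothesis delivers the lemma with $\ell$ replaced by $\ell+1$. The main obstacle is this last matching step: the ceiling slack $\ell/(\ell+1)$ must absorb exactly the harmonic increment $H(\ell+1)-H(\ell)=1/(\ell+1)$, which is precisely why $H(\ell)$ (rather than a linear term) appears in the statement.
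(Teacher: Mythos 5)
Your proof is correct and follows essentially the same route as the paper: induction on $\ell$, one application of \lemref{ConvexConstruction}, the ceiling bound $\CEIL{x}\leq x+\ell/(\ell+1)$, and the telescoping identities $n/(\ell(\ell+1))=n/\ell-n/(\ell+1)$ and $\ell/(\ell+1)=1-(H(\ell+1)-H(\ell))$. The only (welcome) difference is that you make explicit the monotonicity step $\ctn{G(p,\ell+1)}\leq\ctn{G(\FLOOR{n/(\ell+1)},\ell+1)}$, which the paper uses implicitly when it replaces $\FLOOR{(\ell-1)\FLOOR{n/(\ell-1)}/\ell}$ by $\FLOOR{n/\ell}$.
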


\begin{proof}
  We proceed by induction on $\ell$. For $\ell=1$, the lemma claims
  that $\ctn{K_n}\leq\ctn{G(n,1)}$, which holds with equality.
  Suppose that $\ell\geq2$ and the lemma holds for $\ell-1$.  That is,
  \begin{equation*}
    \ctn{K_n}\;\leq\;
    n-\frac{n}{\ell-1}+(\ell-1-H(\ell-1))
    +\ctn{G(\FLOOR{\frac{n}{\ell-1}},\ell-1)}\enspace.
  \end{equation*}
  By \lemref{ConvexConstruction} applied to
  $G(\floor{\frac{n}{\ell-1}},\ell-1)$ we have
  \begin{align*}
    \ctn{K_n} &\;\leq\; n-\frac{n}{\ell-1}+(\ell-1-H(\ell-1))
    +\CEIL{\FLOOR{\frac{n}{\ell-1}}/\ell}
    +\ctn{G(\FLOOR{(\ell-1)\FLOOR{\frac{n}{\ell-1}}/\ell},\ell)}\\
    &\;\leq\; n-\frac{n}{\ell-1}+\frac{n}{\ell(\ell-1)}
    +\bracket{\ell-1-H(\ell-1)+\frac{\ell-1}{\ell}}
    +\ctn{G(\FLOOR{\frac{n}{\ell}},\ell)}\\
    &\;=\; n-\frac{n}{\ell} +\bracket{\ell-H(\ell)}
    +\ctn{G(\FLOOR{\frac{n}{\ell}},\ell)}\enspace.\qedhere
  \end{align*}
\end{proof}

\begin{proof}[Proof of \thmref{ConvexCompleteAntithickness} Upper Bound]
  Apply \lemref{ConvexInduction} with
  $\ell=\ceil{\sqrt{\frac{n}{2}}}$. Observe that
  $\ell\geq\half\floor{\frac{n}{\ell}}$. Thus
  $\ctn{G(\FLOOR{\frac{n}{\ell}},\ell)}\leq 1$, and
  \begin{align*}
    \ctn{K_n} &\;\leq\; n-\frac{n}{\ceil{\sqrt{\tfrac{n}{2}}}}
    +\CEIL{\sqrt{\tfrac{n}{2}}}- H\left(\CEIL{\sqrt{\tfrac{n}{2}}}\right) +1\\
    &\;<\;
    n- \left(\sqrt{2n}-2 \right)+ \left(\sqrt{\tfrac{n}{2}}+1\right) - H\left(\CEIL{\sqrt{\tfrac{n}{2}}}\right)+1\\
    &\;\leq\;
    n-\sqrt{\tfrac{n}{2}} - \left(\ln\CEIL{\sqrt{\tfrac{n}{2}}} + \gamma\right)+4\\
    &\;\leq\;
    n-\sqrt{\tfrac{n}{2}}- \left(\half\ln(\tfrac{n}{2}\right)+\gamma)+4\\
    &\;<\; n-\sqrt{\tfrac{n}{2}}-\half\ln n+4 \enspace.\qedhere
  \end{align*}
\end{proof}

\end{document}